\def\today{${\scriptscriptstyle\number\day-\number\month-\number\year}$}
\newtheorem{theorem}{Theorem}[section]
\newtheorem{lemma}[theorem]{Lemma}
\theoremstyle{definition}
\newtheorem{notation}[theorem]{Notation}
\newtheorem{remark}[theorem]{Remark}
\def\address#1{{\center{#1}}}
\date{}
\def\m@th{\mathsurround=0pt}
\def\eqal#1{\null\,\vcenter{\openup\jot\m@th
 \ialign{\strut\hfil$\displaystyle{##}$&&$\displaystyle{{}##}$\hfil
 \crcr#1\crcr}}\,}
\def\matrix#1{\null\,\vcenter{\normalbaselines\m@th
 \ialign{\hfil$##$\hfil&&\quad\hfil$##$\hfil\crcr
 \mathstrut\crcr\noalign{\kern-\baselineskip}
 #1\crcr\mathstrut\crcr\noalign{\kern-\baselineskip}}}\,}
\def\N{{\mathbb N}}
\def\R{{\mathbb R}}
\def\divv{{\rm div}\,}
\def\rot{{\rm rot}\,}
\def\diagin{\hbox{---}\hskip-13.5pt\intop}
\def\diagint{\hbox{--}\hskip-8.2pt\intop}
\def\diagintop{\mathop{\mathchoice
{{\diagin}}%
{{\diagint}}%
{{\diagint}}%
{{\diagint}}%
}\limits}
\numberwithin{equation}{section}
\title{On Ladyzhenskaya-Serrin condition sufficient for regular solutions to the Navier-Stokes equations. Periodic case}
\author{Wojciech M. Zaj\c{a}czkowski}
\begin{document}
\input amssym.def
\input amssym.tex
\maketitle
\thispagestyle{fancy}

\address{Institute of Mathematics, Polish Academy of Sciences,\\
\'Sniadeckich 8, 00-656 Warsaw, Poland\\
e-mail:wz@impan.pl\\
Institute of Mathematics and Cryptology, 
Cybernetics Faculty, \\
Military University of Technology,\\
S. Kaliskiego 2, 00-908 Warsaw, Poland\\}

\begin{abstract}
We consider the Navier-Stokes equations in a bounded domain with periodic boundary conditions. Let $V=V(x,t)$ be the velocity of the fluid. The aim of this paper is to prove the bound $\|V(t)\|_{H^1}\le c$ for any $t\in\R_+$, where $c$ depends on data. The proof is divided into two steps. In the first step the Lam\'e system with a special version of the convective term is considered. The system has two viscosities. Assuming that the second viscosity (the bulk one) is sufficiently large we are able to prove the existence of global regular solutions to this system. The proof is divided into two steps. First the long time existence in interval $(0,T)$ is proved, where $T$ is proportional to the bulk viscosity. Having the bulk viscosity large we are able to show that data at time $T$ are sufficiently small. Then by the small data arguments a global existence follows. In this paper we are restricted to derive appropriate estimates only. To prove the existence we should use the method of successive approximations and the continuation argument. Let $v$ be a solution to it. In the second step we consider a problem for $u=v-V$. Assuming that $\|u\|_{H^1}$ at $t=0$ is sufficiently small we show that $\|u(t)\|_{H^1}$ is also sufficiently small for any $t\in\R_+$.\\
Estimates for $v$ and $u$ in $H^1$ imply estimate for $\|V(t)\|_{H^1}$ for any $t\in\R_+$.
\end{abstract}

\noindent
Key Words: Navier-Stokes equations, periodic boundary conditions, stability arguments, Ladyzhenskaya-Serrin condition

\noindent
MSC 2010: 35Q30, 35B35, 35D35, 35K40, 76D03, 76D05

\section{Introduction}\label{s1}

We consider the Navier-Stokes equations with the periodic boundary conditions
\begin{equation}\eqal{
&V_t+V\cdot\nabla V-\mu\Delta V+\nabla P=0,\cr
&\divv V=0,\cr
&V|_{t=0}=V_0,\cr}
\label{1.1}
\end{equation}
where $V=(V_1(x,t),V_2(x,t),V_3(x,t))\in\R^3$ is the velocity of the fluid, $x=(x_1,x_2,x_3)$ are the Cartesian coordinates, $P=P(x,t)\in\R$ is the pressure, $\mu>0$ is the viscosity coefficient. We denote the considered domain by $\Omega$.

Our aim is to prove such a priori estimate for solutions to (\ref{1.1}) which by the Ladyzhenskaya-Serrin condition the regularity of the weak solutions to (\ref{1.1}) follows.

Since we are not able to find the estimate directly for solutions to system (\ref{1.1}) we proceed as follows. Let $V_0\in H^1$. Then we take a function $v_0$ from a sufficiently small neighborhood of $V_0$ in $H^1$. Next, we assume that $v_0$ are initial data for solutions to the following problem with the periodic boundary conditions
\begin{equation}\eqal{
&v_t+v_H\cdot\nabla v-\mu\Delta v-\nu\nabla\divv v=0,\cr
&v|_{t=0}=v_0,\cr}
\label{1.2}
\end{equation}
where $v_H$ is the divergence free part of $v$ derived by the Helmholtz decomposition.

\noindent
Moreover, $\mu$, $\nu$ are positive constant coefficients. We have to emphasize that $\mu$ in (\ref{1.1}) and in (\ref{1.2}) are the same.

However, $v_0$ belongs to some neighborhood of $V_0$ in $H^1$ we shall need that it is more regular. The regularity will be derived later.

We have to emphasize that system (\ref{1.2}) appeared from discussion between the author and Piotr Mucha which we had during our stay in Prague Nov. 19--23, 2019. We have to mention that system (\ref{1.2}) replaces a system of weakly compressible Navier-Stokes equations (density is close to a constant, the second viscosity is sufficiently large so the gradient part of velocity is sufficienty small) named by WCNSE and egzamined in \cite{Z1, Z3, Z5}. The WCNSE system was used in \cite{Z2, Z4, Z6} to prove regularity of the weak solutions to the Navier-Stokes equations. WCNSE is a physical system.

\noindent
The proofs of existence of global regular solutions to WCNSE in \cite{Z1, Z3, Z5} are very complicated. However, system (\ref{1.2}) does not have a physical meaning it strongly simplifies considerations in \cite{Z1}--\cite{Z6}.

The crucial point of estimates in \cite{Z1, Z3, Z5} is the estimate for velocity $v$ in $L_\infty(0,T;L_6(\Omega))$. Since we have three different proofs the three different papers are written.

To simplify considerations we assume that
\begin{equation}
\intop_\Omega v_0dx=0.
\label{1.3}
\end{equation}
Periodic boundary conditions in (\ref{1.2}) and (\ref{1.3}) imply that
\begin{equation}
\intop_\Omega vdx=0.
\label{1.4}
\end{equation}
Moreover, (\ref{1.3}) and definition of $v_0$ give the restriction
\begin{equation}
\intop_\Omega V_0dx=0.
\label{1.5}
\end{equation}
In view of (\ref{1.4}) and the structure of system (\ref{1.2}) we are looking for solutions to (\ref{1.2}) in the form
\begin{equation}
v=\nabla\varphi+\rot\psi,
\label{1.6}
\end{equation}
where $\varphi$, $\psi$ are periodic functions. In this case $v_H=\rot\psi$ and problem (\ref{1.2}) takes the form
\begin{equation}\eqal{
&v_t+\rot\psi\cdot\nabla v-\mu\Delta v-\nu\nabla\divv v=0\cr
&v|_{t=0}=v_0.\cr}
\label{1.7}
\end{equation}
Hence, problem (\ref{1.7}) is treated as an auxiliary problem which helps us to prove regularity of solutions to the Navier-Stokes equations.

To derive a global a priori estimate guaranteeing the existence of regular solutions to problem (\ref{1.1}) we have to linearize the Navier-Stokes equations. For this purpose we use solutions to problem (\ref{1.7}) and derive a problem for the difference
\begin{equation}
u=v-V
\label{1.8}
\end{equation}
which has the following form
\begin{equation}\eqal{
&u_t+\rot\psi\cdot\nabla u+(u-\nabla\varphi)\cdot\nabla(v-u)-\mu\Delta u\cr
&\quad-\nu\nabla\divv u-\nabla P=0,\cr
&u|_{t=0}=v_0-V_0\equiv u_0,\cr}
\label{1.9}
\end{equation}
where we used (\ref{1.6}).
\goodbreak

\noindent
However, problem (\ref{1.9}) is not linear it implies that sufficient smallness of $\|u_0\|_{H^1(\Omega)}$ gives that $\|u(t)\|_{H^1(\Omega)}$ is controlled for all time. This is possible thanks to sufficient regularity of $v$ and sufficiently large $\nu$.

\begin{remark}\label{r1.1}
To justify (\ref{1.6}) we have to find elliptic problems implying existence of potentials $\varphi$ and $\psi$. Let $v$ be given. Let $\varphi$, $\psi$ satisfy the periodic boundary conditions. Then $\varphi$ and $\psi$ are solutions to the following elliptic problems
\begin{equation}\eqal{
&\Delta\varphi=\divv v,\cr
&\varphi\ {\rm satisfies\ the\ periodic\ boundary\ conditions},\cr
&\intop_\Omega\varphi dx=0\cr}
\label{1.10}
\end{equation}
and
\begin{equation}\eqal{
&\rot^2\psi=\rot v,\cr
&\divv\psi=0,\cr
&\psi\ {\rm satisfies\ the\ periodic\ boundary\ conditions},\cr
&\intop_\Omega\psi dx=0.\cr}
\label{1.11}
\end{equation}
\end{remark}
Now we collect results of this paper. The used notation is described in Notations \ref{n1.6} and \ref{n1.7}.

\noindent
From Theorem \ref{t3.6} and Lemma \ref{l4.1} we have

\begin{theorem}\label{t1.2}
Consider problem (\ref{1.7}). Assume that $v$ is expressed in the form (\ref{1.6}), where potentials $\varphi$, $\psi$ are solutions to problems (\ref{1.10}), (\ref{1.11}), respectively. Assume that $\nabla\varphi(0)\in\Gamma_1^2(\Omega)$, $\rot\psi(0)\in\Gamma_1^2(\Omega)$ and the quantity $\sqrt{\nu}\|\nabla\varphi(0)\|_{\Gamma_1^2(\Omega)}+\|\rot\psi(0)\|_{\Gamma_1^2(\Omega)}$ is bounded. Assume that there exist constants $c_3$, $c_4$ such that $c_3/\nu^\varkappa\le\|\varphi(0)\|_{L_p(\Omega)}\le c_4/\nu^\varkappa$, $p\in(3,6)$, $\varkappa=3/2-3/p\in(1/2,1)$. Moreover, $|\varphi(0)|_2\le c_4/\nu^\varkappa$.\\
Then for $\nu$ sufficiently large there exists a constant $A$ satisfying (\ref{3.54}) such that solutions to problem (\ref{1.7}) satisfy the bound
\begin{equation}\eqal{
&\sqrt{\nu}\|\nabla\varphi(t)\|_{\Gamma_1^2(\Omega)}+\|\rot\psi(t)\|_{\Gamma_1^2(\Omega)}+ \nu(\sqrt{\nu}\|\nabla\varphi\|_{L_2(0,t;\Gamma_1^3(\Omega))}\cr
&\quad+\|\rot\psi\|_{L_2(0,t;\Gamma_1^3(\Omega))})+\nu \|\nabla\varphi\|_{L_2(0,t;\Gamma_1^3(\Omega))}\le A,\cr}
\label{1.12}
\end{equation}
where $t\le T\le\nu^\beta$, $\beta<2(1-\varkappa)$.
If $cA^4\le\mu T$, where $c$ is the constant from (\ref{4.13}), then
\begin{equation}\eqal{
&\|\nabla\varphi(T)\|_{\Gamma_1^2(\Omega)}+\|\rot\psi(T)\|_{\Gamma_1^2(\Omega)}\le \|\nabla\varphi(0)\|_{\Gamma_1^2(\Omega)}\cr
&\quad+\|\rot\psi(0)\|_{\Gamma_1^2(\Omega)}.\cr}
\label{1.13}
\end{equation}
Moreover, it is shown that
\begin{equation}
\|\varphi(T)\|_3+\|v(T)\|_2\le c(e^{-\mu T}|v(0)|_2+A/\sqrt{\nu})\equiv cB.
\label{1.14}
\end{equation}
Then for $T$, $\nu$ sufficiently large the following inequality holds
\begin{equation}
\|v\|_{2,\infty,\Omega_T^t}+\|v\|_{3,2,\Omega_T^t}+ \|\varphi\|_{3,\infty,\Omega_T^t}|+\|\nabla\varphi\|_{3,2,\Omega_T^t}\le cB,
\label{1.15}
\end{equation}
for any $t\in(T,\infty)$.
\end{theorem}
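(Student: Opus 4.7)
The plan is to assemble the statement from two ingredients already proved in the paper: the long-time energy estimate Theorem \ref{t3.6} on the window $(0,T)$ with $T\le\nu^\beta$, and the small-data continuation Lemma \ref{l4.1} for times beyond $T$. The overall architecture is to use the bulk viscosity $\nu$ as a strong dissipation of the gradient part $\nabla\varphi$, to run for a long-but-finite time $T\sim\nu^\beta$, to observe that all relevant norms become small at $t=T$, and then to re-close by a small-data argument on $(T,\infty)$.

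First I would invoke Theorem \ref{t3.6} directly to produce the bound (\ref{1.12}) on $(0,T)$. The mechanism behind that theorem is that the assumption $c_3/\nu^\varkappa\le|\varphi(0)|_p\le c_4/\nu^\varkappa$ renders the initial gradient component of $v$ small in a subcritical $L_p$ norm, while $\nu\nabla\divv v$ supplies an $O(\nu)$ dissipation of $\nabla\varphi$. A Gronwall argument then tolerates a horizon $T$ of size up to $\nu^\beta$ with $\beta<2(1-\varkappa)$, and the constant $A$ in (\ref{1.12}) turns out $\nu$-independent modulo the explicit prefactors $\sqrt\nu$ and $\nu$ already displayed.

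Next I would unpack Lemma \ref{l4.1} into three pieces. \emph{(a)} For (\ref{1.13}): starting from (\ref{1.12}) and the nonlinear energy identity for $v=\nabla\varphi+\rot\psi$, the increment of $\|\nabla\varphi\|_{\Gamma_1^2}^2+\|\rot\psi\|_{\Gamma_1^2}^2$ over $(0,T)$ is controlled by a term of size $cA^4$ coming from the convection $\rot\psi\cdot\nabla v$ (this is where the constant of (\ref{4.13}) enters) minus $\mu$ times the time-integrated $\Gamma_1^3$-dissipation. The assumption $cA^4\le\mu T$ absorbs the nonlinear contribution into the dissipation and yields (\ref{1.13}). \emph{(b)} For (\ref{1.14}): the basic energy inequality $\frac{d}{dt}|v|_2^2+2\mu|\nabla v|_2^2+2\nu|\divv v|_2^2\le 0$, valid because $\rot\psi\cdot\nabla v$ is skew in $L_2$ thanks to $\divv\rot\psi=0$, combined with Poincar\'e gives $|v(T)|_2\le e^{-\mu T}|v(0)|_2$; the companion term $A/\sqrt\nu$ accounts for the gradient contribution through (\ref{1.12}), and $\|\varphi(T)\|_3$ is recovered from elliptic regularity for (\ref{1.10}) and the Sobolev embedding $H^1\hookrightarrow L_3$. \emph{(c)} For (\ref{1.15}): the problem is restarted at $t=T$ with data of size $cB$; since $B$ is small for $T$ and $\nu$ large, the small-data portion of Lemma \ref{l4.1} closes a uniform-in-$t$ bound in $L_\infty(T,\infty;L_2)\cap L_2(T,\infty;H^1)$, giving (\ref{1.15}).

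The main obstacle is the quantitative compatibility of these scalings: the constant $A$ must remain $\nu$-independent on $(0,T)$ even as $T\to\infty$, and the inequality $cA^4\le\mu T=\mu\nu^\beta$ must be consistent with the initial-data scale $|\varphi(0)|_p\sim\nu^{-\varkappa}$ via $\beta<2(1-\varkappa)$. Any slack in $\varkappa$ or $\beta$ propagates directly into the time window on which (\ref{1.13}) can be closed, so the careful tracking of powers of $\nu$ inside the Gronwall argument of Theorem \ref{t3.6} is the delicate point.
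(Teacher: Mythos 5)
Your proposal is correct and follows essentially the same route as the paper: Theorem~\ref{t1.2} is assembled exactly as you describe, with (\ref{1.12}) from Theorem~\ref{t3.6}, (\ref{1.13}) from the decay inequality (\ref{4.13})--(\ref{4.15}) of Lemma~\ref{l4.1}, and (\ref{1.14})--(\ref{1.15}) from the small-data restart at $t=T$ (which the paper carries out in Lemma~\ref{l4.2} rather than Lemma~\ref{l4.1}, a labelling the paper itself blurs). The only minor imprecision is that the damping in (\ref{1.13}) comes from the Poincar\'e-type term $\mu Y^2$ in (\ref{4.13}) rather than from the time-integrated $\Gamma_1^3$-dissipation, but the mechanism you describe is otherwise the paper's.
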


\begin{theorem}\label{t1.3}
Consider problem (\ref{1.9}) with coefficients dependent on solutions to problem (\ref{1.7}). Let the assumptions of Theorem \ref{t1.2} hold. Let $u(0)\in H^1(\Omega)$ and $\|u(0)\|_{H^1(\Omega)}\le\gamma$, where $\gamma$ is sufficiently small. Then, for $\nu$ sufficiently large, we have
\begin{equation}
\|u(t)\|_{H^1(\Omega)}\le\gamma
\label{1.16}
\end{equation}
for any $t\in\R_+$.
\end{theorem}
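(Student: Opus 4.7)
The plan is to close a bootstrap estimate: assuming $\sup_{s\in[0,t]}\|u(s)\|_{H^1(\Omega)}\le\gamma$, I show that combined $L_2$ and $H^1$ energy estimates force $\|u(t)\|_{H^1(\Omega)}$ to remain strictly below $\gamma$, provided $\gamma$ is small and $\nu$ is large. The structural inputs are Theorem \ref{t1.2}, especially (\ref{1.14})--(\ref{1.15}), which supply the regularity bounds for $v$ and, crucially, the $\nu^{-\varkappa}$ smallness of $\nabla\varphi$ (and hence of $\divv v=\Delta\varphi$).

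First I would derive the $L_2$ energy identity by testing (\ref{1.9}) with $u$. The transport term $(\rot\psi\cdot\nabla u)\cdot u$ integrates to zero since $\divv\rot\psi=0$. I split $\int_\Omega(u-\nabla\varphi)\cdot\nabla(v-u)\cdot u\,dx$ into four pieces: the two containing $\nabla\varphi$ are small by H\"older combined with the $\nu^{-\varkappa}$ bounds; $\int_\Omega(u\cdot\nabla u)\cdot u\,dx=-\tfrac{1}{2}\int_\Omega\Delta\varphi\,|u|^2\,dx$ is likewise $O(\nu^{-\varkappa})\|u\|_{H^1}^2$; and $\int_\Omega(u\cdot\nabla v)\cdot u\,dx$ is absorbed by $\mu\|\nabla u\|_{L_2}^2$ via Sobolev--Ladyzhenskaya interpolation, using the bootstrap $\|u\|_{H^1}\le\gamma$ and control of $\|\nabla v\|_{L_2}$ from (\ref{1.15}). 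For the pressure, the key point is $\divv u=\divv v=\Delta\varphi$, so the Helmholtz decomposition forces $u=u_H+\nabla\varphi$ with $u_H$ divergence free; hence $\int_\Omega\nabla P\cdot u\,dx=-\int_\Omega P\,\divv v\,dx$, and the elliptic identity $\Delta P=-\partial_iV_j\partial_jV_i$ with $V=v-u$, together with the $\nu^{-\varkappa}$ smallness of $\divv v$, renders this term negligible.

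Next I would test (\ref{1.9}) with $-\Delta u$, extracting the dissipation $\mu\|\Delta u\|_{L_2}^2+\nu\|\nabla\divv u\|_{L_2}^2$. The convective commutators such as $\int_\Omega(\rot\psi\cdot\nabla u)\Delta u\,dx$, $\int_\Omega(u\cdot\nabla u)\Delta u\,dx$, $\int_\Omega(\nabla\varphi\cdot\nabla v)\Delta u\,dx$ are handled by interpolation, e.g.\ $\|u\|_{L_6}\|\nabla u\|_{L_3}\|\Delta u\|_{L_2}\le c\|u\|_{H^1}^{3/2}\|u\|_{H^2}^{3/2}$; the factor $\|u\|_{H^1}^{3/2}\le\gamma^{3/2}$, combined with the reference bounds on $v$ and $\rot\psi$, permits absorption into $\mu\|\Delta u\|_{L_2}^2$. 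The pressure term, after integrating by parts and using $\Delta u=\nabla\divv u-\rot\rot u$ with $\rot\nabla P=0$, reduces to $-\int_\Omega\Delta P\,\divv u\,dx=-\int_\Omega\Delta P\,\Delta\varphi\,dx$, which is again small via $\Delta P=-\partial_iV_j\partial_jV_i$ and the $\nu^{-\varkappa}$ control on $\Delta\varphi$.

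Adding the two estimates yields a schematic inequality
\[
\tfrac{d}{dt}\|u\|_{H^1(\Omega)}^2+c_0\|u\|_{H^2(\Omega)}^2\le\bigl(c_1\gamma+c_2\nu^{-\varkappa}\bigr)\|u\|_{H^1(\Omega)}^2+c_3\nu^{-2\varkappa}\phi(t),
\]
with $\phi\in L_1(T,\infty)$ by (\ref{1.15}). Gronwall's lemma together with $\|u(0)\|_{H^1(\Omega)}\le\gamma$ then gives $\|u(t)\|_{H^1(\Omega)}\le\gamma$ once $\gamma$ is small and $\nu$ is large, closing the bootstrap by continuity. The main obstacle will be the careful bookkeeping of all coupling terms between $u$ and the reference solution $(v,\varphi,\psi)$ so that every constant multiplying $\|u\|_{H^1(\Omega)}^2$ is genuinely $o(1)$ in $\gamma$ and $\nu^{-1}$; the pressure contribution is the most delicate, since it forces use of the Helmholtz decomposition of $u$ and the fortuitous coupling $\divv u=\divv v$ inherited from $\divv V=0$.
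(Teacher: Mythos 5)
Your overall strategy (energy estimates at the $L_2$ and $H^1$ levels, smallness of $\nabla\varphi$ from large $\nu$, a continuity/bootstrap closure) is the same as the paper's, and your observation that $\divv u=\divv v=\Delta\varphi$ so that $u-\nabla\varphi$ is divergence free is exactly the structural fact the paper exploits. You use it only to rewrite the pressure integral, whereas the paper (Lemmas \ref{l5.1}, \ref{l5.2}) tests (\ref{1.9}) directly with $u-\nabla\varphi$ and $(u-\nabla\varphi)_x$, which kills the pressure term identically and avoids any estimate of $P$; the price is extra terms in $\nabla\varphi_t$, $\nabla\varphi_{xt}$, which are controlled by the $\Gamma_1^2$ bounds of Theorem \ref{t1.2}. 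Your alternative of keeping $P$ and estimating $\int_\Omega P\,\Delta\varphi\,dx$ and $\int_\Omega\Delta P\,\Delta\varphi\,dx$ through $\Delta P=-\partial_iV_j\partial_jV_i$ is workable in the periodic setting, though note the uniform-in-time smallness actually available for $\nabla\varphi$ is $\|\nabla\varphi(t)\|_{\Gamma_1^2}\le A/\sqrt{\nu}$ from (\ref{1.12}), not $\nu^{-\varkappa}$ (the $\nu^{-\varkappa}$ bound in Theorem \ref{t1.2} concerns only $|\varphi(0)|_p$).

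The genuine gap is in your final schematic inequality, specifically the claim that every constant multiplying $\|u\|_{H^1(\Omega)}^2$ is $o(1)$ in $\gamma$ and $\nu^{-1}$. The linear coupling terms $\int_\Omega u\cdot\nabla v\cdot u\,dx$ and $\int_\Omega u\cdot\nabla v\cdot\Delta u\,dx$ produce, after absorption into the dissipation, a coefficient of order $\|v(t)\|_{H^2(\Omega)}^2$ (or $\|\nabla v\|_{L_2}^4$) on $\|u\|_{H^1(\Omega)}^2$. On the initial interval $[0,T]$ with $T\sim\nu^\beta$, Theorem \ref{t1.2} only gives $\|v(t)\|_{H^2(\Omega)}\le cA$ with $A$ large; the small bound $cB$ in (\ref{1.15}) is available only for $t>T$. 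So on $[0,T]$ this coefficient is $O(A^2)$, not small, and a single global Gronwall from $t=0$ produces an amplification factor $\exp\big(c\int_0^T\|v\|_{H^2}^2\,dt\big)\sim\exp(cA^2)$ that destroys the conclusion $\|u(t)\|_{H^1}\le\gamma$ as you have set it up. The missing idea, which is the content of Lemmas \ref{l5.3} and \ref{l5.4}, is to exploit that the \emph{time integral} $\int_{kT}^{(k+1)T}\|v\|_{2}^2\,dt\le A^2$ is what enters the exponent, to work on the intervals $[kT,(k+1)T]$ furnished by Lemmas \ref{l4.1}--\ref{l4.2}, and to beat the $\exp(cA^2)$ amplification with the dissipative decay $\exp(-\mu T/2)$ under the hypothesis $cA^2\le\mu T/4$ (this is precisely why $T$, hence $\nu$, must be large, and why the statement is proved by iteration in $k$ rather than by one global Gronwall). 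Without this interval-and-decay bookkeeping your differential inequality does not close; with it, your argument becomes essentially the paper's.
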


\begin{theorem}\label{t1.4}
Let the assumptions of Theorems \ref{t1.2} and \ref{t1.3} hold. Then, by (\ref{1.12}), (\ref{1.15}) and (\ref{1.16}), we have
\begin{equation}
\|V\|_{L_\infty(\R_+;H^1(\Omega))}\le c(A+\gamma).
\label{1.17}
\end{equation}
\end{theorem}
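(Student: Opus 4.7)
The strategy is elementary once Theorems \ref{t1.2} and \ref{t1.3} are in hand: use the splitting $V=v-u$ from (\ref{1.8}) and apply the triangle inequality in $H^1(\Omega)$,
\begin{equation*}
\|V(t)\|_{H^1(\Omega)}\le \|v(t)\|_{H^1(\Omega)}+\|u(t)\|_{H^1(\Omega)}.
\end{equation*}
What remains is to combine the two time-regime estimates for $v$ provided by Theorem \ref{t1.2} into a single uniform-in-time $H^1$ bound, and to invoke Theorem \ref{t1.3} for $u$.

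For $t\in[0,T]$, I would use the Helmholtz decomposition (\ref{1.6}), so $v=\nabla\varphi+\rot\psi$. The bound (\ref{1.12}) gives $\sqrt{\nu}\,\|\nabla\varphi(t)\|_{\Gamma_1^2(\Omega)}+\|\rot\psi(t)\|_{\Gamma_1^2(\Omega)}\le A$. Since the space $\Gamma_1^2(\Omega)$ is a pointwise-in-$t$ Sobolev-type space controlling the $H^1$-regularity in $x$, this yields $\|v(t)\|_{H^1(\Omega)}\le cA$ on $[0,T]$, with the gradient part actually smaller by a factor $1/\sqrt{\nu}$.

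For $t\ge T$, I would use (\ref{1.15}). The notation $\|v\|_{2,\infty,\Omega_T^t}$ denotes the $L_\infty(T,t;H^2(\Omega))$ norm, which in particular controls $\sup_{s\in[T,t]}\|v(s)\|_{H^1(\Omega)}\le cB$, where $B=e^{-\mu T}|v(0)|_2+A/\sqrt{\nu}$ by (\ref{1.14}). Since the hypotheses of Theorem \ref{t1.2} force $T$ and $\nu$ to be sufficiently large, $B\le cA$ with constants independent of $t$, giving $\|v(t)\|_{H^1(\Omega)}\le cA$ uniformly on $[T,\infty)$. Gluing the two intervals produces $\|v\|_{L_\infty(\R_+;H^1(\Omega))}\le cA$.

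Finally, Theorem \ref{t1.3} delivers $\|u(t)\|_{H^1(\Omega)}\le\gamma$ for all $t\in\R_+$, so the triangle inequality yields (\ref{1.17}). The only step requiring any care is checking the embedding compatibility between $\Gamma_1^2(\Omega)$ and the mixed norms appearing in (\ref{1.15}) on the one hand, and the plain $H^1(\Omega)$ norm on the other; this is routine once the definitions in Notations \ref{n1.6}--\ref{n1.7} are unpacked, and I expect no genuine analytical obstacle. The whole proof is thus a bookkeeping argument combining (\ref{1.12}), (\ref{1.15}), and (\ref{1.16}) via $V=v-u$.
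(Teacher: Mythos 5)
Your proposal is correct and follows essentially the same route as the paper: the paper's proof (Theorem \ref{t6.1}) is precisely the decomposition $V=v-u$ with the triangle inequality in $H^1(\Omega)$, combining the bound $\|v\|_1\le A$ from (\ref{3.40})/(\ref{4.18}) with the bound on $\|u\|_1$ from (\ref{5.18}). Your explicit splitting of the $v$-estimate into the regimes $t\le T$ (via (\ref{1.12})) and $t\ge T$ (via (\ref{1.15})) is, if anything, slightly more careful than the paper's one-line argument.
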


\begin{remark}\label{r1.5}
The aim of this paper is to prove estimate (\ref{1.17}) for solutions to the Navier-Stokes equations (\ref{1.1}). We do not know how (\ref{1.17}) can be proved directly for solutions to (\ref{1.1}). Therefore, we consider first problem (\ref{1.7}), where $v$ is used in form (\ref{1.6}). Since (\ref{1.7}) is considered with large parameter $\nu$ decomposition (\ref{1.6}) implies smallness of $\varphi$ in $\Gamma_1^2(\Omega)$ if $\varphi(0)$ in $\Gamma_1^2(\Omega)$ is small too. Next, for $\nu$ sufficiently large, we are able to prove the global existence of solutions to (\ref{1.7}) without any restrictions on the magnitude of $\psi$ in suitable norms. It is important for comparison $V$ with $v$ because the magnitude of divergence free $V$ can be restricted by data only. This implies that function $u$ defined by (\ref{1.8}) and satifying (\ref{1.9}) can be show small in $H^1(\Omega)$ if the difference $\rot\psi|_{t=0}-V|_{t=0}$ in $H^1(\Omega)$ and $\|\nabla\varphi(0)\|_1\le c/\nu$ are small too. We have to mention that $\nu$ is large but always finite. Any passage with $\nu$ to infinity makes the considerations in this paper wrong.

The first step in the proof of global existence of solutions to problem (\ref{1.7}) is derivation of estimate (\ref{1.12}). The proof of (\ref{1.12}) is divided into three main steps. First we derive inequality (\ref{2.27}) of the form
\begin{equation}
|v|_{6,\infty,\Omega^t}\le D_1,
\label{1.18}
\end{equation}
where $D_1$ is an increasing positive function of $\Psi/\nu^\alpha$, $\alpha>0$.

\noindent
Inequality (\ref{1.18}) follows from (\ref{2.2}) and (\ref{1.19}), where we needed that \break $c_1/\nu^\varkappa\le|\varphi(0)|_p\le c_2/\nu^\varkappa$, $c_1<c_2$, $\varkappa\in(1/2,1)$, $p\in(3,6)$. We have to emphasize that (\ref{1.18}) is not any estimate because $D_1$ depends on some norms of $v$ multiplied by $\Psi/\nu^\alpha$. Moreover, $t\le\nu^\beta$, $\beta<2(1-\varkappa)$ yields that time $t$ does not appear in $D_1$.

\noindent
In the next step we derive inequality (\ref{2.28}) of the form
\begin{equation}
|v_t(t)|_2+\|v_t\|_{1,2,\Omega^t}\le D_2,
\label{1.19}
\end{equation}
where $D_2$ is an increasing function of $D_1$.

Denote the l.h.s. of (\ref{1.12}) by $X(t)$.

\noindent
In the third step by applying the energy method and (\ref{1.18}), (\ref{1.19}) we derive inequality (\ref{3.51}) of the form
\begin{equation}
X\le\phi(D_1(X),D_2(X),X/\nu,X/\sqrt{\nu},X(0)),
\label{1.20}
\end{equation}
where $\phi$ is an increasing positive function of polynomial type. In (\ref{1.20}) it is assumed that the quantity
\begin{equation}
X(0)=\sqrt{\nu}\|\nabla\varphi(0)\|_{\Gamma_1^2(\Omega)}+ \|\rot\psi(0)\|_{\Gamma_1^2(\Omega)}
\label{1.21}
\end{equation}
is finite. This implies the foolowing smallness condition
\begin{equation}
\|\nabla\varphi(0)\|_{\Gamma_1^2(\Omega)}\le c/\sqrt{\nu}.
\label{1.22}
\end{equation}
However, to show (\ref{1.18}) we needed that
\begin{equation}
c_1/\nu^\varkappa\le|\varphi(0)|_p\le c_2/\nu^\varkappa,\quad \varkappa\in(1/2,1),\quad p\in(3,6).
\label{1.23}
\end{equation}
We have to emphasize that the energy method does not work without (\ref{1.18}), (\ref{1.19}), because estimates can not be closed.

Since $\nu$ is finite estimate (\ref{1.12}) derived from (\ref{1.20}) implies long time existence of solutions to problem (\ref{1.7}). To prove global existence we derive (\ref{1.14}), where $B$ is small for large $\nu$.

\noindent
This implies that initial data for problem (\ref{1.7}) at time $t=T$ are small. Hence the small data technique implies global existence of regular solutions to (\ref{1.7}) in $(T,\infty)$.

We have to emphasize that we are restricted to derive appropriate estimates only. Then the local solution can be proved by the method of successive approximations and the continuation argument implies global existence.
\end{remark}

\noindent
Theorem \ref{t1.3} follows from Lemmas \ref{l5.1}--\ref{l5.4} by the stability argument which works for $\nu$ sufficiently large.

\noindent
Theorem \ref{t1.4} follows directly from Theorems \ref{t1.2} and \ref{t1.3}.

\noindent
We use the simplified notation

\begin{notation}\label{n1.6}
$$\eqal{
&\|u\|_{L_p(\Omega)}=|u|_p,\quad \|u\|_{H^s(\Omega)}=\|u\|_s,\quad \|u\|_{W_p^s(\Omega)}=\|u\|_{s,p},\cr
&H^s(\Omega)=W_2^s(\Omega),\quad \|u\|_{W_p^s(\Omega)}=\bigg(\sum_{|\alpha|\le s}\intop_\Omega|D_x^\alpha u(x)|^pdx\bigg)^{1/p},\cr
&D_x^\alpha=\partial_{x_1}^{\alpha_1}\partial_{x_2}^{\alpha_2}\partial_{x_3}^{\alpha_3},\quad |\alpha|=\alpha_1+\alpha_2+\alpha_3,\qquad s\in\N,\quad p\in[1,\infty],\cr
&|u|_{k,l}^2=\sum_{i=0}^l\|\partial_t^iu\|_{k-i}^2,\quad |u|_{k,l,r,\Omega^t}=\bigg(\intop_0^t|u(t')|_{k,l}^rdt'\bigg)^{1/r},\cr
&|u|_{r,q,\Omega^t}=\bigg(\intop_0^t|u(t')|_r^qdt'\bigg)^{1/q},\quad  r,q\in[1,\infty],\cr
&\eqal{&\|u\|_{L_r(0,t;H^s(\Omega))}=\|u\|_{s,r,\Omega^t},\cr
&\|u\|_{L_r(0,t;W_p^s(\Omega))}=\|u\|_{s,p,r,\Omega^t},\cr}\quad\ \  s\in\N,\quad r,p\in[1,\infty].\cr}
$$
Introduce the space
$$
\Gamma_l^k(\Omega)=\{u\colon|u|_{k,l}<\infty\},\quad l\le k,\quad l,k\in\N_0=\N\cup\{0\}.
$$
For $t_0$ fixed we have
$$\|u(t_0)\|_{\Gamma_l^k(\Omega)}=\|u(t_0)\|_{H^k(\Omega)}+\sum_{i=1}^l\|\partial_t^iu(t)|_{t=t_0}\|_{H^{k-i}(\Omega)}.
$$
By $\phi$, $\phi_\sigma$, $\sigma\in\N$, we always denote increasing positive continuous functions of their argumens.
\end{notation}

\noindent
Finally, we introduce some notation

\begin{notation}\label{n1.7}
$$\eqal{
\Psi(t)&=\nu|\nabla\varphi|_{3,1,2,\Omega^t}\quad \chi_0=\sqrt{\nu}|\nabla\varphi|_{2,1,\infty,\Omega^t},\cr
X^2(t)&=\nu|\nabla\varphi(t)|_{2,1}^2+|\rot\psi(t)|_{2,1}^2+ \mu(\nu|\nabla\varphi|_{3,1,2,\Omega^t}^2\cr
&\quad+|\rot\psi|_{3,1,2,\Omega^t}^2)+\nu^2|\nabla\varphi|_{3,1,2,\Omega^t}^2,\cr
X^2(0)&=\nu|\nabla\varphi(0)|_{2,1}^2+|\rot\psi(0)|_{2,1}^2,\cr
Y^2(t)&=\nu|\nabla\varphi(t)|_{2,1}^2+|\rot\psi(t)|_{2,1}^2.\cr}
$$
Let $\Omega_T^t=\Omega\times(T,t)$, $t>T$. Let $\Psi_T$, $\chi_{0T}$, $X_T$ contain $\Omega_T^t$ instead of $\Omega^t$. Then
$$
X_T^2(T)=\nu|\nabla\varphi(T)|_{2,1}^2+|\rot\psi(T)|_{2,1}^2.
$$
$$\eqal{
&A_1=|v(0)|_2,\quad\! D_1=c_1\Psi^{2/3}\phi_1(\Psi^{1/3}/\nu^{1/3}+1/\nu^{(\varkappa-1/2)/3})+ c_1(|v(0)|_6+A_1),\cr
&\phi_1=\exp\bigg[{c|v|_{2p/(p-2),2/(1-\varkappa),\Omega^t}^{2/(1-\varkappa)} \over [(\mu+\nu)^\varkappa\min_t(|\varphi(0)|_p-{\sqrt{t}\over\nu}\Psi)]^{1/(1-\varkappa)}}\bigg],
\quad p\in(3,6),\cr
&\varkappa=3/2-3/p\in(1/2,1),\quad D_2=|v_t(0)|_2\exp(D_1^2A_1^2).\cr}
$$
\end{notation}

\noindent
There is a huge literature concerning the regularity problem of weak solutions to the Navier-Stokes equations. Therefore we are not able to present all papers devoted to this problem. Moreover, we are not able to close the list of mathematicians trying to solve it. Hence, we concentrate the presentation on some directions and recall mathematicians working in these areas.
\vskip6pt

\begin{itemize}
\item[1.] Formulation of sufficient conditions guaranteeing regularity of weak solutions\\
The first who formulated such conditions was J. Serrin \cite{S}. This approach was continued by D. Chae, H.J. Choe, H. Kozono, H. Sohr, J. Neustupa, M. Pokorny, P. Penel and the references of their papers are cited in \cite{Z8, Z9}. We have to recall results of G. Seregin, V. \v Sver\'ak, T. Shilkin, A. Mikhaylov (see \cite{S1, S2, S3, S4, SSS, SS1, ESS, MS}).
\item[2.] Local regularity theory.\\
The direction of examining regularity of weak solutions of the Navier-Stokes equations was initiated in the celebrated paper of L. Caffarelli, R. Kohn, L. Nirenberg (see \cite{CKN}). The famous mathematicians working in this directions are G. Seregin \cite{S1, S2, S3, S4}, V. \v Sver\'ak \cite{SS1}.
\item[3.] Rotating Navier-Stokes equations.\\
The existence of global regular solutions to the rotating Navier-Stokes equations was strongly examined by A. Babin, A. Mahalov, B. Nicolaenko (see \cite{BMN1, BMN2, BMN3, MN}).
\item[4.] Global regular solutions to the Navier-Stokes equations with some special properties. We have to distinguish the following directions
\begin{itemize}
\item[a.] Thin domains (see \cite{RS1, RS2, RS3}).
\item[b.] Small variations of vorticity (see \cite{CF}).
\item[c.] Motions in cylindrical domains with data close to data of 2d solutions (see \cite{Z10, Z11, Z12, Z13, NZ}).
\item[d.] Motions in axisymmetric domains with data close to data of axisymmetric solutions (see \cite{Z3, Z4}).
\end{itemize}
\end{itemize}

\section{Notation and auxiliary results}\label{s2}

First we obtain the energy type estimate for solutions to (\ref{1.2}).

\begin{lemma}\label{l2.1}
Let $v$ be a solution to (\ref{1.7}). Assume that $v(0)\in L_2(\Omega)$.\\
Then the following estimate holds
\begin{equation}
|v(t)|_2^2+\mu\|v\|_{1,2,\Omega^t}^2+\nu|\Delta\varphi|_{2,\Omega^t}^2\le c|v(0)|_2^2\equiv A_1^2.
\label{2.1}
\end{equation}
\end{lemma}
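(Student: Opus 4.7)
The plan is to carry out the standard $L^2$ energy estimate, testing equation (\ref{1.7}) with $v$ and exploiting the structure $v=\nabla\varphi+\rot\psi$ together with the periodic boundary conditions.

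First I would multiply (\ref{1.7}) by $v$, integrate over $\Omega$, and treat the four terms separately. The time derivative term gives $\tfrac12\tfrac{d}{dt}|v|_2^2$. The convective term $\intop_\Omega(\rot\psi\cdot\nabla v)\cdot v\,dx$ vanishes by the divergence theorem: since $\divv\rot\psi=0$ and the boundary conditions are periodic, the standard identity $\intop_\Omega(b\cdot\nabla v)\cdot v\,dx=-\tfrac12\intop_\Omega(\divv b)|v|^2\,dx$ applies with $b=\rot\psi$. The viscous term $-\mu\intop_\Omega\Delta v\cdot v\,dx$ yields $\mu|\nabla v|_2^2$ after integration by parts (again the periodic boundary removes all boundary contributions). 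For the remaining term, I would use the key observation that under the decomposition (\ref{1.6}) we have $\divv v=\Delta\varphi$, because $\divv\rot\psi=0$; hence
\begin{equation*}
-\nu\intop_\Omega\nabla\divv v\cdot v\,dx=\nu\intop_\Omega(\divv v)^2\,dx=\nu|\Delta\varphi|_2^2.
\end{equation*}

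Combining these computations gives the pointwise-in-time identity
\begin{equation*}
\tfrac12\tfrac{d}{dt}|v|_2^2+\mu|\nabla v|_2^2+\nu|\Delta\varphi|_2^2=0,
\end{equation*}
and integrating over $(0,t)$ yields the bound with $\mu|\nabla v|_{2,\Omega^t}^2$ on the left. To upgrade $|\nabla v|_{2,\Omega^t}$ to the full $\|v\|_{1,2,\Omega^t}$ norm, I would invoke the Poincaré inequality on $\Omega$ applied to $v$, which is valid because (\ref{1.4}) gives $\intop_\Omega v\,dx=0$ for all $t$; this yields $|v|_2\le c|\nabla v|_2$ and allows absorbing the zeroth-order contribution into $\mu|\nabla v|_{2,\Omega^t}^2$ up to a multiplicative constant depending on $|\Omega|$.

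There is no genuine obstacle in this argument — every step is standard energy-method bookkeeping. The only two points worth taking care over are (i) verifying that the convective term truly drops out, which rests squarely on $\divv\rot\psi=0$ and the periodicity (so that the usual integration by parts produces no boundary integral), and (ii) the replacement of $|\divv v|_2$ by $|\Delta\varphi|_2$, which is a definitional consequence of the Helmholtz-type decomposition (\ref{1.6}) and the elliptic problem (\ref{1.10}) for $\varphi$. Once both are in place, the estimate (\ref{2.1}) follows immediately with a constant $c$ depending only on $|\Omega|$ through the Poincaré constant.
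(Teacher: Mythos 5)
Your proof is correct and follows exactly the route the paper intends: its own proof is the single sentence ``multiply (\ref{1.7}) by $v$, integrate by parts, exploit (\ref{1.4}), (\ref{1.6})'', and your computation (vanishing of the convective term via $\divv\rot\psi=0$, the identification $\divv v=\Delta\varphi$, and the Poincar\'e inequality from the zero-mean condition (\ref{1.4}) to recover the full $\|v\|_{1,2,\Omega^t}$ norm) is precisely the expansion of that sentence.
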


\begin{proof}
Multiply (\ref{1.7}) by $v$, integrate over $\Omega$, integrate by parts, exploit (\ref{1.4}), (\ref{1.6}) and integrate with respect to time. Then we obtain (\ref{2.1}) and conclude the proof.
\end{proof}

\begin{lemma}\label{l2.2}
Let $v$ be a solution to (\ref{1.7}). Assume that $v(0)\in L_r(\Omega)$, $r>2$. Let $\nu|\divv v|_{3r/(r+1),r,\Omega^t}$ be finite.\\
Then there exists a constant $c_1=c_1(r,\mu,c_0)$, where $c_0$ is the constant from imbedding (\ref{2.7}), such that
\begin{equation}\eqal{
&|v(t)|_r+|v|_{3r,r,\Omega^t}+|\nabla|v|^{r/2}|_{2,\Omega^t}^{2/r}\cr
&\le c_1(r,\mu,c_0)[\nu|\divv v|_{3r/(r+1),r,\Omega^t}+|v(0)|_r+A_1]\cr}
\label{2.2}
\end{equation}
where $c_1=\max\{\bar c,1\}$ and $\bar c$ is introduced in (\ref{2.9}).
\end{lemma}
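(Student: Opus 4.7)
\noindent The plan is to run a standard $L_r$-energy argument on (\ref{1.7}) by testing with the multiplier $|v|^{r-2}v$; the structural feature $\divv\rot\psi=0$ annihilates the transport term, leaving the viscosity and the $-\nu\nabla\divv v$ term to interact. Multiplying (\ref{1.7}) by $|v|^{r-2}v$, integrating over $\Omega$, using periodicity and the identity $\rot\psi\cdot\nabla v\cdot|v|^{r-2}v=\rot\psi\cdot\nabla(|v|^r/r)$, I obtain
\[
\frac{1}{r}\frac{d}{dt}|v|_r^r+\mu\!\int_\Omega|v|^{r-2}|\nabla v|^2dx+(r-2)\mu\!\int_\Omega|v|^{r-2}|\nabla|v||^2dx=\nu\!\int_\Omega\divv v\cdot\divv(|v|^{r-2}v)\,dx,
\]
with the two viscous integrals together dominating $\tfrac{4(r-1)}{r^2}\mu|\nabla|v|^{r/2}|_2^2$.

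Next I expand $\divv(|v|^{r-2}v)=|v|^{r-2}\divv v+(r-2)|v|^{r-3}v\cdot\nabla|v|$, use $|v\cdot\nabla|v||\le|v||\nabla|v||$ together with $|v|^{r-2}|\nabla|v||=(2/r)|v|^{r/2-1}|\nabla|v|^{r/2}|$, and apply H\"older in space with exponents $3r/(r+1)$, $6r/(r-2)$, $2$ on the cross piece and $3r/(r+1)$, $6r/(r-2)$ on the $|\divv v|^2$ piece. A Young splitting absorbs part of $\mu|\nabla|v|^{r/2}|_2^2$ back to the left, leaving $(c\nu^2/\mu)|\divv v|_{3r/(r+1)}^2|v|_{3r}^{r-2}$ on the right. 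Integrating in time and applying H\"older in time with exponents $r/2$, $r/(r-2)$ gives
\[
|v(t)|_r^r+c\mu|\nabla|v|^{r/2}|_{2,\Omega^t}^2\le|v(0)|_r^r+\frac{c\nu^2}{\mu}|\divv v|_{3r/(r+1),r,\Omega^t}^2|v|_{3r,r,\Omega^t}^{r-2}.
\]
The periodic Sobolev imbedding (\ref{2.7}) applied to $|v|^{r/2}$, $|v|_{3r}^r\le c_0^2(|\nabla|v|^{r/2}|_2^2+|v|_r^r)$, integrated in time and combined with (\ref{2.1}) and a Gagliardo--Nirenberg bound on $\int_0^t|v|_r^r d\tau$, turns the diffusive left-hand side into $c\mu|v|_{3r,r,\Omega^t}^r$ up to an $A_1$-controlled lower-order piece. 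A final Young splitting (exponents $r/2$, $r/(r-2)$) absorbs $\tfrac12|v|_{3r,r,\Omega^t}^r$ into the LHS and leaves $c\nu^r/\mu^{r-1}\cdot|\divv v|_{3r/(r+1),r,\Omega^t}^r$ on the right. Taking $r$-th roots yields (\ref{2.2}), with $c_1=c_1(r,\mu,c_0)$ of order $\mu^{-(r-1)/r}$ collecting the negative powers of $\mu$ from the two Young splittings; the constant $\bar c$ of (\ref{2.9}) is exactly this collected constant and $c_1=\max\{\bar c,1\}$ covers the additive $|v(0)|_r+A_1$ contributions.

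The main obstacle is matching the two Young splittings so that, after the $r$-th root, the right-hand side is linear in $\nu|\divv v|_{3r/(r+1),r,\Omega^t}$ rather than some higher power: the splitting parameter must be chosen so that exactly $\nu^r/\mu^{r-1}$ multiplies $|\divv v|_{3r/(r+1),r,\Omega^t}^r$ before taking the root. A subsidiary difficulty is controlling $\int_0^t|v|_r^r d\tau$ purely from (\ref{2.1}) without a Gronwall loop (which would inject exponential time growth and corrupt (\ref{2.2})); Gagliardo--Nirenberg between $|v|_2\le cA_1$ and the $H^1$-bound $\mu^{1/2}|\nabla v|_{2,\Omega^t}\le cA_1$ suffices for $r\le 10/3$, while a short bootstrap in $r$ handles larger $r$.
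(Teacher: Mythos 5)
Your main line is the paper's own: test (\ref{1.7}) with $|v|^{r-2}v$, use $\divv\rot\psi=0$ to annihilate the transport term, extract the coercive term $\tfrac{4(r-1)}{r^2}\mu|\nabla|v|^{r/2}|_2^2$, estimate the $\nu$-term by H\"older and Young so that, after a second Young splitting with exponents $r/(r-2)$ and $r/2$, the coefficient of $|\divv v|_{3r/(r+1),r,\Omega^t}^r$ becomes $\bar c\,\nu^r$ (this is exactly how (\ref{2.9}) arises), and convert the diffusive term into $|v|_{3r,r,\Omega^t}^r$ via (\ref{2.7}). A minor slip: your first display should carry $-\nu\int_\Omega\divv v\,\divv(|v|^{r-2}v)\,dx$ on the right (the term sits on the left with a plus sign), but since you estimate by absolute values and never exploit the sign of the positive part $\nu\int_\Omega|v|^{r-2}|\divv v|^2dx$, this is harmless.

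The genuine gap is in how you pay for the $|v|_r^r$ that the imbedding (\ref{2.7}) injects. You propose to bound $\int_0^t|v|_r^r\,d\tau$ by Gagliardo--Nirenberg from (\ref{2.1}), and you correctly note this only works for $r\le 10/3$; but the lemma is applied in Remark \ref{r2.5} with $r=6$, so the ``short bootstrap in $r$'' is not an optional refinement --- it is the only case that matters, and it does not deliver (\ref{2.2}) as stated: each intermediate application of the lemma at a level $r'<r$ puts $\nu|\divv v|_{3r'/(r'+1),r',\Omega^t}$ and $|v(0)|_{r'}$ on the right-hand side, so the final bound for $r=6$ would contain these extra quantities rather than only $\nu|\divv v|_{3r/(r+1),r,\Omega^t}+|v(0)|_r+A_1$. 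The paper avoids the problem by a pointwise-in-time device: it splits $|\nabla|v|^{r/2}|_2^2$ into thirds, applies the Poincar\'e inequality to one third to manufacture a positive multiple of $|v|_r^r$ on the left-hand side of the differential inequality, and the only residual is the mean-value term $\big|\int_\Omega|v|^{r/2}dx\big|^2\le\varepsilon_3|v|_r^r+c|v|_2^r$, whose $\varepsilon_3|v|_r^r$ part is absorbed by the newly created good term and whose $|v|_2^r$ part integrates to $cA_1^r$ using only $|v|_{2,\infty,\Omega^t}\le A_1$ and $\|v\|_{1,2,\Omega^t}\le cA_1/\sqrt{\mu}$, uniformly in $r>2$. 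You need this (or an equivalent) step; as written, your argument proves a weaker statement than (\ref{2.2}) for the value of $r$ the paper actually uses.
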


\begin{proof}
Multiplying (\ref{1.7}) by $v|v|^{r-2}$ and integrating over $\Omega$ yield
\begin{equation}
{1\over r}{d\over dt}|v|_r^r+\mu\intop_\Omega\nabla v\cdot\nabla(v|v|^{r-2})dx+\nu\intop_\Omega\divv v\divv(v|v|^{r-2})dx=0.
\label{2.3}
\end{equation}
First we consider
$$\eqal{
J_1&=\mu\intop_\Omega\nabla v\cdot\nabla(v|v|^{r-2})dx=\mu\intop_\Omega|\nabla v|^2|v|^{r-2}dx\cr
&\quad+\mu\intop_\Omega v_k\nabla v_k(r-2)|v|^{r-3}\nabla|v|dx\equiv I_1+I_2.\cr}
$$
Using that $v_k\nabla v_k={1\over2}\nabla|v|^2=|v|\nabla|v|$ we have
$$\eqal{
I_2&=(r-2)\mu\intop_\Omega|v|^{r-2}|\nabla|v|\,|^2dx=(r-2)\mu\intop_\Omega|\,|v|^{{r\over2}-1}\nabla|v|\,|^2dx\cr
&={4(r-2)\mu\over r^2}\intop_\Omega|\nabla|v|^{r/2}|^2dx.\cr}
$$
To examine $I_1$ we use the formula
\begin{equation}
|\nabla u|^2=|u|^2\bigg|\nabla{u\over|u|}\bigg|^2+|\nabla|u|\,|^2.
\label{2.4}
\end{equation}
Then $I_1$ takes the form
$$\eqal{
I_1&=\mu\intop_\Omega\bigg[|v|^2\bigg|\nabla{v\over|v|}\bigg|^2+|\nabla|v|\,|^2\bigg] |v|^{r-2}dx\cr
&=\mu\intop_\Omega|v|^r\bigg|\nabla{v\over|v|}\bigg|^2dx+{4\mu\over r^2}\intop_\Omega|\nabla|v|^{r/2}|^2dx.\cr}
$$
Hence,
$$
J_1={4(r-1)\over r^2}\mu\intop_\Omega|\nabla|v|^{r/2}|^2dx+\mu\intop_\Omega|v|^r \bigg|\nabla{v\over|v|}\bigg|^2dx.
$$
Next, we consider
$$\eqal{
J_2&=\nu\intop_\Omega\divv v\divv(v|v|^{r-2})dx=\nu\intop_\Omega|\divv v|^2|v|^{r-2}dx\cr
&\quad+\nu\intop_\Omega\divv vv\cdot\nabla|v|^{r-2}dx\equiv I_3+I_4,\cr}
$$
where
$$
|I_4|\le\nu(r-2)\intop_\Omega|\divv v|\,|v|^{r-2}|\nabla|v|\,|dx.
$$
Employing the above expressions in (\ref{2.3}) one gets
\begin{equation}\eqal{
&{1\over r}{d\over dt}|v|_r^r+{4(r-1)\mu\over r^2}\intop_\Omega|\nabla|v|^{r/2}|^2dx+\mu\intop_\Omega|v|^r \bigg|\nabla{v\over|v|}\bigg|^2dx\cr
&\quad+\nu\intop_\Omega|\divv v|^2|v|^{r-2}dx\le\nu(r-2)\intop_\Omega|\divv v|\,|v|^{r-2}|\nabla|v|\,|dx\cr
&=\nu(r-2)\intop_\Omega|\divv v|\,|v|^{{r\over2}-1}|v|^{{r\over2}-1}|\nabla|v|\,|dx\cr
&\le{\varepsilon\over2}\intop_\Omega|\,|v|^{{r\over2}-1}\nabla|v|\,|^2dx+ {\nu^2(r-2)^2\over2\varepsilon}\intop_\Omega|\divv v|^2|v|^{r-2}dx\cr
&={2\varepsilon\over r^2}\intop_\Omega|\nabla|v|^{r/2}|^2dx+{\nu^2(r-2)^2\over2\varepsilon}\intop_\Omega |\divv v|^2|v|^{r-2}dx.\cr}
\label{2.5}
\end{equation}
Setting $\varepsilon=(r-1)\mu$ we obtain from (\ref{2.5}) the inequality
\begin{equation}\eqal{
&{1\over r}{d\over dt}|v|_r^r+{2(r-1)\mu\over r^2}\intop_\Omega|\nabla|v|^{r/2}|^2dx+\mu\intop_\Omega|v|^r \bigg|\nabla{v\over|v|}\bigg|^2dx\cr
&\quad+\nu\intop_\Omega|\divv v|^2|v|^{r-2}dx\le{\nu^2(r-2)^2\over2(r-1)\mu}\intop_\Omega|\divv v|^2|v|^{r-2}dx.\cr}
\label{2.6}
\end{equation}
Consider the second integral on the l.h.s. of (\ref{2.6}). Omitting the coefficient we write it in the form
$$
I\equiv\intop_\Omega|\nabla|v|^{r/2}|^2dx={1\over3}|\nabla|v|^{r/2}|_2^2+{1\over3} |\nabla|v|^{r/2}|_2^2+{1\over3}|\nabla|v|^{r/2}|_2^2.
$$
Let $u=|v|^{r/2}$. Then we use the Poincar\'e inequality
$$
\intop_\Omega\bigg|u-\diagintop u\bigg|^2dx\le c_p|\nabla u|_2^2.
$$
Hence
$$
|u|_2^2\le c_p|\nabla u|_2^2+\bigg|\diagintop_\Omega udx\bigg|^2.
$$
Recalling that $u=|v|^{r/2}$ we have
$$
{1\over3}|\nabla|v|^{r/2}|_2^2\ge{1\over3c_p}\bigg(|v|_r^r- \bigg|\diagintop_\Omega|v|^{r/2}dx\bigg|^2\bigg).
$$
Therefore
$$
I\ge{1\over6c_p}|v|_r^r+{1\over6c_p}|v|_r^r+{1\over3}|\nabla|v|^{r/2}|_2^2+{1\over3} |\nabla|v|^{r/2}|_2^2-{1\over3c_p}\bigg|\diagintop|v|^{r/2}dx\bigg|^2.
$$
Let $\bar c_1=\min\big({1\over2c_P},1\big)$. Next we use the inequality
\begin{equation}
\bar c_0|v|_{3r}^r\le|\nabla|v|^{r/2}|_2^2+|v|_r^r.
\label{2.7}
\end{equation}
Then we have
$$
I\ge{1\over6c_p}|v|_r^r+{\bar c_1\bar c_0\over3}|v|_{3r}^2+{1\over3}|\nabla|v|^{r/2}|_2^2-{1\over3c_p} \bigg|\diagintop|v|^{r/2}dx\bigg|^2.
$$
Finally, we estimate the expression
$$\eqal{
&\bigg|\intop_\Omega|v|^{r/2}dx\bigg|^2=\bigg|\intop_\Omega|v|^{r/2-\mu}|v|^\mu dx\bigg|^2\le\intop_\Omega|v|^{r-2\mu}dx\intop_\Omega|v|^{2\mu}dx\cr
&\le|\Omega|^{2\mu\over r}\bigg(\intop_\Omega|v|^rdx\bigg)^{(r-2\mu)/r}\intop_\Omega|v|^{2\mu}dx\equiv J,\cr}
$$
where $|\Omega|=$ volume of $\Omega$. Setting $\mu=1$ we have
$$
J=|\Omega|^{2\over r}|v|_r^{r-2}|v|_2^2\le\varepsilon_3|v|_r^r+c/\varepsilon_3 |\Omega|\,|v|_2^r.
$$
Using the estimates in the lower bound for $I$ we obtain for sufficiently small $\varepsilon_3$ the inequality
$$\eqal{
I&\ge{1\over 10c_p}|v|_r^r+ {\bar c_1\bar c_0\over3}|v|_{3r}^r+{1\over3}|\nabla|v|^{r/2}|_2^2- c|v|_2^r\cr
&\equiv c_1|v|_r^r+c_0|v|_{3r}^r+{1\over3}|\nabla|v|^{r/2}|_2^2-c|v|_2^r.\cr}
$$
Then (\ref{2.6}) takes the form
\begin{equation}\eqal{
&{1\over r}{d\over dt}|v|_r^r+{(r-1)\mu\over r^2}c_1|v|_r^r+{(r-1)\mu c_0\over r^2}|v|_{3r}^r+{(r-1)\over3r^2}\intop_\Omega|\nabla|v|^{r/2}|^2dx\cr
&\quad+\mu\intop_\Omega|v|^r\bigg|\nabla{v\over|v|}\bigg|^2dx+\nu\intop_\Omega |\divv v|^2|v|^{r-2}dx\cr
&\le{\nu^2(r-2)^2\over2(r-1)\mu}|\divv v|_{3r/(r+1)}^2|v|_{3r}^{r-2}+c|v|_2^r\cr
&\le{\varepsilon_1^{r/(r-2)}\over r/(r-2)}|v|_{3r}^r+{1\over\varepsilon_1^{r/2}r/2} \bigg({\nu^2(r-2)^2\over2(r-1)\mu}\bigg)^{r/2}|\divv v|_{3r/(r+1)}^r+c|v|_2^r.\cr}
\label{2.8}
\end{equation}
Setting
$$
{\varepsilon_1^{r/(r-2)}\over r/(r-2)}={(r-1)\mu c_0\over 2r^2}
$$
we have that
$$
\varepsilon_1=\bigg({(r-1)\mu c_0\over2(r-2)r}\bigg)^{(r-2)/r}.
$$
Then the coefficient near $|\divv v|_{3r/(r+1)}^r$ from the r.h.s. of (\ref{2.8}) equals
\begin{equation}
(r-2)\bigg[{r-2\over(r-1)\mu}\bigg]^{r-1}\cdot{r^{r/2-2}\over c_0^{r/2-1}}\nu^r\equiv \bar c(r,\mu,c_0)\nu^r.
\label{2.9}
\end{equation}
Using the above expressions in the r.h.s. of (\ref{2.8}) and integrating the result with respect to time we obtain
\begin{equation}\eqal{
&{1\over r}|v(t)|_r^r+{(r-1)\mu c_0\over 2r^2}|v|_{3r,r,\Omega^t}^r+ {(r-1)\mu\over 3r^2}|\nabla|v|^{r/2}|_{2,\Omega^t}^2\cr
&\quad+\mu\bigg|\,|v|^{r/2}\bigg|\nabla{v\over|v|}\bigg|\,\bigg|_{2,\Omega^t}^2+ \nu\bigg|\,|\divv v|\,|v|^{{r\over2}-1}\bigg|_{2,\Omega^t}^2\cr
&\le\bar c\nu^r|\divv v|_{3r/(r+1),r,\Omega^t}^r+{1\over r}|v(0)|_r^r+A_1^r,\cr}
\label{2.10}
\end{equation}
where we used that
$$
\intop_0^t|v|_2^rdt'\le|v|_{2,\infty,\Omega^t}^{r-2}|v|_{2,\Omega^t}^2\le A_1^r.
$$
Inequality (\ref{2.10}) implies (\ref{2.2}). This concludes the proof.
\end{proof}

\begin{remark}\label{r2.3}
The first term on the r.h.s. of (\ref{2.2}) equals $|\Delta\varphi|_{3r/(r+1),r,\Omega^t}$. To examine it we first consider the interpolation
\begin{equation}
|\Delta\varphi|_{3r/(r+1)}\le c|\nabla\Delta\varphi|_2^\theta|\nabla\varphi|_2^{1-\theta},
\label{2.11}
\end{equation}
where $\theta=3/4-1/2r$, $1-\theta=1/4+1/2r$. Therefore, we have
\begin{equation}
|\Delta\varphi|_{3r/(r+1),r,\Omega^t}\le c|\nabla\Delta\varphi|_{2,\infty,\Omega^t}^{3/4-1/2r}\bigg(\intop_0^t |\nabla\varphi(t')|_2^{(1/4+1/2r)r}dt'\bigg)^{1/r},
\label{2.12}
\end{equation}
where $(1/4+1/2r)r\le 2$ for $r\le 6$. For $r=6$ (\ref{2.12}) takes the form
\begin{equation}
|\Delta\varphi|_{18/7,6,\Omega^t}\le c|\nabla\Delta\varphi|_{2,\infty,\Omega^t}^{2/3}|\nabla\varphi|_{2,\Omega^t}^{1/3}.
\label{2.13}
\end{equation}
Introduce the quantity
\begin{equation}
\Psi=\nu|\nabla\varphi|_{3,1,2,\Omega^t}.
\label{2.14}
\end{equation}
Using the imbeddings
$$\eqal{
&\|\Delta\varphi\|_{2,2,\Omega^t}\le\|\nabla\varphi\|_{3,2,\Omega^t}\le |\nabla\varphi|_{3,1,2,\Omega^t},\cr
&|\Delta\varphi_t|_{2,\Omega^t}\le\|\Delta\varphi_t\|_{1,2,\Omega^t}\le |\nabla\varphi|_{3,1,2,\Omega^t},\cr
&|\nabla\Delta\varphi|_{2,\infty,\Omega^t}\le c\|\Delta\varphi\|_{W_2^{2,1}(\Omega^t)}\le c|\nabla\varphi|_{3,1,2,\Omega^t}\cr}
$$
we obtain from (\ref{2.13}) the inequality
\begin{equation}
\nu|\Delta\varphi|_{18/7,6,\Omega^t}\le c\Psi^{2/3}\nu^{1/3}|\nabla\varphi|_{2,\Omega^t}^{1/3}\equiv I.
\label{2.15}
\end{equation}
Our aim is to find the estimate
\begin{equation}
I\le c\sum_{\alpha,\beta}{\Psi^\alpha\over\nu^\beta},
\label{2.16}
\end{equation}
where $\alpha$, $\beta$ are positive constants. This ends the Remark.
\end{remark}

To show (\ref{2.16}) we derive from (\ref{1.2}) the equation
\begin{equation}
\Delta\varphi_t-(\mu+\nu)\Delta^2\varphi=-\divv(\rot\psi\cdot\nabla v).
\label{2.17}
\end{equation}
Applying the operator $\Delta^{-1}$ we obtain
\begin{equation}\eqal{
&\varphi_t-(\mu+\nu)\Delta\varphi=-\Delta^{-1}\partial_{x_i}\partial_{x_j}(v_iv_j) +\Delta^{-1}\partial_{x_i}\partial_{x_j}(\varphi_{x_i}v_j),\cr
&\varphi|_{t=0}=\varphi(0).\cr}
\label{2.18}
\end{equation}

\begin{lemma}\label{l2.4}
Assume that $v\in L_{2p/(p-1),2/(1-\varkappa)}(\Omega^t)$, $\varkappa=3/2-3/p\in(1/2,1)$, $p\in(3,6)$, $v(0)\in L_2(\Omega)$, $\varphi(0)\in L_p(\Omega)$. Assume that there exist positive constants $c_2$, $c_3$, $c_4$ such that
$$
(\mu+\nu)^\varkappa\bigg(|\varphi(0)|_p-{\sqrt{t}\over\nu}\Psi\bigg)\ge c_2>0,\quad {c_3\over\nu^\varkappa}\le|\varphi(0)|_p\le{c_4\over\nu^\varkappa}.
$$
Then
\begin{equation}\eqal{
&|\varphi(t)|_2^2+(\mu+\nu)|\nabla\varphi|_{2,\Omega^t}^2\cr
&\le\exp\bigg[ (c|v|_{2p/(p-1),2/(1-\varkappa),\Omega^t}^{2/(1-\varkappa)})/ \bigg[(\mu+\nu)^\varkappa\min_t\bigg(|\varphi(0)|_p- {\sqrt{t}\over\nu}\Psi\bigg)\bigg]^{1/(1-\varkappa)}\bigg]\cdot\cr
&\quad\cdot\bigg[c{\Psi^2\over\nu^3}+{c_4\over\nu^{2\varkappa}}\bigg]\equiv\phi_1 \cdot\bigg(c{\Psi^2\over\nu^3}+{c_4\over\nu^{2\varkappa}}\bigg)\cr}
\label{2.19}
\end{equation}
and
$$
|\nabla\varphi|_{2,\Omega^t}^2\le\phi_1\cdot \bigg(c{\Psi^2\over\nu^4}+{c_4\over\nu^{2\varkappa+1}}\bigg)
$$
where $2\varkappa+1>2$ because $\varkappa>1/2$ and
$$
\phi_1=\exp\bigg[{c|v|_{2p/(p-1),2/(1-\varkappa),\Omega^t}^{2/(1-\varkappa)}\over [(\mu+\nu)^\varkappa\min_t(|\varphi(0)|_p-{\sqrt{t}\over\nu}\Psi)]^{1/(1-\varkappa)}}\bigg].
$$
\end{lemma}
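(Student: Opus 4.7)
I would begin from equation (\ref{2.18}), multiply it by $\varphi$, and integrate over $\Omega$. Using periodicity and the mean-zero normalization from (\ref{1.10}) to discard boundary contributions, I obtain the energy identity
$$
\frac12\frac{d}{dt}|\varphi|_2^2+(\mu+\nu)|\nabla\varphi|_2^2=I_1+I_2,
$$
where $I_1=-\int_\Omega\varphi\,\Delta^{-1}\partial_{x_i}\partial_{x_j}(v_iv_j)\,dx$ and $I_2=\int_\Omega\varphi\,\Delta^{-1}\partial_{x_i}\partial_{x_j}(\varphi_{x_i}v_j)\,dx$. Since $\Delta^{-1}\partial_{x_i}\partial_{x_j}$ is a composition of Riesz transforms, it is self-adjoint and bounded on $L^q(\Omega)$ for every $q\in(1,\infty)$; transferring it onto $\varphi$ and applying H\"older's inequality with exponents $(p,\tfrac{2p}{p-1},\tfrac{2p}{p-1})$ in $I_1$ and $(p,\tfrac{2p}{p-2},2)$ in $I_2$ produces
$$
|I_1|\le c|\varphi|_p\,|v|_{2p/(p-1)}^2,\qquad |I_2|\le c|\varphi|_p\,|v|_{2p/(p-2)}\,|\nabla\varphi|_2.
$$

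Next, I would insert the Gagliardo--Nirenberg interpolation $|\varphi|_p\le c|\nabla\varphi|_2^{\varkappa}|\varphi|_2^{1-\varkappa}$, valid for mean-zero $\varphi$ with $\varkappa=3/2-3/p$, on the right of both estimates. In $I_1$ this gives a factor $|\nabla\varphi|_2^{\varkappa}$ which I absorb into $(\mu+\nu)|\nabla\varphi|_2^2$ via Young's inequality with conjugate exponents $(2/\varkappa,2/(2-\varkappa))$, leaving a remainder proportional to $(\mu+\nu)^{-\varkappa/(2-\varkappa)}|\varphi|_2^{2(1-\varkappa)/(2-\varkappa)}|v|_{2p/(p-1)}^{4/(2-\varkappa)}$. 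The bound on $I_2$ is treated analogously, also yielding an absorbable $(\mu+\nu)|\nabla\varphi|_2^2$ piece. After grouping the coefficient of $|\varphi|_2^2$ with the factor $|v|_{2p/(p-1)}^{2/(1-\varkappa)}$, the residual differential inequality takes the shape
$$
\frac{d}{dt}|\varphi|_2^2+\tfrac{\mu+\nu}{2}|\nabla\varphi|_2^2\le\alpha(t)|\varphi|_2^2+\beta(t),
$$
whose integrating factor produces precisely $\phi_1$ once the assumption $(\mu+\nu)^{\varkappa}(|\varphi(0)|_p-\sqrt{t}\,\Psi/\nu)\ge c_2$ is invoked.

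The delicate step is replacing the a priori unknown quantity $|\varphi(t)|_p$ in the above coefficients by the given information $|\varphi(0)|_p$. For this I would exploit the reverse triangle inequality $|\varphi(t)|_p\ge|\varphi(0)|_p-\int_0^t|\varphi_{t'}|_p\,dt'$, together with the Sobolev embedding $W^1_3(\Omega)\hookrightarrow L^p(\Omega)$ (valid for $p\in(3,6)$) and the Cauchy--Schwarz inequality in time, to estimate $\int_0^t|\varphi_{t'}|_p\,dt'\le c\sqrt{t}\,|\nabla\varphi|_{3,2,\Omega^t}\le c\sqrt{t}\,\Psi/\nu$. This yields the lower bound $|\varphi(t)|_p\ge|\varphi(0)|_p-c\sqrt{t}\,\Psi/\nu$, which together with the standing hypothesis keeps the denominator in $\phi_1$ bounded below uniformly in $t$.

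Finally, I would invoke Gronwall's inequality. The initial seed is controlled via $|\varphi(0)|_2\le c|\varphi(0)|_p\le c_4/\nu^{\varkappa}$, which supplies the term $c_4/\nu^{2\varkappa}$; the contribution $c\Psi^2/\nu^3$ comes out of integrating $\beta(t)$ in time and using the definition $\Psi=\nu|\nabla\varphi|_{3,1,2,\Omega^t}$ (paired with the factor $(\mu+\nu)^{-1}\ge c\nu^{-1}$). The second stated bound on $|\nabla\varphi|_{2,\Omega^t}^2$ is then obtained by dividing through by $\mu+\nu$ and using $\mu+\nu\ge c\nu$. The principal obstacle is precisely the matching of Young's exponents so that the exponential prefactor has the advertised form with exponents $2/(1-\varkappa)$ on $|v|_{2p/(p-1)}$ and $1/(1-\varkappa)$ in the denominator, while simultaneously keeping the polynomial correction exactly equal to $c\Psi^2/\nu^3+c_4/\nu^{2\varkappa}$.
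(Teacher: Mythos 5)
Your starting point coincides with the paper's: multiply (2.18) by $\varphi$, obtain the identity for $\tfrac12\frac{d}{dt}|\varphi|_2^2+(\mu+\nu)|\nabla\varphi|_2^2$, use the reverse triangle inequality to get $|\varphi(t)|_p\ge|\varphi(0)|_p-\sqrt{t}\,\Psi/\nu$, and feed $|\varphi(0)|_2\le c_4/\nu^{\varkappa}$ into the Gronwall seed. But the central step does not close as you describe it. The prefactor $\phi_1$ has $[(\mu+\nu)^{\varkappa}\min_t(|\varphi(0)|_p-\sqrt{t}\Psi/\nu)]^{1/(1-\varkappa)}$ in the \emph{denominator}, so a quantity of the form $|\varphi(t)|_p$ must survive in the denominator of the Gronwall coefficient. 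The paper achieves this by writing $|I_1|\le\alpha|\varphi|_p^2$ with $\alpha=|\bar D_1|_{p/(p-1)}/|\varphi|_p$ and then using the \emph{additive} interpolation $|\varphi|_p\le\varepsilon^{1/\varkappa}|\nabla\varphi|_2+c\varepsilon^{-1/(1-\varkappa)}|\varphi|_2$ with $\varepsilon$ chosen so that $\alpha\varepsilon^{2/\varkappa}=(\mu+\nu)/k$; the coefficient of $|\varphi|_2^2$ then becomes $ck^{\varkappa/(1-\varkappa)}|v|_{2p/(p-1)}^{2/(1-\varkappa)}/[(\mu+\nu)^{\varkappa}|\varphi|_p]^{1/(1-\varkappa)}$, and this is exactly where the lower bound on $|\varphi(t)|_p$ and the hypothesis $|\varphi(0)|_p\ge c_3/\nu^{\varkappa}$ are consumed. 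Your multiplicative Gagliardo--Nirenberg step $|\varphi|_p\le c|\nabla\varphi|_2^{\varkappa}|\varphi|_2^{1-\varkappa}$ eliminates $|\varphi|_p$ altogether, and the remainder you obtain, $(\mu+\nu)^{-\varkappa/(2-\varkappa)}|\varphi|_2^{2(1-\varkappa)/(2-\varkappa)}|v|^{4/(2-\varkappa)}$, carries $|\varphi|_2$ to a power strictly less than $2$: a further Young step turns it into $\epsilon|\varphi|_2^2+C_\epsilon(\mu+\nu)^{-\varkappa}|v|^{4}$, i.e.\ the $|v|$-dependence lands in the forcing term at order $\nu^{-\varkappa}$ rather than in the Gronwall coefficient. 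Since $\nu^{-\varkappa}$ is much larger than $\nu^{-2\varkappa}+\Psi^2/\nu^3$ for large $\nu$, this does not reproduce (2.19); and your later paragraph about ``replacing $|\varphi(t)|_p$ in the coefficients'' has nothing to attach to, because after your interpolation the coefficients no longer contain $|\varphi(t)|_p$.

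A second, smaller gap concerns $I_2$. Your H\"older split with exponents $(p,2p/(p-2),2)$ requires control of $|v|_{2p/(p-2)}$ with $2p/(p-2)\in(3,6)$, a norm that is not yet available at this stage: the bound $|v|_6\le D_1$ of (2.27) is derived \emph{from} Lemma 2.4 in Remark 2.5, so invoking it here would be circular, and the energy estimate (2.1) only gives $L_2$ in time for $\|v\|_1$. The paper instead integrates by parts once more in $I_2$, uses the embedding $W_{6/5}^1(\Omega)\hookrightarrow L_2(\Omega)$ together with the Poincar\'e inequality to get $|I_2|\le\frac{\mu+\nu}{k}|\nabla\varphi|_2^2+\frac{ck}{\mu+\nu}|\varphi_x|_3^2|v|_2^2$, which needs only $|v|_{2,\infty,\Omega^t}\le A_1$ and $|\varphi_x|_{3,2,\Omega^t}\le\Psi/\nu$, and is precisely the source of the term $c\Psi^2/\nu^3$ in (2.19). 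You would need to replace your split by this one (or something equivalently subcritical with respect to the energy class) for the argument to go through.
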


\begin{proof}
Multiplying (\ref{2.18}) by $\varphi$ and integrating over $\Omega$ yield
\begin{equation}\eqal{
&{1\over2}{d\over dt}|\varphi|_2^2+(\mu+\nu)|\nabla\varphi|_2^2=\intop_\Omega (-\Delta^{-1}\partial_{x_i}\partial_{x_j}(v_iv_j)\varphi) dx\cr
&\quad+\intop_\Omega\Delta^{-1}\partial_{x_i}\partial_{x_j}(\varphi_{x_i}v_j)\varphi dx\equiv\intop_\Omega\bar D_1\varphi dx+\intop_\Omega\bar D_2\varphi dx\cr
&\equiv I_1+I_2.\cr}
\label{2.20}
\end{equation}
First we examine $I_1$. By the H\"older inequality we have
$$
|I_1|\le|\bar D_1|_{p/(p-1)}|\varphi|_p={|\bar D_1|_{p/(p-1)}|\varphi|_p^2\over |\varphi|_p}\equiv I_1,
$$
where $1<p<6$. Let $\alpha={|\bar D_1|_{p/(p-1)}\over|\varphi|_p}$. Then we use the interpolation
$$
I_1^{1/2}=\alpha^{1/2}|\varphi|_p\le\alpha^{1/2}(\varepsilon^{1/\varkappa} |\nabla\varphi|_2+c\varepsilon^{-1/(1-\varkappa)}|\varphi|_2),
$$
where $\varkappa=3/2-3/p$, $p\in(3,6)$. Setting $\varepsilon^{1/\varkappa}\alpha^{1/2}=\big({\mu+\nu\over k}\big)^{1/2}$, $k\in\N$, we have $\varepsilon=\big({\mu+\nu\over\alpha k}\big)^{\varkappa/2}$. Then
$$
\alpha^{1/2}\varepsilon^{-1/(1-\varkappa)}={(\alpha k^\varkappa)^{1/2(1-\varkappa)}\over(\mu+\nu)^{\varkappa/2(1-\varkappa)}}.
$$
Therefore,
\begin{equation}\eqal{
I_1&\le{1\over k}(\mu+\nu)|\nabla\varphi|_2^2+{c(k^\varkappa\alpha)^{1/(1-\varkappa)}\over (\mu+\nu)^{\varkappa/(1-\varkappa)}}|\varphi|_2^2\cr
&={1\over k}(\mu+\nu)|\nabla\varphi|_2^2+{ck^{\varkappa/(1-\varkappa)}|\bar D_1|_{p/(p-1)}^{1/(1-\varkappa)}\over[(\mu+\nu)^\varkappa|\varphi|_p]^{1/(1-\varkappa)}} |\varphi|_2^2,\cr}
\label{2.21}
\end{equation}
where $|\bar D_1|_q\le c\sum_{i,j=1}^3|v_iv_j|_q$ for any $q\in(1,\infty)$.

\noindent
By the H\"older and Young inequalities we get
\begin{equation}\eqal{
|I_2|&=\bigg|\intop_\Omega\Delta^{-1}\partial_{x_j}(\varphi_{x_i}v_j) \varphi_{x_i}dx\bigg|\cr
&\le{\mu+\nu\over k}|\nabla\varphi|_2^2+{ck\over\mu+\nu}\sum_{i,j=1}^3 |\Delta^{-1}\partial_{x_j}(\varphi_{x_i}v_j)|_2^2\cr
&\le{\mu+\nu\over k}|\nabla\varphi|_2^2+{ck\over\mu+\nu}\sum_{i,j=1}^3\|\Delta^{-1}\partial_{x_j} (\varphi_{x_i}v_j)\|_{1,6/5}^2\cr
&\le{\mu+\nu\over k}|\nabla\varphi|_2^2+{ck\over\mu+\nu}\sum_{i,j=1}^3(|\nabla\Delta^{-1}\partial_{x_j} (\varphi_{x_i}v_j)|_{6/5}^2+|\Delta^{-1}\partial_{x_j}(\varphi_{x_i}v_j)|_{6/5}^2)\cr
&\le{\mu+\nu\over k}|\nabla\varphi|_2^2+{ck\over\mu+\nu}\sum_{i,j=1}^3|\nabla\Delta^{-1}\partial_{x_j} (\varphi_{x_i}v_j)|_{6/5}^2\cr
&\le{\mu+\nu\over k}|\nabla\varphi|_2^2+{ck\over\mu+\nu}\sum_{i,j=1}^3 |\varphi_{x_i}v_j|_{6/5}^2\cr
&\le{\mu+\nu\over k}|\nabla\varphi|_2^2+{ck\over\mu+\nu}|\varphi_x|_3^2|v|_2^2,\cr}
\label{2.22}
\end{equation}
where in the fourth inequality the Poincar\'e inequality is used.

\noindent
Using (\ref{2.21}) and (\ref{2.22}) in (\ref{2.20}), assuming that $k=4$ and integrating the result with respect to time yield
\begin{equation}\eqal{
&|\varphi(t)|_2^2+(\mu+\nu)|\nabla\varphi|_{2,\Omega^t}^2\le\exp\bigg( {|v|_{2p/(p-1),2/(1-\varkappa),\Omega^t}^{2/(1-\varkappa)}\over [(\mu+\nu)^\varkappa\min_t|\varphi(t)|_p]^{1/(1-\varkappa)}}\bigg)\cdot\cr
&\quad\cdot\bigg[{c\over\mu+\nu}|\varphi_x|_{3,2,\Omega^t}^2|v|_{2,\infty,\Omega^t}^2+ |\varphi(0)|_2^2\bigg].\cr}
\label{2.23}
\end{equation}
From Lemma \ref{l2.1} we have that $|v|_{2,\infty,\Omega^t}\le A_1$. Moreover $|\varphi_x|_{3,2,\Omega^t}\le{\Psi\over\nu}$. To guarantee that the argument of exp is finite we consider
$$
\varphi(t)=\varphi(0)+\intop_0^t\varphi_{t'}dt'
$$
so
$$\eqal{
|\varphi(t)|_p&\ge|\varphi(0)|_p-\bigg|\intop_0^t\varphi_{t'}dt'\bigg|_p\ge |\varphi(0)|_p-\intop_0^t|\varphi_{t'}|_pdt'\cr
&\ge|\varphi(0)|_p-{\sqrt{t}\over\nu}\Psi.\cr}
$$
To have the argument of exp finite we assume existence of a positive constant $c_2$ such that
\begin{equation}
(\mu+\nu)^\varkappa\bigg(|\varphi(0)|_p-{\sqrt{t}\over\nu}\Psi\bigg)\ge c_2.
\label{2.24}
\end{equation}
Such constant exists at least for the local solution. Moreover, we need
\begin{equation}
{c_3\over\nu^\varkappa}\le|\varphi(0)|_p\le{c_4\over\nu^\varkappa},
\label{2.25}
\end{equation}
where $c_3$, $c_4$ are positive constants. The above considerations imply (\ref{2.19}) and conclude the proof.
\end{proof}

\begin{remark}\label{r2.5}
Using (\ref{2.19}) in (\ref{2.15}) yields
\begin{equation}
\nu|\Delta\varphi|_{18/7,6,\Omega^t}\le c\Psi^{2/3}\phi_1\cdot\bigg({\Psi^{1/3}\over\nu^{1/3}}+ {1\over\nu^{(\varkappa-1/2)/3}}\bigg).
\label{2.26}
\end{equation}
Using (\ref{2.26}) in (\ref{2.2}) gives
\begin{equation}\eqal{
&|v(t)|_6+|v|_{18,6,\Omega^t}+|\nabla|v|^3|_{2,\Omega^t}^{1/3}\cr
&\le c_1\Psi^{2/3}\phi_1\cdot(\Psi^{1/3}/\nu^{1/3}+1/\nu^{(\varkappa-1/2)/3})+ c_1(|v(0)|_6+A_1)\cr
&\equiv D_1,\cr}
\label{2.27}
\end{equation}
where
$$
\phi_1=\exp\bigg[{c|v|_{2p/(p-1),2/(1-\varkappa),\Omega^t}^{2/(1-\varkappa)}\over [(\mu+\nu)^\varkappa\min_t(|\varphi(0)|_p-{\sqrt{t}\over\nu}\Psi)]^{1/(1-\varkappa)}}\bigg]
$$
and $c_1$ is defined in (\ref{2.2}).
\end{remark}

\begin{lemma}\label{l2.6}
Assume that $v_t(0)\in L_2(\Omega)$ and the assumptions of Lemmas \ref{l2.1} and \ref{l2.4} hold. Let $D_1$, defined by (\ref{2.27}), be finite.\\
Then
\begin{equation}\eqal{
&|v_t(t)|_2^2+\mu\|v_t\|_{1,2,\Omega^t}^2+\nu|\Delta\varphi_t|_{2,\Omega^t}^2\cr
&\le|v_t(0)|_2^2\exp(D_1^2A_1^2)\equiv D_2^2.\cr}
\label{2.28}
\end{equation}
\end{lemma}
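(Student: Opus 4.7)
The plan is to differentiate (\ref{1.7}) with respect to $t$, test with $v_t$ in $L_2(\Omega)$, and close the resulting energy identity by Gronwall, using the bound $|v|_{6,\infty,\Omega^t}\le D_1$ from (\ref{2.27}) together with the basic energy estimate (\ref{2.1}).

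Differentiating (\ref{1.7}) in $t$ yields
$$
v_{tt}+(\rot\psi)_t\cdot\nabla v+\rot\psi\cdot\nabla v_t-\mu\Delta v_t-\nu\nabla\divv v_t=0.
$$
Testing against $v_t$ and integrating over $\Omega$, the transport term $\intop_\Omega\rot\psi\cdot\nabla v_t\cdot v_t\,dx$ vanishes since $\divv\rot\psi=0$ and the boundary conditions are periodic, while the viscous terms give $\mu|\nabla v_t|_2^2+\nu|\Delta\varphi_t|_2^2$ (using $\divv v_t=\Delta\varphi_t$ from (\ref{1.6})). What remains is
$$
\tfrac12\tfrac{d}{dt}|v_t|_2^2+\mu|\nabla v_t|_2^2+\nu|\Delta\varphi_t|_2^2=-\intop_\Omega(\rot\psi)_t\cdot\nabla v\cdot v_t\,dx\equiv K.
$$

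The main work is to bound $K$. I would integrate by parts once, using $\divv(\rot\psi)_t=0$, to obtain $K=\intop_\Omega v\cdot((\rot\psi)_t\cdot\nabla)v_t\,dx$, and then apply H\"older together with the interpolation $|f|_3\le c|f|_2^{1/2}|\nabla f|_2^{1/2}$ to $f=(\rot\psi)_t$. The Helmholtz orthogonality in the periodic box (from $v=\nabla\varphi+\rot\psi$, $\divv\rot\psi=0$) yields $|(\rot\psi)_t|_2\le|v_t|_2$ and $|\nabla(\rot\psi)_t|_2\le|\nabla v_t|_2$, hence
$$
|K|\le c|v|_6|(\rot\psi)_t|_3|\nabla v_t|_2\le c|v|_6|v_t|_2^{1/2}|\nabla v_t|_2^{3/2}\le\tfrac{\mu}{2}|\nabla v_t|_2^2+c|v|_6^4|v_t|_2^2
$$
after Young's inequality (with constants depending on $\mu$). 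Absorbing the $|\nabla v_t|_2^2$ term into the left-hand side, integrating in time and invoking Gronwall gives
$$
|v_t(t)|_2^2+\mu\|v_t\|_{1,2,\Omega^t}^2+\nu|\Delta\varphi_t|_{2,\Omega^t}^2\le|v_t(0)|_2^2\exp\Bigl(c\intop_0^t|v|_6^4\,dt'\Bigr).
$$

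The integral in the exponent is controlled as $\intop_0^t|v|_6^4\,dt'\le|v|_{6,\infty,\Omega^t}^2|v|_{6,2,\Omega^t}^2\le cD_1^2A_1^2$, where (\ref{2.27}) yields the first factor and the Sobolev embedding $H^1\hookrightarrow L_6$ combined with (\ref{2.1}) bounds the second factor by $cA_1/\sqrt{\mu}$. This produces the claimed estimate (\ref{2.28}). The step I expect to be most delicate is the bookkeeping of the convective term $K$: the interpolation/Young balance must be chosen so that exactly a coefficient of the form $|v|_6^4|v_t|_2^2$ (and no surplus copy of $|\nabla v_t|_2^2$ that cannot be absorbed) survives, and one must use the Helmholtz orthogonality on the periodic cell to dominate $(\rot\psi)_t$ by $v_t$ both in $L_2$ and in the homogeneous $\dot H^1$ seminorm. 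Everything else (vanishing of the transport term, identification $\divv v_t=\Delta\varphi_t$, and the final Gronwall application) is routine.
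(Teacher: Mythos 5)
Your proposal is correct and follows essentially the same route as the paper: differentiate (\ref{1.7}) in $t$, test with $v_t$, kill the transport term by $\divv\rot\psi=0$, move the derivative off $v$ in the remaining term, interpolate $|\rot\psi_t|_3$ between $L_2$ and $\dot H^1$ (dominating $\rot\psi_t$ by $v_t$ via the Helmholtz decomposition), absorb $\varepsilon|\nabla v_t|_2^2$, and apply Gronwall with $\intop_0^t|v|_6^4\,dt'\le D_1^2\cdot cA_1^2/\mu$ from (\ref{2.27}) and (\ref{2.1}). The only cosmetic difference is that the paper performs the interpolation and Young's inequality in two successive steps ((\ref{2.30})--(\ref{2.31})) rather than in one, so there is nothing substantive to add.
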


\begin{proof}
Differentiate (\ref{1.7}) with respect to $t$, multiply by $v_t$ and integrate over $\Omega$. Then we have
\begin{equation}\eqal{
&{1\over2}{d\over dt}|v_t|_2^2+\mu|\nabla v_t|_2^2+\nu|\divv v_t|_2^2=-\intop_\Omega\rot\psi\cdot\nabla v_t\cdot v_tdx\cr
&\quad-\intop_\Omega\rot\psi_t\cdot\nabla v\cdot v_tdx\equiv J_1+J_2.\cr}
\label{2.29}
\end{equation}
Integration by parts implies that $J_1=0$. Next
$$
|J_2|=\bigg|\intop_\Omega\rot\psi_t\cdot\nabla v_t\cdot vdx\bigg|\le\varepsilon|\nabla v_t|_2^2+c/\varepsilon|\rot\psi_t|_3^2|v|_6^2.
$$
Using the estimate in (\ref{2.29}) yields
\begin{equation}\eqal{
&{d\over dt}|v_t|_2^2+\mu\|v_t\|_1^2+\nu|\Delta\varphi_t|_2^2\le c|\rot\psi_t|_3^2|v|_6^2\cr
&\le c|\nabla\rot\psi_t|_2|\rot\psi_t|_2|v|_6^2\le\varepsilon|\nabla v_t|_2^2+c/\varepsilon|\rot\psi_t|_2^2|v|_6^4\cr}
\label{2.30}
\end{equation}
Continuing, we have
\begin{equation}
{d\over dt}|v_t|_2^2+\mu\|v_t\|_1^2+\nu|\Delta\varphi_t|_2^2\le c|v_t|_2^2|v|_6^4.
\label{2.31}
\end{equation}
Integration with respect to time yields
$$\eqal{
&|v_t(t)|_2^2+\exp\bigg(c\intop_0^t|v(t')|_6^4dt'\bigg)\intop_0^t (\mu\|v_{t'}(t')\|_1^2+\nu|\Delta\varphi_{t'}(t')|_2^2)\cdot\cr
&\quad\cdot\exp\bigg(-c\intop_0^{t'}|v(t'')|_6^4dt''\bigg)dt'\le|v_t(0)|_2^2 \exp \bigg(c\intop_0^t|v(t')|_6^4dt'\bigg).\cr}
$$
Simplifying we get (\ref{2.28}). This concludes the proof.
\end{proof}

\section{Estimates and existence}\label{s3}

First we derive estimates for solutions to (\ref{1.7}) by applying the energy method.

\begin{lemma}\label{l3.1}
Assume that $A_1$, $D_1$ are finite, $\nabla\varphi(0)$, $\rot\psi(0)\in L_2(\Omega)$.\\
Then
\begin{equation}\eqal{
&|\nabla\varphi(t)|_2^2+{1\over\nu}|\rot\psi(t)|_2^2+ \mu\bigg( |\nabla^2\varphi|_{2,\Omega^t}^2+{1\over\nu}|\nabla\rot\psi|_{2,\Omega^t}^2\bigg)\cr
&\quad+\nu|\Delta\varphi|_{2,\Omega^t}^2\le{c\over\nu}A_1^2D_1^4+ |\nabla\varphi(0)|_2^2+{1\over\nu}|\rot\psi(0)|_2^2.\cr}
\label{3.1}
\end{equation}
\end{lemma}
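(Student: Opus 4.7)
My plan is to decouple problem (\ref{1.7}) into two separate energy identities — one for the gradient part $\nabla\varphi$ and one for the divergence-free part $\rot\psi$ — and then combine them with different weights, so that the $1/\nu$ factors in (\ref{3.1}) emerge naturally from weighting the $\rot\psi$-identity by $1/\nu$.

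For the gradient component I would test the already-derived $\varphi$-equation (\ref{2.17}) against $\varphi$. Integration by parts under the periodic zero-mean condition (\ref{1.4}) yields
$$\tfrac12\tfrac{d}{dt}|\nabla\varphi|_2^2 + (\mu+\nu)|\Delta\varphi|_2^2 = -\intop_\Omega(\rot\psi\cdot\nabla v)\cdot\nabla\varphi\,dx.$$
For the solenoidal component I would apply the Helmholtz projector $P$ onto divergence-free vector fields to (\ref{1.7}). Since $Pv=\rot\psi$, $P\nabla\divv v=0$ and $P\Delta v=\Delta\rot\psi$, the projected system reads $(\rot\psi)_t+P(\rot\psi\cdot\nabla v)-\mu\Delta\rot\psi=0$, and testing against $\rot\psi$ gives
$$\tfrac12\tfrac{d}{dt}|\rot\psi|_2^2 + \mu|\nabla\rot\psi|_2^2 = -\intop_\Omega(\rot\psi\cdot\nabla v)\cdot\rot\psi\,dx.$$
Adding the first identity to $(1/\nu)$ times the second reproduces the weighted left-hand side of (\ref{3.1}); I use here that $|\nabla^2\varphi|_2=|\Delta\varphi|_2$ in the periodic zero-mean setting, so $(\mu+\nu)|\Delta\varphi|_2^2$ splits cleanly into $\mu|\nabla^2\varphi|_2^2+\nu|\Delta\varphi|_2^2$.

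The nonlinear terms are handled by integration by parts, which is legal because $\divv\rot\psi=0$. The first integral reduces to $|\intop\rot\psi_i v_j\varphi_{,ji}\,dx|\le c|\rot\psi|_3|v|_6|\Delta\varphi|_2$. In the second, the $\rot\psi$-contribution from $v=\nabla\varphi+\rot\psi$ vanishes by the standard skew-symmetry for div-free transport, leaving a bound of the form $c|v|_6|\rot\psi|_3|\nabla\rot\psi|_2$. Combining the interpolation $|\rot\psi|_3\le c|\rot\psi|_2^{1/2}|\nabla\rot\psi|_2^{1/2}\le cA_1^{1/2}|\nabla\rot\psi|_2^{1/2}$ (from Lemma \ref{l2.1}) with $|v|_6\le D_1$ (from (\ref{2.27})), the second nonlinearity is bounded by $cA_1^{1/2}D_1|\nabla\rot\psi|_2^{3/2}$; a single Young inequality ($p=4/3$, $q=4$) with scale $\varepsilon\sim\mu$ absorbs $|\nabla\rot\psi|_2^2$ into the dissipation $(\mu/\nu)|\nabla\rot\psi|_2^2$ and leaves a residue of order $(c/\nu)A_1^2D_1^4$. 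The first nonlinearity becomes $cA_1^{1/2}D_1|\nabla\rot\psi|_2^{1/2}|\Delta\varphi|_2$ and requires a two-step Young: first $\varepsilon_1\sim\nu$ matched to $(\mu+\nu)|\Delta\varphi|_2^2$, which peels off $|\Delta\varphi|_2^2$ but leaves a term linear in $|\nabla\rot\psi|_2$; then $\varepsilon_2\sim\mu/\nu$ matched to $(\mu/\nu)|\nabla\rot\psi|_2^2$, producing a residue of the same order $(c/\nu)A_1^2D_1^4$. Integration in time from $0$ to $t$ then delivers (\ref{3.1}).

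The delicate point is this first nonlinearity: after interpolation it must simultaneously feed two dissipation terms, $(\mu+\nu)|\Delta\varphi|_2^2$ and $(\mu/\nu)|\nabla\rot\psi|_2^2$, whose natural scales differ by a factor of order $\nu^2$. A single Young inequality therefore cannot close the estimate with the advertised $c/\nu$ residue — any one-shot splitting gives only $c/\mu$. The two-step Young with the specific scales $\varepsilon_1\sim\nu$ and $\varepsilon_2\sim\mu/\nu$ is the unique choice that forces the residual constants from both nonlinearities to collapse to $(c/\nu)A_1^2D_1^4$, and getting these scales to match while leaving enough room in the dissipation to absorb both contributions is the real technical content of the lemma.
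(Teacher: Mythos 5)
Your setup coincides with the paper's: the same two energy identities (testing against $\nabla\varphi$, resp.\ $\rot\psi$, which is what your $\Delta^{-1}$-reduction and Helmholtz projection amount to), the same weighting by $1/\nu$, the same integrations by parts, and the same interpolation $|\rot\psi|_3\le c|\rot\psi|_2^{1/2}|\nabla\rot\psi|_2^{1/2}$. The gap is in the order of operations at the very end. You apply the Young inequality \emph{pointwise in time}, bounding $|\rot\psi(t)|_2$ by the sup-norm estimate $A_1$ from Lemma \ref{l2.1}; the residue you obtain, $(c/\nu)A_1^2D_1^4$, is then a constant \emph{per unit time}, so integrating from $0$ to $t$ produces $(ct/\nu)A_1^2D_1^4$, not the time-uniform bound $(c/\nu)A_1^2D_1^4$ asserted in (\ref{3.1}). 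The paper avoids this by first integrating the differential inequality in time, arriving at ${c\over\nu}D_1^2|\rot\psi|_{3,2,\Omega^t}^2$, and only then interpolating in the \emph{space-time} norms, $|\rot\psi|_{3,2,\Omega^t}^2\le|\nabla\rot\psi|_{2,\Omega^t}|\rot\psi|_{2,\Omega^t}$, followed by Young; the crucial input is the dissipation part of Lemma \ref{l2.1}, namely $|\rot\psi|_{2,\Omega^t}^2=\int_0^t|\rot\psi|_2^2\,dt'\le cA_1^2/\mu$, which is bounded uniformly in $t$. Your argument uses only the weaker pointwise information $|\rot\psi(t)|_2\le A_1$ and therefore cannot recover a $t$-independent right-hand side. (For the application one has $t\le\nu^\beta$ with $\beta<1$, so the damage is limited, but the lemma as stated is not proved.)

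A secondary remark: the elaborate two-step Young with scales $\varepsilon_1\sim\nu$, $\varepsilon_2\sim\mu/\nu$ that you present as the crux is not needed. Since $|\nabla^2\varphi|_2=|\Delta\varphi|_2$ in the periodic mean-zero setting, the term $\varepsilon|\Delta\varphi|_2^2$ coming from the first nonlinearity is absorbed in one shot into $\nu|\Delta\varphi|_2^2$ with $\varepsilon\sim\nu$, leaving ${c\over\nu}|v|_6^2|\rot\psi|_3^2$ --- a term of exactly the same shape as the one produced by the $\rot\psi$-equation after weighting by $1/\nu$. Both are then handled by the single time-integrated interpolation described above. Rewriting your closing step in that form repairs the proof.
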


\begin{proof}
Multiply (\ref{1.7}) by $\nabla\varphi$ and integrate over $\Omega$. Then we have
\begin{equation}
{1\over2}{d\over dt}|\nabla\varphi|_2^2+\intop_\Omega\rot\psi\cdot\nabla v\cdot\nabla\varphi dx+\mu|\nabla^2\varphi|_2^2+\nu|\Delta\varphi|_2^2=0.
\label{3.2}
\end{equation}
Integration by parts in the second term yields
$$
\bigg|\intop_\Omega\rot\psi\cdot v\cdot\nabla^2\varphi dx\bigg|\le\varepsilon|\nabla^2\varphi|_2^2+c/\varepsilon|v|_6^2|\rot\psi|_3^2.
$$
Using this in (\ref{3.2}) implies
\begin{equation}\eqal{
&{d\over dt}|\nabla\varphi|_2^2+\mu|\nabla^2\varphi|_2^2+\nu|\Delta\varphi|_2^2\le {c\over\nu}|\rot\psi|_3^2|v|_6^2\cr
&\le{c\over\nu}|\rot\psi|_3^2D_1^2.\cr}
\label{3.3}
\end{equation}
Inegrating with respect to time gives
\begin{equation}
|\nabla\varphi|_2^2+\mu|\nabla^2\varphi|_{2,\Omega^t}^2+ \nu|\Delta\varphi|_{2,\Omega^t}^2\le{c\over\nu}|\rot\psi|_{3,2,\Omega^t}^2D_1^2+ |\nabla\varphi(0)|_2^2.
\label{3.4}
\end{equation}
Multiply (\ref{1.7}) by $\rot\psi$ and integrate over $\Omega$. Then we obtain
\begin{equation}\eqal{
&{1\over2}{d\over dt}|\rot\psi|_2^2+\mu|\nabla\rot\psi|_2^2=-\intop_\Omega\rot\psi\cdot\nabla v\cdot\rot\psi dx\cr
&=\intop_\Omega\rot\psi\cdot\nabla\rot\psi\cdot vdx\cr}
\label{3.5}
\end{equation}
and the r.h.s. is bounded by
$$
\varepsilon|\nabla\rot\psi|_2^2+c/\varepsilon|\rot\psi|_3^2|v|_6^2.
$$
Using this in (\ref{3.5}) and integrating the result with respect to time we obtain
\begin{equation}
|\rot\psi(t)|_2^2+\mu|\nabla\rot\psi|_{2,\Omega^t}^2\le c|\rot\psi|_{3,2,\Omega^t}^2D_1^2+|\rot\psi(0)|_2^2.
\label{3.6}
\end{equation}
Multiplying (\ref{3.6}) by $1/\nu$ and adding to (\ref{3.4}) give
\begin{equation}\eqal{
&|\nabla\varphi(t)|_2^2+{1\over\nu}|\rot\psi(t)|_2^2+\mu \bigg(|\nabla^2\varphi|_{2,\Omega^t}^2+ {1\over\nu}|\nabla\rot\psi|_{2,\Omega^t}^2\bigg)\cr
&\quad+\nu|\Delta\varphi|_{2,\Omega^t}^2\le{c\over\nu}|\rot\psi|_{3,2,\Omega^t}^2 D_1^2+|\nabla\varphi(0)|_2^2+{1\over\nu}|\rot\psi(0)|_2^2.\cr}
\label{3.7}
\end{equation}
In view of interpolation the first term on the r.h.s. of (\ref{3.7}) is bounded by
$$
{c\over\nu}|\nabla\rot\psi|_{2,\Omega^t}|\rot\psi|_{2,\Omega^t}D_1^2\le {\varepsilon\over\nu}|\nabla\rot\psi|_{2,\Omega^t}^2
+{c\over\nu\varepsilon}|\rot\psi|_{2,\Omega^t}^2D_1^4
$$
where the last expression is bounded by
$$
{c\over\nu\varepsilon}|v|_{2,\Omega^t}^2D_1^4\le{c\over\nu\varepsilon}A_1^2D_1^4,
$$
where Lemma \ref{l2.1} is exploited.

\noindent
Using the estimates in (\ref{3.7}) we derive (\ref{3.1}). This concludes the proof.
\end{proof}

\begin{lemma}\label{l3.2}
Assume that $\nu\in(0,\infty)$, $D_1$, $D_2$, $\chi_0$, $\Psi$ are finite, $\nabla\varphi_t(0),\break \rot\psi_t(0)\in L_2(\Omega)$. Then
\begin{equation}\eqal{
&|\nabla\varphi_t(t)|_2^2+{1\over\nu}|\rot\psi_t(t)|_2^2+\mu\bigg( |\nabla^2\varphi_t|_{2,\Omega^t}^2+{1\over\nu}|\nabla\rot\psi_t|_{2,\Omega^t}^2\bigg)\cr
&\quad+\nu|\Delta\varphi_t|_{2,\Omega^t}^2\le{c\over\nu}\bigg[\bigg(D_1^2+ {\chi_0^2\over\nu}\bigg)\bigg(D_2^2+{\Psi^2\over\nu^2}\bigg)+D_2^2\bigg(D_1^2+ {\chi_0^2\over\nu}\bigg)\bigg]\cr
&\quad+|\nabla\varphi_t(0)|_2^2+{1\over\nu}|\rot\psi_t(0)|_2^2\equiv{c\over\nu}\phi_1 \bigg(D_1,D_2,{\chi_0\over\sqrt{\nu}},{\Psi\over\nu}\bigg)+|\nabla\varphi_t(0)|_2^2\cr
&\quad+{1\over\nu}|\rot\psi_t(0)|_2^2.\cr}
\label{3.8}
\end{equation}
\end{lemma}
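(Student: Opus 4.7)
The plan is to mirror the proof of Lemma~\ref{l3.1} applied to the time-differentiated system. Differentiating (\ref{1.7}) with respect to~$t$ yields
\begin{equation*}
v_{tt}+\rot\psi_t\cdot\nabla v+\rot\psi\cdot\nabla v_t-\mu\Delta v_t-\nu\nabla\divv v_t=0,
\end{equation*}
with $v_t=\nabla\varphi_t+\rot\psi_t$ by the Helmholtz decomposition. I test this equation separately against $\nabla\varphi_t$ and against $\rot\psi_t$.

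Testing with $\nabla\varphi_t$: the term $\int_\Omega\rot\psi_{tt}\cdot\nabla\varphi_t\,dx$ vanishes after integration by parts (using $\divv\rot\psi_{tt}=0$ and periodic boundary conditions), so the time-derivative piece reduces to $\tfrac12\tfrac{d}{dt}|\nabla\varphi_t|_2^2$. Integration by parts on the viscous terms, again invoking the orthogonality $\int\rot W\cdot\nabla\phi\,dx=0$, produces $\mu|\nabla^2\varphi_t|_2^2+\nu|\Delta\varphi_t|_2^2$. The two convective contributions $\int_\Omega\rot\psi_t\cdot\nabla v\cdot\nabla\varphi_t\,dx$ and $\int_\Omega\rot\psi\cdot\nabla v_t\cdot\nabla\varphi_t\,dx$ are integrated by parts once to shift $\nabla$ off~$v$ (respectively off~$v_t$), estimated in an $L^3\cdot L^6\cdot L^2$ Hölder pattern, and the $L^2$-factor $|\nabla^2\varphi_t|_2$ is absorbed into the viscous term by Young's inequality. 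Testing with $\rot\psi_t$ is parallel: the $\nu\nabla\divv v_t$-term drops out against the solenoidal $\rot\psi_t$, the viscous piece gives $\mu|\nabla\rot\psi_t|_2^2$, and the two convective integrals are handled the same way, the absorbable quadratic now being $\varepsilon|\nabla\rot\psi_t|_2^2$.

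Multiply the $\rot\psi_t$ inequality by $1/\nu$ and add to the $\nabla\varphi_t$ inequality, choosing $\varepsilon$ small enough relative to $\mu$ that every quadratic gradient term on the right is absorbed by the viscous left-hand side. Integrate in~$t$. Each remaining nonlinear contribution is of the form $\int_0^t|\cdot|_{p_1}^2|\cdot|_{p_2}^2\,dt'$ and is split by Hölder in time as one $L^\infty_t$ factor times one $L^2_t$ factor; the $L^\infty_t$ factor is routed to $D_1$ (via $|v|_{6,\infty,\Omega^t}\le D_1$) or to $\chi_0/\sqrt{\nu}$ (via $|\nabla\varphi|_{2,1,\infty,\Omega^t}$, which through Sobolev controls $\|\nabla\varphi\|_{L^\infty_tH^2}$ and $\|\nabla\varphi_t\|_{L^\infty_tH^1}$), while the $L^2_t$ factor is routed to $D_2$ (via $\|v_t\|_{1,2,\Omega^t}\le D_2$) or to $\Psi/\nu$ (via $|\nabla\varphi|_{3,1,2,\Omega^t}$). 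This produces exactly the advertised sum $(D_1^2+\chi_0^2/\nu)(D_2^2+\Psi^2/\nu^2)+D_2^2(D_1^2+\chi_0^2/\nu)$, with the overall $1/\nu$ in front coming from the $1/\nu$ weighting of the $\rot\psi_t$ piece.

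The main obstacle is combinatorial rather than analytical: there are four quadratic nonlinearities, each admitting several interpolations, and only certain choices yield a bound that factorises into the four allowed quantities. A single misplaced step — for instance, bounding $|\rot\psi|_\infty$ rather than $|\rot\psi|_6$, or putting $|v_t|$ in $L^\infty_t L^2$ against $|\nabla\varphi_t|$ in $L^2_t L^6$ — would force dependence on a norm outside $\{D_1,D_2,\chi_0,\Psi\}$ and prevent closure of the inequality.
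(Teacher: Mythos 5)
Your proposal is correct and follows essentially the same route as the paper: differentiate (\ref{1.7}) in time, test against $\nabla\varphi_t$ and $\rot\psi_t$ separately (using the Helmholtz orthogonality to kill the $\rot\psi_{tt}\cdot\nabla\varphi_t$ and $\nu\nabla\divv v_t\cdot\rot\psi_t$ terms), integrate the convective terms by parts, absorb the resulting $\varepsilon|\nabla^2\varphi_t|_2^2$ and $\varepsilon|\nabla\rot\psi_t|_2^2$ into the viscous terms, add the $\rot\psi_t$ inequality weighted by $1/\nu$, and route the $L^\infty_t$ factors to $D_1,\chi_0/\sqrt{\nu}$ and the $L^2_t$ factors to $D_2,\Psi/\nu$ exactly as in (\ref{3.10})--(\ref{3.13}). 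The only cosmetic difference is that the paper first splits $v_t=\nabla\varphi_t+\rot\psi_t$ inside $I_1$ to discard the $\nabla\varphi_t$ part before integrating by parts, whereas you integrate by parts directly; both lead to the same bound.
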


\begin{proof}
Differentiate (\ref{1.7}) with respect to time, multiply by $\nabla\varphi_t$ and integrate over $\Omega$. Then we have
\begin{equation}\eqal{
&{1\over2}{d\over dt}|\nabla\varphi_t|_2^2+\mu|\nabla^2\varphi_t|_2^2+\nu |\Delta\varphi_t|_2^2\cr
&=-\intop_\Omega\rot\psi\cdot\nabla v_t\cdot\nabla\varphi_tdx-\intop_\Omega\rot\psi_t\cdot\nabla v\cdot\nabla\varphi_tdx\equiv I_1+I_2.\cr}
\label{3.9}
\end{equation}
Now we estimate the particular terms from the r.h.s. of (\ref{3.9}).
$$\eqal{
|I_1|&=\bigg|\intop_\Omega\rot\psi\cdot\nabla(\nabla\varphi_t+\rot\psi_t)\cdot \nabla\varphi_tdx\bigg|=\bigg|\intop_\Omega\rot\psi\cdot\nabla\rot\psi_t\cdot \nabla\varphi_tdx\bigg|\cr
&=\bigg|\intop_\Omega\rot\psi\cdot\nabla\nabla\varphi_t\cdot\rot\psi_tdx\bigg|\le \varepsilon|\nabla^2\varphi_t|_2^2+c/\varepsilon|\rot\psi_t|_3^2|\rot\psi|_6^2\cr
&\le\varepsilon|\nabla^2\varphi_t|_2^2+c/\varepsilon|\rot\psi_t|_3^2\bigg(D_1^2+ {\chi_0^2\over\nu}\bigg),\cr}
$$
where we used that $|\rot\psi|_6=|\rot\psi+\nabla\varphi-\nabla\varphi|_6\le|v|_6+|\nabla\varphi|_6\le D_1+{\chi_0\over\sqrt{\nu}}$, and
$$
|I_2|=\bigg|\intop_\Omega\rot\psi_t\cdot\nabla\nabla\varphi_t\cdot vdx\bigg|\le \varepsilon|\nabla^2\varphi_t|_2^2+c/\varepsilon|\rot\psi_t|_3^2D_1^2.
$$
Using the estimates in (\ref{3.9}) and integrating the result with respect to time yield
\begin{equation}\eqal{
&|\nabla\varphi_t(t)|_2^2+\mu|\nabla^2\varphi_t|_{2,\Omega^t}^2+ \nu|\Delta\varphi_t|_{2,\Omega^t}^2\cr
&\le{c\over\nu}\bigg(D_1^2+{\chi_0^2\over\nu}\bigg)|\rot\psi_t|_{3,2,\Omega^t}^2+ |\nabla\varphi_t(0)|_2^2.\cr}
\label{3.10}
\end{equation}
Differentiate (\ref{1.7}) with respect to time, multiply by $\rot\psi_t$ and integrate over $\Omega$. Then we obtain
\begin{equation}\eqal{
&{1\over2}{d\over dt}|\rot\psi_t|_2^2+\mu|\nabla\rot\psi_t|_2^2=-\intop_\Omega \rot\psi\cdot\nabla v_t\cdot\rot\psi_tdx\cr
&\quad-\intop_\Omega\rot\psi_t\cdot\nabla v\cdot\rot\psi_tdx\equiv J_1+J_2,\cr}
\label{3.11}
\end{equation}
where
$$
|J_1|=\bigg|\intop_\Omega\rot\psi\cdot\nabla\rot\psi_t\cdot v_tdx\bigg|\le \varepsilon|\nabla\rot\psi_t|_2^2+c/\varepsilon|\rot\psi|_3^2|v_t|_6^2
$$
and
$$
|J_2|=\bigg|\intop_\Omega\rot\psi_t\cdot\nabla\rot\psi_t\cdot vdx\bigg|\le \varepsilon|\nabla\rot\psi_t|_2^2+c/\varepsilon|\rot\psi_t|_3^2|v|_6^2.
$$
Using the estimates in (\ref{3.11}), integrating the result with respect to time and employing (\ref{2.27}), Lemma \ref{l2.6} we obtain
\begin{equation}\eqal{
&|\rot\psi_t|_2^2+\mu|\nabla\rot\psi_t|_{2,\Omega^t}^2\le c|\rot\psi|_{3,\infty,\Omega^t}^2D_2^2\cr
&\quad+c|\rot\psi_t|_{3,2,\Omega^t}^2D_1^2+|\rot\psi_t(0)|_2^2.\cr}
\label{3.12}
\end{equation}
Multiplying (\ref{3.12}) by $1/\nu$ and adding to (\ref{3.10}) yield
\begin{equation}\eqal{
&|\nabla\varphi_t(t)|_2^2+{1\over\nu}|\rot\psi_t(t)|_2^2+\mu\bigg( |\nabla^2\varphi_t|_{2,\Omega^t}^2+{1\over\nu}|\nabla\rot\psi_t|_{2,\Omega^t}^2\bigg)\cr
&\quad+\nu|\Delta\varphi_t|_{2,\Omega^t}^2\le{c\over\nu}\bigg(D_1^2+ {\chi_0^2\over\nu}\bigg)|\rot\psi_t|_{3,2,\Omega^t}^2\cr
&\quad+{c\over\nu}|\rot\psi|_{3,\infty,\Omega^t}^2D_2^2+ |\nabla\varphi_t(0)|_2^2+{1\over\nu}|\rot\psi_t(0)|_2^2.\cr}
\label{3.13}
\end{equation}
Employing the inequalities
$$\eqal{
|\rot\psi_t|_{3,2,\Omega^t}^2&=|\rot\psi_t+\nabla\varphi_t- \nabla\varphi_t|_{3,2,\Omega^t}^2\le c(|v_t|_{3,2,\Omega^t}^2+|\nabla\varphi_t|_{3,2,\Omega^t}^2)\cr
&\le c\bigg(D_2^2+{\Psi^2\over\nu^2}\bigg),\cr}
$$
$$\eqal{
|\rot\psi|_{3,\infty,\Omega^t}^2&=|\rot\psi+\nabla\varphi- \nabla\varphi|_{3,\infty,\Omega^t}^2\le c(|v|_{3,\infty,\Omega^t}^2+|\nabla\varphi|_{3,\infty,\Omega^t}^2)\cr
&\le c\bigg(D_1^2+{\chi_0^2\over\nu}\bigg)\cr}
$$
in (\ref{3.13}) implies (\ref{3.8}) and concludes the proof.
\end{proof}

\begin{lemma}\label{l3.3}
Assume that $\nu\in(0,\infty)$, $A_1$, $D_1$ are finite, $\nabla\varphi(0)$, $\rot\psi(0)\in H^1(\Omega)$. Then
\begin{equation}\eqal{
&|\nabla\varphi_x(t)|_2^2+{1\over\nu}|\rot\psi_x(t)|_2^2+\mu\bigg( |\nabla^2\varphi_x|_{2,\Omega^t}^2+{1\over\nu}|\nabla\rot\psi_x|_{2,\Omega^t}^2\bigg)\cr
&\quad+\nu|\Delta\varphi_x|_{2,\Omega^t}^2\le{c\over\nu}A_1^2\bigg(D_1^2+ {\chi_0^2\over\nu}\bigg)^2+{c\over\nu}D_1^4\bigg[A_1^2+{1\over\nu}A_1^2D_1^4\cr
&\quad+|\nabla\varphi(0)|_2^2+{1\over\nu}|\rot\psi(0)|_2^2\bigg]+|\nabla\varphi_x(0)|_2^2+ {1\over\nu}|\rot\psi_x(0)|_2^2\cr
&\equiv{c\over\nu}\phi_2\bigg(A_1,D_1,|\nabla\varphi(0)|_2^2+ {1\over\nu}|\rot\psi(0)|_2^2\bigg)+ |\nabla\varphi_x(0)|_2^2+{1\over\nu}|\rot\psi_x(0)|_2^2.\cr}
\label{3.14}
\end{equation}
\end{lemma}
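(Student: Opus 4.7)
The plan is to mimic the two-step energy scheme of Lemma \ref{l3.1}, applied now to the equation satisfied by the first spatial derivative of $v$ rather than to $v$ itself. Concretely, I differentiate (\ref{1.7}) in $x_k$ to obtain
\begin{equation*}
v_{x_k,t}+\rot\psi_{x_k}\cdot\nabla v+\rot\psi\cdot\nabla v_{x_k}-\mu\Delta v_{x_k}-\nu\nabla\divv v_{x_k}=0,
\end{equation*}
and then test this equation against $\nabla\varphi_{x_k}$ and against $\rot\psi_{x_k}$, summing over $k$. Using $v_{x_k}=\nabla\varphi_{x_k}+\rot\psi_{x_k}$ together with $\divv\rot\psi_{x_k}=0$, the mixed cross-terms on the left drop out, so the first testing produces
\begin{equation*}
\tfrac{1}{2}\tfrac{d}{dt}|\nabla\varphi_x|_2^2+\mu|\nabla^2\varphi_x|_2^2+\nu|\Delta\varphi_x|_2^2=-\intop_\Omega\rot\psi_{x_k}\cdot\nabla v\cdot\nabla\varphi_{x_k}dx-\intop_\Omega\rot\psi\cdot\nabla v_{x_k}\cdot\nabla\varphi_{x_k}dx,
\end{equation*}
and the second testing gives an analogous relation with $\mu|\nabla\rot\psi_x|_2^2$ as the viscous dissipation.

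To control the two commutators on the right I would integrate by parts once more, exploiting $\divv\rot\psi=0$ to write $\rot\psi_{x_k}\cdot\nabla v=\partial_{x_j}((\rot\psi_{x_k})_j v)$ and similarly in the second term, moving a derivative onto $\nabla\varphi_{x_k}$ so that $|\nabla^2\varphi_x|_2$ appears and can be absorbed into the dissipation with a small factor $\varepsilon$. What remains after this absorption is of the schematic form $\varepsilon^{-1}|\rot\psi_x|_3^2|v|_6^2+\varepsilon^{-1}|\rot\psi|_6^2|v_x|_3^2$; here $|v|_6\le D_1$ by (\ref{2.27}) and $|\rot\psi|_6\le D_1+\chi_0/\sqrt{\nu}$ (as already used in Lemma \ref{l3.2}). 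The $\rot\psi_x$-testing contributes $\varepsilon^{-1}|\rot\psi|_6^2|v_x|_3^2+\varepsilon^{-1}|\rot\psi_x|_3^2|v|_6^2$ in the same way.

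I then combine the two inequalities with weights $1$ and $1/\nu$, as in (\ref{3.7}), and integrate in time. The $L_2$-in-time norms of the $|{\cdot}|_3$ factors are converted by interpolation as at the end of Lemma \ref{l3.1}: $|\rot\psi_x|_{3,2,\Omega^t}^2\le c|\rot\psi_x|_{2,\Omega^t}|\nabla\rot\psi_x|_{2,\Omega^t}$ and analogously for $|v_x|_{3,2,\Omega^t}^2$, after which Young's inequality absorbs a small fraction of $\mu|\nabla^2\varphi_x|_{2,\Omega^t}^2$ and $(\mu/\nu)|\nabla\rot\psi_x|_{2,\Omega^t}^2$ and leaves on the right the quantities $\frac{c}{\nu}(D_1^2+\chi_0^2/\nu)^2|v_x|_{2,\Omega^t}^2+\frac{c}{\nu}D_1^4|\rot\psi_x|_{2,\Omega^t}^2$. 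Since $|v_x|_{2,\Omega^t}^2=|\nabla v|_{2,\Omega^t}^2\le cA_1^2$ by Lemma \ref{l2.1} and $|\rot\psi_x|_{2,\Omega^t}^2=|\nabla\rot\psi|_{2,\Omega^t}^2$ is controlled by the right-hand side of (\ref{3.1}), substitution reproduces exactly the claimed bound (\ref{3.14}) together with the initial contribution $|\nabla\varphi_x(0)|_2^2+|\rot\psi_x(0)|_2^2/\nu$.

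The main obstacle is the asymmetric bookkeeping of powers of $\nu$: because the $\nu$-weighted dissipation sits only on $\nabla\divv v_{x_k}$, all nonlinear contributions involving derivatives of $\rot\psi$ must be absorbed against $(\mu/\nu)|\nabla\rot\psi_x|_2^2$ (after the $1/\nu$ weighting), which forces every Young splitting on the $\rot\psi_x$-line to carry exactly one extra factor of $1/\nu$. Getting the two lines to combine into a single coherent inequality with the right coefficients --- as opposed to accumulating spurious powers of $\nu$ --- is the one nontrivial bit; the rest is Grönwall-free algebra once the bound (\ref{3.1}) of Lemma \ref{l3.1} is invoked.
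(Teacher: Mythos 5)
Your scheme is the paper's own: differentiate (\ref{1.7}) in $x$, test against $\nabla\varphi_x$ and against $\rot\psi_x$, integrate by parts so that the dissipation absorbs $\varepsilon|\nabla^2\varphi_x|_2^2$ (resp.\ $\varepsilon|\nabla\rot\psi_x|_2^2$), bound the remainders by $|\rot\psi_x|_3^2|v|_6^2$ and $|\rot\psi|_6^2|v_x|_3^2$ using (\ref{2.27}) and (\ref{3.17}), add the two lines with weights $1$ and $1/\nu$, and close by the interpolation $|w|_{3,2,\Omega^t}^2\le c|\nabla w|_{2,\Omega^t}|w|_{2,\Omega^t}$ plus Young. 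This is exactly the sequence (\ref{3.15})--(\ref{3.23}) in the paper.

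One step as you state it would not reproduce (\ref{3.14}): you propose to control $|\rot\psi_x|_{2,\Omega^t}^2=|\nabla\rot\psi|_{2,\Omega^t}^2$ ``by the right-hand side of (\ref{3.1})''. In (\ref{3.1}) that quantity enters with the weight $\mu/\nu$, so extracting it costs a factor $\nu/\mu$; after multiplication by the prefactor $cD_1^4/\nu$ the contribution of the initial data would be $cD_1^4|\nabla\varphi(0)|_2^2$ with no $1/\nu$, rather than the $\frac{c}{\nu}D_1^4\big[\dots+|\nabla\varphi(0)|_2^2+\dots\big]$ claimed in (\ref{3.14}). Since the entire point of the lemma is that every nonlinear contribution carries at least one factor $1/\nu$ (so that the fixed-point argument of Theorem \ref{t3.6} closes for large $\nu$), this is not cosmetic. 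The paper avoids it by writing $\rot\psi_x=v_x-\nabla\varphi_x$, bounding $|v_x|_{2,\Omega^t}^2\le cA_1^2$ by Lemma \ref{l2.1}, and reading $|\nabla\varphi_x|_{2,\Omega^t}^2=|\nabla^2\varphi|_{2,\Omega^t}^2$ off the $\mu|\nabla^2\varphi|_{2,\Omega^t}^2$ term of (\ref{3.1}), which carries no $1/\nu$ weight; with that substitution your argument matches (\ref{3.14}) exactly.
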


\begin{proof}
Differentiate (\ref{1.7}) with respect to $x$, multiply by $\nabla\varphi_x$ and integrate over $\Omega$. Then we obtain
\begin{equation}\eqal{
&{1\over2}{d\over dt}|\nabla\varphi_x|_2^2+\mu|\nabla^2\varphi_x|_2^2+\nu |\Delta\varphi_x|_2^2\cr
&=-\intop_\Omega\rot\psi\cdot\nabla v_x\cdot\nabla\varphi_xdx-\intop_\Omega\rot\psi_x\cdot\nabla v\cdot\nabla\varphi_xdx\equiv I_1+I_2.\cr}
\label{3.15}
\end{equation}
Integration by parts in $I_1$ yields
$$
|I_1|=\bigg|\intop_\Omega\rot\psi\cdot\nabla\nabla\varphi_x\cdot v_xdx\bigg|\le\varepsilon|\nabla^2\varphi_x|_2^2+c/\varepsilon|\rot\psi|_6^2|v_x|_3^2.
$$
Similarly, we have
$$
|I_2|=\bigg|\intop_\Omega\rot\psi_x\cdot\nabla\nabla\varphi_x\cdot vdx\bigg|\le\varepsilon|\nabla^2\varphi_x|_2^2+c/\varepsilon|\rot\psi_x|_3^2|v|_6^2.
$$
Using the estimates in (\ref{3.15}) and integrating the result with respect to time imply
\begin{equation}\eqal{
&|\nabla\varphi_x(t)|_2^2+\mu|\nabla^2\varphi_x|_{2,\Omega^t}^2+ \nu|\Delta\varphi_x|_{2,\Omega^t}^2\le{c\over\nu}|\rot\psi|_{6,\infty,\Omega^t}^2 |v_x|_{3,2,\Omega^t}^2\cr
&\quad+{c\over\nu}|\rot\psi_x|_{3,2,\Omega^t}^2D_1^2+|\nabla\varphi_x(0)|_2^2.\cr}
\label{3.16}
\end{equation}
To estimate the first term on the r.h.s. of (\ref{3.16}) we use the estimate
\begin{equation}\eqal{
|\rot\psi|_{6,\infty,\Omega^t}&=|\rot\psi+\nabla\varphi- \nabla\varphi|_{6,\infty,\Omega^t}\cr
&\le c(|v|_{6,\infty,\Omega^t}+|\nabla\varphi|_{6,\infty,\Omega^t})\le c\bigg(D_1+{\chi_0\over\sqrt{\nu}}\bigg).}
\label{3.17}
\end{equation}
Then (\ref{3.16}) takes the form
\begin{equation}\eqal{
&|\nabla\varphi_x(t)|_2^2+\mu|\nabla^2\varphi_x|_{2,\Omega^t}^2+\nu |\Delta\varphi_x|_{2,\Omega^t}^2\le{c\over\nu}|v_x|_{3,2,\Omega^t}^2 \bigg(D_1^2+{\chi_0^2\over\nu}\bigg)\cr
&\quad+{c\over\nu}|\rot\psi_x|_{3,2,\Omega^t}^2D_1^2+|\nabla\varphi_x(0)|_2^2.\cr}
\label{3.18}
\end{equation}
Differentiate (\ref{1.7}) with respect to $x$, multiply by $\rot\psi_x$ and integrate over $\Omega$. Then we obtain
\begin{equation}\eqal{
&{1\over2}{d\over dt}|\rot\psi_x|_2^2+\mu|\nabla\rot\psi_x|_2^2=-\intop_\Omega \rot\psi\cdot\nabla v_x\cdot\rot\psi_xdx\cr
&\quad-\intop_\Omega\rot\psi_x\cdot\nabla v\cdot\rot\psi_xdx\equiv J_1+J_2.\cr}
\label{3.19}
\end{equation}
Integrating by parts in $J_1$ yields
$$
|J_1|=\bigg|\intop_\Omega\rot\psi\cdot\nabla\rot\psi_x\cdot v_xdx\bigg|\le\varepsilon|\nabla\rot\psi_x|_2^2+{c\over\varepsilon} |\rot\psi|_6^2|v_x|_3^2.
$$
Similarly, we have
$$
|J_2|=\bigg|\intop_\Omega\rot\psi_x\cdot\nabla\rot\psi_x\cdot vdx\bigg|\le \varepsilon|\nabla\rot\psi_x|_2^2+{c\over\varepsilon}|\rot\psi_x|_3^2|v|_6^2.
$$
Using the estimates and (\ref{3.17}) in (\ref{3.19}), taking into account Lemma \ref{l2.4} and integrating the result with respect to time we obtain the inequality
\begin{equation}\eqal{
&|\rot\psi_x(t)|_2^2+\mu|\nabla\rot\psi_x|_{2,\Omega^t}^2\le c|v_x|_{3,2,\Omega^t}^2\bigg(D_1^2+{\chi_0^2\over\nu}\bigg)\cr
&\quad+c|\rot\psi_x|_{3,2,\Omega^t}^2D_1^2+|\rot\psi_x(0)|_2^2.\cr}
\label{3.20}
\end{equation}
Multiplying (\ref{3.20}) by $1/\nu$ and summing up with (\ref{3.18}) we have
\begin{equation}\eqal{
&|\nabla\varphi_x(t)|_2^2+{1\over\nu}|\rot\psi_x(t)|_2^2+\mu\bigg( |\nabla^2\varphi_x|_{2,\Omega^t}^2+{1\over\nu}|\nabla\rot\psi_x|_{2,\Omega^t}^2\bigg)\cr
&\quad+\nu|\Delta\varphi_x|_{2,\Omega^t}^2\le{c\over\nu}|v_x|_{3,2,\Omega^t}^2 \bigg(D_1^2+{\chi_0^2\over\nu}\bigg)\cr
&\quad+{c\over\nu}|\rot\psi_x|_{3,2,\Omega^t}^2D_1^2+|\nabla\varphi_x(0)|_2^2+ {1\over\nu}|\rot\psi_x(0)|_2^2.\cr}
\label{3.21}
\end{equation}
By interpolation and Lemma \ref{l2.1} we have
\begin{equation}
|v_x|_{3,2,\Omega^t}^2\le c|\nabla v_x|_{2,\Omega^t}|v_x|_{2,\Omega^t}\le \varepsilon|\nabla v_x|_{2,\Omega^t}^2+c/\varepsilon A_1^2
\label{3.22}
\end{equation}
and
\begin{equation}\eqal{
|\rot\psi_x|_{3,2,\Omega^t}^2&\le c|\nabla\rot\psi_x|_{2,\Omega^t}|\rot\psi_x|_{2,\Omega^t}\cr
&\le\varepsilon|\nabla\rot\psi_x|_{2,\Omega^t}^2+ c/\varepsilon|\rot\psi_x|_{2,\Omega^t}^2,\cr}
\label{3.23}
\end{equation}
where
$$\eqal{
|\rot\psi_x|_{2,\Omega^t}^2&=|\rot\psi_x+\nabla\varphi_x-\nabla\varphi_x|_{2,\Omega^t}\le c(|v_x|_{2,\Omega^t}^2+|\nabla\varphi_x|_{2,\Omega^t}^2)\cr
&\le cA_1^2+c|\nabla\varphi_x|_{2,\Omega^t}^2\cr}
$$
and $|\nabla\varphi_x|_{2,\Omega^t}$ is estimated by (\ref{3.1}). Hence, we have
$$
|\nabla\varphi_x|_{2,\Omega^t}^2\le{c\over\nu}A_1^2D_1^4+|\nabla\varphi(0)|_2^2+ {1\over\nu}|\rot\psi(0)|_2^2.
$$
Using the above estimates in (\ref{3.21}) implies (\ref{3.14}). This concludes the proof.
\end{proof}

\begin{lemma}\label{l3.4}
Assume that $\nu\in(0,\infty)$, $D_1$, $D_2$, $\chi_0$ are finite, $\nabla\varphi(0),\break\rot\psi(0)\in H^1(\Omega)$, $\nabla\varphi_t(0),\rot\psi_t(0)\in H^1(\Omega)$, $\rot\psi_x\in L_\infty(0,t:L_3(\Omega))$.\\
Then
\begin{equation}\eqal{
&|\nabla\varphi_{xt}(t)|_2^2+{1\over\nu}|\rot\psi_{xt}(t)|_2^2+\mu \bigg(|\nabla^2\varphi_{xt}|_{2,\Omega^t}^2+{1\over\nu} |\nabla\rot\psi_{xt}|_{2,\Omega^t}^2\bigg)\cr
&\quad+\nu|\Delta\varphi_{xt}|_{2,\Omega^t}^2\le{c\over\nu}D_2^2 |\rot\psi_x|_{3,\infty,\Omega^t}^2+{c\over\nu}\bigg(D_1^4+ {\chi_0^4\over\nu^2}\bigg)\cdot\cr
&\quad\cdot(\phi_1+\nu|\nabla\varphi_t(0)|_2^2+|\rot\psi_t(0)|_2^2)+{c\over\nu} \bigg(D_2^2+{\chi_0^2\over\nu}\bigg)(\phi_2+\nu|\nabla\varphi_x(0)|_2^2\cr
&\quad+|\rot\psi_x(0)|_2^2)+{c\over\nu}\bigg(D_1^2+{\chi_0^2\over\nu}\bigg) {\Psi^2\over\nu^2}+|\nabla\varphi_{xt}(0)|_2^2+{1\over\nu} |\rot\psi_{xt}(0)|_2^2.\cr}
\label{3.24}
\end{equation}
\end{lemma}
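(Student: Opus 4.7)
The plan follows the same energy-method template used in Lemmas \ref{l3.1}--\ref{l3.3}, but now applied to the equation differentiated once in both $x$ and $t$. First I would differentiate (\ref{1.7}) with respect to $x$ and $t$, obtaining
$$v_{xt}+(\rot\psi\cdot\nabla v)_{xt}-\mu\Delta v_{xt}-\nu\nabla\divv v_{xt}=0,$$
in which the convective term expands into four pieces: $\rot\psi\cdot\nabla v_{xt}$, $\rot\psi_x\cdot\nabla v_t$, $\rot\psi_t\cdot\nabla v_x$ and $\rot\psi_{xt}\cdot\nabla v$. Testing first with $\nabla\varphi_{xt}$ and integrating by parts (using periodicity and the orthogonality $\intop_\Omega\rot\psi_{xt}\cdot\nabla\varphi_{xt}dx=0$) yields ${1\over2}{d\over dt}|\nabla\varphi_{xt}|_2^2+\mu|\nabla^2\varphi_{xt}|_2^2+\nu|\Delta\varphi_{xt}|_2^2$ on the left, with the four convective contributions on the right. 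A further integration by parts moves a spatial derivative onto $\nabla\varphi_{xt}$, and each resulting term is controlled by $\varepsilon|\nabla^2\varphi_{xt}|_2^2$ plus a product of the type $|\rot\psi|_6^2|v_{xt}|_3^2$, $|\rot\psi_x|_6^2|v_t|_3^2$, $|\rot\psi_t|_6^2|v_x|_3^2$ or $|\rot\psi_{xt}|_3^2|v|_6^2$.

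Next, I would test the same differentiated equation against $\rot\psi_{xt}$, which by the same procedure produces an analogous inequality with ${1\over2}{d\over dt}|\rot\psi_{xt}|_2^2+\mu|\nabla\rot\psi_{xt}|_2^2$ on the left. Multiplying this second inequality by $1/\nu$, adding it to the first, absorbing the $\varepsilon$-terms into the viscous contributions, and integrating in time reproduces precisely the left-hand side of (\ref{3.24}). It then remains to convert the accumulated right-hand side into the claimed bound.

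For this last step I would systematically substitute the estimates already proved in the paper. The quantities $|v|_6$, $|v|_{6,\infty,\Omega^t}$ are controlled by $D_1$, and via $\rot\psi=v-\nabla\varphi$ by $D_1+\chi_0/\sqrt{\nu}$; the time-derivative quantities $|v_t|_{2,\infty}$ and $|v_t|_{6}$ come from Lemma \ref{l2.6} and are dominated by $D_2$; the $x$-derivative quantities $|v_x|_{3,2,\Omega^t}$, $|\rot\psi_x|_{3,2,\Omega^t}$ are estimated by (\ref{3.14}) of Lemma \ref{l3.3}, which introduces the $\phi_2$ factor together with $|\nabla\varphi_x(0)|_2^2$ and $|\rot\psi_x(0)|_2^2$; the mixed quantity $|\rot\psi_t|_{3,2,\Omega^t}$ is handled by Lemma \ref{l3.2} and yields the factor $\phi_1$ together with $|\nabla\varphi_t(0)|_2^2$ and $|\rot\psi_t(0)|_2^2$. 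The separate term $(D_1^2+\chi_0^2/\nu)\Psi^2/\nu^2$ appears when $|\nabla\varphi_t|_{3,2,\Omega^t}$ is replaced by $\Psi/\nu$ according to Notation \ref{n1.7}. Interpolations of the form $|f|_3^2\le c|\nabla f|_2|f|_2$ are applied to the quadratic factors before absorption into the left-hand side.

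The main obstacle is bookkeeping: each of the four convective products must be paired with the correct composite factor in (\ref{3.24}), and the orientation of each integration by parts has to be chosen so that $L_\infty$-in-time and $L_2$-in-time norms land on the appropriate factors. For instance, in the cross term $\rot\psi_x\cdot\nabla v_t$ tested against $\rot\psi_{xt}$ one must shift the spatial derivative onto $\rot\psi_{xt}$, so that Young--H\"older produces $|\rot\psi_x|_{3,\infty,\Omega^t}^2$ paired with $\intop_0^t|v_t|_6^2dt'\le cD_2^2$, matching the first term on the right of (\ref{3.24}); a different orientation would lead to a less favorable pairing and a bound not provable from Lemmas \ref{l2.1}--\ref{l3.3}. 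Once these orientations are fixed in each of the four convective pieces on each of the two test functions, the remaining work reduces to standard applications of Cauchy--Schwarz, H\"older, interpolation and the previously established estimates, together with the bookkeeping of powers of $\nu$ needed to align every term with the form displayed in (\ref{3.24}).
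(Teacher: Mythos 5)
Your proposal follows essentially the same route as the paper: differentiate (\ref{1.7}) in $x$ and $t$, test separately with $\nabla\varphi_{xt}$ and with $\rot\psi_{xt}$, add the second inequality weighted by $1/\nu$, integrate in time, and close the right-hand side using Lemma \ref{l2.6}, (\ref{3.8}), (\ref{3.14}) and the interpolation $|f|_3^2\le c|\nabla f|_2|f|_2$ with the same H\"older pairings (in particular the $|\rot\psi_x|_{3,\infty,\Omega^t}^2D_2^2$ term). Apart from minor bookkeeping slips (e.g.\ it is $|\rot\psi_{xt}|_{3,2,\Omega^t}$, not $|\rot\psi_t|_{3,2,\Omega^t}$, that is absorbed via Lemma \ref{l3.2} and produces the $\phi_1$ factor), this matches the paper's proof.
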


\begin{proof}
Differentiate (\ref{1.7}) with respect to $x$ and $t$, multiply the result by $\nabla\varphi_{xt}$ and integrate over $\Omega$. Then we have
\begin{equation}\eqal{
&{1\over2}{d\over dt}|\nabla\varphi_{xt}|_2^2+\mu|\nabla^2\varphi_{xt}|_2^2+\nu |\Delta\varphi_{xt}|_2^2\cr
&=-\intop_\Omega(\rot\psi\cdot\nabla v)_{xt}\cdot\nabla\varphi_{xt}dx\equiv -I.\cr}
\label{3.25}
\end{equation}
Performing differentiations with respect to $x$ and $t$ in $I$ implies
$$
I=\intop_\Omega(\rot\psi_{xt}\cdot\nabla v+\rot\psi_x\cdot\nabla v_t+\rot\psi_t\cdot\nabla v_x+\rot\psi\cdot\nabla v_{xt})\cdot\nabla\varphi_{xt}dx\equiv\sum_{i=1}^4I_i.
$$
Integrating by parts in $I_1$ yields
$$
|I_1|=\bigg|\intop_\Omega\rot\psi_{xt}\cdot\nabla\nabla\varphi_{xt}\cdot vdx\bigg|\le\varepsilon|\nabla^2\varphi_{xt}|_2^2+ c/\varepsilon|\rot\psi_{xt}|_3^2|v|_6^2.
$$
Similarly, integration by parts in $I_2$ implies
$$
|I_2|=\bigg|\intop_\Omega\rot\psi_x\cdot\nabla\nabla\varphi_{xt}\cdot v_tdx\bigg|\le\varepsilon|\nabla^2\varphi_{xt}|_2^2+c/\varepsilon|\rot\psi_x|_3^2 |v_t|_6^2.
$$
Next, we have
$$
|I_3|=\bigg|\intop_\Omega\rot\psi_t\cdot\nabla\nabla\varphi_{xt}\cdot v_xdx\bigg|\le\varepsilon|\nabla^2\varphi_{xt}|_2^2+c/\varepsilon|\rot\psi_t|_3^2 |v_x|_6^2.
$$
Finally, we consider
$$\eqal{
I_4&=\intop_\Omega\rot\psi\cdot\nabla(\nabla\varphi_{xt}+\rot\psi_{xt})\cdot \nabla\varphi_{xt}dx\cr
&=\intop_\Omega\rot\psi\cdot\nabla\rot\psi_{xt}\cdot\nabla\varphi_{xt}dx= -\intop_\Omega(\rot\psi)_i(\rot\psi_{xt})_j\partial_{x_i}\partial_{x_j} \varphi_{xt}dx.\cr}
$$
Then, we have
$$
|I_4|\le\varepsilon|\nabla^2\varphi_{xt}|_2^2+c/\varepsilon |\rot\psi|_6^2|\rot\psi_{xt}|_3^2.
$$
Using the above estimates in (\ref{3.25}) we obtain after integration with respect to time and with the help of (\ref{2.27}), (\ref{2.28}) and (\ref{3.17}) the inequality
\begin{equation}\eqal{
&|\nabla\varphi_{xt}(t)|_2^2+\mu|\nabla^2\varphi_{xt}|_{2,\Omega^t}^2+\nu |\Delta\varphi_{xt}|_{2,\Omega^t}^2\le{c\over\nu}D_1^2|\rot\psi_{xt}|_{3,2,\Omega^t}^2\cr
&\quad+{c\over\nu}D_2^2|\rot\psi_x|_{3,\infty,\Omega^t}^2+{c\over\nu} |\rot\psi_t|_{3,\infty,\Omega^t}^2|v_x|_{6,2,\Omega^t}^2\cr
&\quad+{c\over\nu}\bigg(D_1^2+{\chi_0^2\over\nu}\bigg)|\rot\psi_{xt}|_{3,2,\Omega^t}^2 +|\nabla\varphi_{xt}(0)|_2^2.\cr}
\label{3.26}
\end{equation}
Using (\ref{3.14}) we have
\begin{equation}\eqal{
|v_x|_{6,2,\Omega^t}^2&\le c\|v_x\|_{1,2,\Omega^t}^2\le c(\nu \|\nabla\varphi_x\|_{1,2,\Omega^t}^2+\|\rot\psi_x\|_{1,2,\Omega^t}^2)\cr
&\le c\phi_2+c(\nu|\nabla\varphi_x(0)|_2^2+|\rot\psi_x(0)|_2^2).\cr}
\label{3.27}
\end{equation}
Therefore, (\ref{3.26}) takes the form
\begin{equation}\eqal{
&|\nabla\varphi_{xt}(t)|_2^2+\mu|\nabla^2\varphi_{xt}|_{2,\Omega^t}^2+\nu |\Delta\varphi_{xt}|_{2,\Omega^t}^2\cr
&\le{c\over\nu}\bigg(D_1^2+{\chi_0^2\over\nu}\bigg) |\rot\psi_{xt}|_{3,2,\Omega^t}^2+{c\over\nu}D_2^2|\rot\psi_x|_{3,\infty,\Omega^t}^2\cr
&\quad+{c\over\nu} |\rot\psi_t|_{3,\infty,\Omega^t}^2(\phi_2+\nu|\nabla\varphi_x(0)|_2^2+|\rot\psi_x(0)|_2^2)+|\nabla\varphi_{xt}(0)|_2^2.\cr}
\label{3.28}
\end{equation}
Differentiate (\ref{1.7}) with respect to $x$ and $t$, multiply the result by $\rot\psi_{xt}$ and integrate over $\Omega$. Then we have
\begin{equation}
{1\over2}{d\over dt}|\rot\psi_{xt}(t)|_2^2+\mu|\nabla\rot\psi_{xt}|_2^2= -\intop_\Omega(\rot\psi\cdot\nabla v)_{xt}\cdot\rot\psi_{xt}dx\equiv -J.
\label{3.29}
\end{equation}
Performing differentiations with respect to $x$ and $t$ in $J$ yields
$$
J=\intop_\Omega(\rot\psi_{xt}\cdot\nabla v+\rot\psi_x\cdot\nabla v_t+\rot\psi_t\cdot\nabla v_x+\rot\psi\cdot\nabla v_{xt})\cdot\rot\psi_{xt}dx\equiv \sum_{i=1}^4J_i.
$$
Integrating by parts in $J_1$ yields
$$\eqal{
|J_1|&=\bigg|\intop_\Omega\rot\psi_{xt}\cdot\nabla\rot\psi_{xt}\cdot vdx\bigg|\le\varepsilon|\nabla\rot\psi_{xt}|_2^2\cr
&\quad+c/\varepsilon|\rot\psi_{xt}|_3^2|v|_6^2.\cr}
$$
Similarly, integration by parts in $J_2$ implies
$$
|J_2|=\bigg|\intop_\Omega\rot\psi_x\cdot\nabla\rot\psi_{xt}\cdot v_tdx\bigg|\le\varepsilon|\nabla\rot\psi_{xt}|_2^2+{c\over\varepsilon} |\rot\psi_x|_3^2|v_t|_6^2.
$$
Next, we have
$$
|J_3|=\bigg|\intop_\Omega\rot\psi_t\cdot\nabla\rot\psi_{xt}\cdot v_xdx\bigg|\le\varepsilon|\nabla\rot\psi_{xt}|_2^2+{c\over\varepsilon} |\rot\psi_t|_3^2|v_x|_6^2.
$$
Finally, we examine
$$\eqal{
J_4&=\intop_\Omega\rot\psi\cdot\nabla(\nabla\varphi_{xt}+\rot\psi_{xt})\cdot \rot\psi_{xt}dx\cr
&=\intop_\Omega\rot\psi\cdot\nabla\nabla\varphi_{xt}\cdot\rot\psi_{xt}dx= -\intop_\Omega\rot\psi\cdot\nabla\rot\psi_{xt}\cdot\nabla\varphi_{xt}dx.\cr}
$$
Then we have
$$
|J_4|\le\varepsilon|\nabla\rot\psi_{xt}|_2^2+c/\varepsilon|\rot\psi|_6^2 |\nabla\varphi_{xt}|_3^2.
$$
Using the above estimates in (\ref{3.29}) we obtain after integration with respect to time with the help of (\ref{2.27}), (\ref{2.28}), (\ref{3.17}) and (\ref{3.27}) the inequality
\begin{equation}\eqal{
&|\rot\psi_{xt}(t)|_2^2+\mu|\nabla\rot\psi_{xt}|_{2,\Omega^t}^2\le cD_1^2|\rot\psi_{xt}|_{3,2,\Omega^t}^2\cr
&\quad+cD_2^2|\rot\psi_x|_{3,\infty,\Omega^t}^2+c|\rot\psi_t|_{3,\infty,\Omega^t}^2 (\phi_2+\nu|\nabla\varphi_x(0)|_2^2\cr
&\quad+|\rot\psi_x(0)|_2^2)+c\bigg(D_1^2+{\chi_0^2\over\nu}\bigg) |\nabla\varphi_{xt}|_{3,2,\Omega^t}^2+|\rot\psi_{xt}(0)|_2^2.\cr}
\label{3.30}
\end{equation}
Multiplying (\ref{3.30}) by $1/\nu$ and summing up with (\ref{3.28}) we obtain
\begin{equation}\eqal{
&|\nabla\varphi_{xt}(t)|_2^2+{1\over\nu}|\rot\psi_{xt}(t)|_2^2+\mu \bigg(|\nabla^2\varphi_{xt}|_{2,\Omega^t}^2+{1\over\nu} |\nabla\rot\psi_{xt}|_{2,\Omega^t}^2\bigg)\cr
&\quad+\nu|\Delta\varphi_{xt}|_{2,\Omega^t}^2\le {c\over\nu}D_2^2|\rot\psi_x|_{3,\infty,\Omega^t}^2+{c\over\nu} |\rot\psi_t|_{3,\infty,\Omega^t}^2(\phi_2+\nu|\nabla\varphi_x(0)|_2^2\cr
&\quad+|\rot\psi_x(0)|_2^2)+{c\over\nu}\bigg(D_1^2+{\chi_0^2\over\nu}\bigg)(|\nabla\varphi_{xt}|_{3,2,\Omega^t}^2 +|\rot\psi_{xt}|_{3,2,\Omega^t}^2)\cr
&\quad+ |\nabla\varphi_{xt}(0)|_2^2+{1\over\nu}|\rot\psi_{xt}(0)|_2^2.\cr}
\label{3.31}
\end{equation}
Now we shall estimate the unknown quantities appeared on the r.h.s. of (\ref{3.31}). We need the intepolation
$$
|\rot\psi_{xt}|_{3,2,\Omega^t}^2\le\varepsilon|\nabla\rot\psi_{xt}|_{2,\Omega^t}^2+ c/\varepsilon|\rot\psi_{xt}|_{2,\Omega^t}^2,
$$
where, in view of (\ref{3.8}), we have
$$
|\rot\psi_{xt}|_{2,\Omega^t}^2\le c\phi_1+\nu|\nabla\varphi_t(0)|_2^2+|\rot\psi_t(0)|_2^2.
$$
Therefore, the following part of the third term on the r.h.s. of (\ref{3.31}) is bounded by
$$\eqal{
&{c\over\nu}\bigg(D_1^2+{\chi_0^2\over\nu}\bigg)|\rot\psi_{xt}|_{3,2,\Omega^t}^2\cr
&\le{\varepsilon\over\nu}|\nabla\rot\psi_{xt}|_{2,\Omega^t}^2+ {c\over\nu\varepsilon}(\phi_1+\nu|\nabla\varphi_t(0)|_2^2+ |\rot\psi_t(0)|_2^2)\bigg(D_1^4+{\chi_0^4\over\nu^2}\bigg).}
$$
To estimate the second term on the r.h.s. of (\ref{3.31}) we use the interpolation
$$
|\rot\psi_t|_{3,\infty,\Omega^t}^2\le\varepsilon |\nabla\rot\psi_t|_{2,\infty,\Omega^t}^2+c/\varepsilon |\rot\psi_t|_{2,\infty,\Omega^t}^2,
$$
where
$$\eqal{
|\rot\psi_t|_{2,\infty,\Omega^t}^2&=|\rot\psi_t+\nabla\varphi_t- \nabla\varphi_t|_{2,\infty,\Omega^t}^2\le c(|v_t|_{2,\infty,\Omega^t}^2+ |\nabla\varphi_t|_{2,\infty,\Omega^t}^2)\cr
&\le c\bigg(D_2^2+{\chi_0^2\over\nu}\bigg).\cr}
$$
Then the second term on the r.h.s. of (\ref{3.31}) is bounded by
$$\eqal{
&{\varepsilon\over\nu}|\nabla\rot\psi_t|_{2,\infty,\Omega^t}^2+{c\over\nu\varepsilon} \bigg(D_2^2+{\chi_0^2\over\nu}\bigg)(\phi_2+\nu|\nabla\varphi_x(0)|_2^2\cr
&\quad+|\rot\psi_x(0)|_2^2)^2.\cr}
$$
The remainintg part of the third term on the r.h.s. of (\ref{3.31}) is bounded by
$$
{c\over\nu}\bigg(D_1^2+{\chi_0^2\over\nu}\bigg){\Psi^2\over\nu^2}.
$$
Using the above estimates in (\ref{3.31}) and assuming that $\varepsilon$ is sufficiently small we derive (\ref{3.24}). This concludes the proof.
\end{proof}

\begin{lemma}\label{l3.5}
Assume that $\nu\in(0,\infty)$, $A_1$, $D_1$, $D_2$, $\chi_0$, $\Psi$ are finite. Let $\nabla\varphi(0),\rot\psi(0)\in H^2(\Omega)$, $\nabla\varphi_t(0),\rot\psi_t(0)\in H^1(\Omega)$. Then
\begin{equation}\eqal{
&|\nabla\varphi_{xx}(t)|_2^2+{1\over\nu}|\rot\psi_{xx}(t)|_2^2+\mu \bigg( |\nabla^2\varphi_{xx}|_{2,\Omega^t}^2+{1\over\nu} |\nabla\rot\psi_{xx}|_{2,\Omega^t}^2\bigg)\cr
&\quad+\nu|\Delta\varphi_{xx}|_{2,\Omega^t}^2\le{c\over\nu}\phi_4+ |\nabla\varphi_{xx}(0)|_2^2+{1\over\nu}|\rot\psi_{xx}(0)|_2^2,\cr}
\label{3.32}
\end{equation}
where $\phi_4$ is defined in (\ref{3.39}).
\end{lemma}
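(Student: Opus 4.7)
The plan is to follow exactly the template of Lemmas \ref{l3.3} and \ref{l3.4}, pushing one more spatial derivative through the energy identity. I would differentiate (\ref{1.7}) twice in $x$ to obtain an equation for $v_{xx}$, then test first with $\nabla\varphi_{xx}$ and then with $\rot\psi_{xx}$, integrating over $\Omega$. The parabolic left-hand sides reproduce the structure of (\ref{3.18}) and (\ref{3.20}) with two extra $x$'s, namely $\frac{1}{2}\frac{d}{dt}|\nabla\varphi_{xx}|_2^2+\mu|\nabla^2\varphi_{xx}|_2^2+\nu|\Delta\varphi_{xx}|_2^2$ and $\frac{1}{2}\frac{d}{dt}|\rot\psi_{xx}|_2^2+\mu|\nabla\rot\psi_{xx}|_2^2$, respectively. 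The convective term expands by Leibniz into
\begin{equation*}
(\rot\psi\cdot\nabla v)_{xx}=\rot\psi_{xx}\cdot\nabla v+2\rot\psi_x\cdot\nabla v_x+\rot\psi\cdot\nabla v_{xx},
\end{equation*}
which, after integration by parts (moving the top-order derivative off $v$), produces three families of integrals analogous to the $I_i$, $J_i$ of Lemmas \ref{l3.3}--\ref{l3.4}.

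The outer terms are routine. The term $\int\rot\psi_{xx}\!\cdot\!\nabla v\!\cdot\!\nabla\varphi_{xx}dx$ is integrated by parts to $\int\rot\psi_{xx}\!\cdot\!\nabla^2\varphi_{xx}\!\cdot\!v\,dx$ and bounded by $\varepsilon|\nabla^2\varphi_{xx}|_2^2+c/\varepsilon|\rot\psi_{xx}|_3^2|v|_6^2$; since $|v|_6\le D_1$, the remaining factor is reabsorbed after interpolation $|\rot\psi_{xx}|_{3,2,\Omega^t}^2\le\varepsilon|\nabla\rot\psi_{xx}|_{2,\Omega^t}^2+c/\varepsilon|\rot\psi_{xx}|_{2,\Omega^t}^2$, where the latter reduces via $\rot\psi_{xx}=v_{xx}-\nabla\varphi_{xx}$ to already-controlled quantities from Lemma \ref{l3.3}. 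The term $\int\rot\psi\!\cdot\!\nabla v_{xx}\!\cdot\!\nabla\varphi_{xx}dx$ is handled by the Helmholtz split $v_{xx}=\nabla\varphi_{xx}+\rot\psi_{xx}$: the $\nabla\varphi_{xx}$ part is skew (integration by parts gives $\int\rot\psi\!\cdot\!\nabla|\nabla\varphi_{xx}|^2dx$ which vanishes after another integration by parts because $\divv\rot\psi=0$), while the $\rot\psi_{xx}$ part is bounded as in $I_4$ of Lemma \ref{l3.4} by $\varepsilon|\nabla^2\varphi_{xx}|_2^2+c/\varepsilon|\rot\psi|_6^2|\rot\psi_{xx}|_3^2$ with $|\rot\psi|_6^2\le c(D_1^2+\chi_0^2/\nu)$.

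The main obstacle is the middle term $2\int\rot\psi_x\!\cdot\!\nabla v_x\!\cdot\!\nabla\varphi_{xx}dx$, because all three factors involve intermediate-order derivatives that cannot all be absorbed on the left. My plan is to place $\rot\psi_x$ in $L_\infty L_3$ (this is precisely the extra hypothesis of the lemma, and is ultimately controlled by Lemma \ref{l3.3} via interpolation $|\rot\psi_x|_3\le c|\nabla\rot\psi_x|_2^{1/2}|\rot\psi_x|_2^{1/2}+c|\rot\psi_x|_2$), put $\nabla v_x$ in $L_2 L_2$, and use Young to get
\begin{equation*}
|\textstyle\int\rot\psi_x\cdot\nabla v_x\cdot\nabla\varphi_{xx}dx|\le\varepsilon|\nabla^2\varphi_{xx}|_2^2+c/\varepsilon|\rot\psi_x|_3^2|v_x|_6^2,
\end{equation*}
and treat $|v_x|_{6,2,\Omega^t}^2\le c\|v_x\|_{1,2,\Omega^t}^2\le c\nu\|\nabla\varphi_x\|_{1,2,\Omega^t}^2+c\|\rot\psi_x\|_{1,2,\Omega^t}^2$, which is bounded via (\ref{3.14}) exactly as in the computation (\ref{3.27}). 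Analogous estimates for the three terms tested against $\rot\psi_{xx}$ follow the same pattern; the one involving $\nabla v_{xx}$ is split again via $v_{xx}=\nabla\varphi_{xx}+\rot\psi_{xx}$, giving a skew piece and a controllable $|\nabla\varphi_{xx}|_3^2$ piece.

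After combining: take $1/\nu$ times the $\rot\psi_{xx}$ identity, add to the $\nabla\varphi_{xx}$ identity, integrate in time, choose $\varepsilon$ small enough to absorb all top-order terms into the parabolic left-hand side, and collect the remaining data in a polynomial expression $\phi_4$ built out of $A_1, D_1, D_2, \chi_0, \Psi$ and $|\nabla\varphi_x(0)|_2, |\rot\psi_x(0)|_2, |\nabla\varphi_t(0)|_2, |\rot\psi_t(0)|_2$ via (\ref{3.1}), (\ref{3.8}), (\ref{3.14}), (\ref{3.24}). The resulting inequality is (\ref{3.32}), and the only nontrivial bookkeeping is the definition of $\phi_4$ referenced in (\ref{3.39}), which simply records the sum of the right-hand sides obtained from these individual estimates.
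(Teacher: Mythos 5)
Your proposal follows essentially the same route as the paper's proof: the same Leibniz expansion of $(\rot\psi\cdot\nabla v)_{xx}$, the same integrations by parts with the skew cancellation of the $\nabla\varphi_{xx}$ part of the top-order convective term, the same use of (\ref{3.27}) for $|v_x|_{6,2,\Omega^t}$, and the same $1/\nu$-weighted summation; the only differences are cosmetic (the paper interpolates $|\rot\psi_{xx}|_{3,2,\Omega^t}^2\le c|\nabla\rot\psi_{xx}|_{2,\Omega^t}^{3/2}|\rot\psi_x|_{2,\Omega^t}^{1/2}$ so that the low-order factor is $|\rot\psi_x|_{2,\Omega^t}\le c(A_1+\Psi/\nu)$ rather than descending only to $|\rot\psi_{xx}|_{2,\Omega^t}$, and the $L_\infty(0,t;L_3)$ hypothesis on $\rot\psi_x$ you invoke actually belongs to Lemma \ref{l3.4}, not to this lemma, though it is recovered here by interpolation anyway).
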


\begin{proof}
Differentiate (\ref{1.7}) twice with respect to $x$, multiply by $\nabla\varphi_{xx}$ and integrate over $\Omega$. Then we obtain
\begin{equation}
{1\over2}{d\over dt}|\nabla\varphi_{xx}|_2^2+\mu|\nabla^2\varphi_{xx}|_2^2+\nu |\Delta\varphi_{xx}|_2^2=-\intop_\Omega(\rot\psi\cdot\nabla v)_{xx}\cdot\nabla\varphi_{xx}dx\equiv -I.
\label{3.33}
\end{equation}
Carrying out differentiations in $I$ yields
$$\eqal{
I&=\intop_\Omega(\rot\psi_{xx}\cdot\nabla v+2\rot\psi_x\cdot\nabla v_x+\rot\psi\cdot\nabla v_{xx})\cdot\nabla\varphi_{xx}dx\cr
&\equiv I_1+I_2+I_3.\cr}
$$
Integrating by parts in $I_1$ implies
$$
|I_1|=\bigg|\intop_\Omega\rot\psi_{xx}\cdot\nabla\nabla\varphi_{xx}\cdot vdx\bigg|\le\varepsilon|\nabla^2\varphi_{xx}|_2^2+{c\over\varepsilon} |\rot\psi_{xx}|_3^2|v|_6^2,
$$
where, in view of (\ref{2.27}), the second term is bounded by
$$
{c\over\varepsilon}|\rot\psi_{xx}|_3^2D_1^2.
$$
We integrate by parts in $I_2$. Then we obtain
$$
|I_2|=2\bigg|\intop_\Omega\rot\psi_x\cdot\nabla^2\varphi_{xx}\cdot v_xdx\bigg|\le\varepsilon|\nabla^2\varphi_{xx}|_2^2+c/\varepsilon |\rot\psi_x|_3^2|v_x|_6^2.
$$
Finally,
$$\eqal{
I_3&=\intop_\Omega\rot\psi\cdot\nabla(\nabla\varphi_{xx}+\rot\psi_{xx})\cdot \nabla\varphi_{xx}dx\cr
&=\intop_\Omega\rot\psi\cdot\nabla\rot\psi_{xx}\cdot\nabla\varphi_{xx}dx.\cr}
$$
Integration by parts in $I_3$ yields
$$
|I_3|=\bigg|\intop_\Omega\rot\psi\cdot\nabla\nabla\varphi_{xx}\cdot \rot\psi_{xx}dx\bigg|\le\varepsilon|\nabla^2\varphi_{xx}|_2^2+c/\varepsilon |\rot\psi|_6^2|\rot\psi_{xx}|_3^2,
$$
where
\begin{equation}
|\rot\psi|_6=|\rot\psi+\nabla\varphi-\nabla\varphi|_6\le|v|_6+|\nabla\varphi|_6\le D_1+{\chi_0\over\sqrt{\nu}}.
\label{3.34}
\end{equation}
Using the estimates in (\ref{3.33}) and integrating the result with respect to time yield
$$\eqal{
&|\nabla\varphi_{xx}(t)|_2^2+\mu|\nabla^2\varphi_{xx}|_{2,\Omega^t}^2+\nu |\Delta\varphi_{xx}|_{2,\Omega^t}^2\le{c\over\nu}|\rot\psi_{xx}|_{3,2,\Omega^t}^2 D_1^2\cr
&\quad+{c\over\nu}|\rot\psi_x|_{3,\infty,\Omega^t}^2|v_x|_{6,2,\Omega^t}^2+ {c\over\nu}|\rot\psi_{xx}|_{3,2,\Omega^t}^2\bigg(D_1^2+{\chi_0^2\over\nu}\bigg)\cr
&\quad+|\nabla\varphi_{xx}(0)|_2^2.\cr}
$$
Finally, using (\ref{3.27}), we get
\begin{equation}\eqal{
&|\nabla\varphi_{xx}(t)|_2^2+\mu|\nabla^2\varphi_{xx}|_{2,\Omega^t}^2+\nu |\Delta\varphi_{xx}|_{2,\Omega^t}^2\cr
&\le{c\over\nu}|\rot\psi_{xx}|_{3,2,\Omega^t}^2\bigg(D_1^2+{\chi_0^2\over\nu}\bigg)+ {c\over\nu}|\rot\psi_x|_{3,\infty,\Omega^t}^2(\phi_2+\nu|\nabla\varphi_x(0)|_2^2\cr
&\quad+|\rot\psi_x(0)|_2^2)+|\nabla\varphi_{xx}(0)|_2^2.\cr}
\label{3.35}
\end{equation}
Differentiate (\ref{1.7}) twice with respect to $x$, multiply by $\rot\psi_{xx}$ and integrate over $\Omega$. Then we have
\begin{equation}
{1\over2}{d\over dt}|\rot\psi_{xx}|_2^2+\mu|\nabla\rot\psi_{xx}|_2^2= -\intop_\Omega(\rot\psi\cdot\nabla v)_{xx}\cdot\rot\psi_{xx}dx\equiv-J.
\label{3.36}
\end{equation}
Performing differentiations in $J$ implies
$$\eqal{
J&=\intop_\Omega(\rot\psi_{xx}\cdot\nabla v+2\rot\psi_x\cdot\nabla v_x+\rot\psi\cdot\nabla v_{xx})\cdot\rot\psi_{xx}dx\cr
&\equiv J_1+J_2+J_3.\cr}
$$
Integration by parts in $J_1$ gives
$$
|J_1|=\bigg|\intop_\Omega\rot\psi_{xx}\cdot\nabla\rot\psi_{xx}\cdot vdx\bigg|\le\varepsilon|\nabla\rot\psi_{xx}|_2^2+c/\varepsilon |\rot\psi_{xx}|_3^2|v|_6^2.
$$
Proceeding, we have
$$\eqal{
|J_2|&=2\bigg|\intop_\Omega\rot\psi_x\cdot\nabla\rot\psi_{xx}\cdot v_xdx\bigg|\le\varepsilon|\nabla\rot\psi_{xx}|_2^2\cr
&\quad+c/\varepsilon|\rot\psi_x|_3^2|v_x|_6^2.\cr}
$$
Finally,
$$\eqal{
J_3&=\intop_\Omega\rot\psi\cdot\nabla(\rot\psi_{xx}+\nabla\varphi_{xx})\cdot \rot\psi_{xx}dx\cr
&=\intop_\Omega\rot\psi\cdot\nabla\nabla\varphi_{xx}\cdot\rot\psi_{xx}dx= -\intop_\Omega\rot\psi\cdot\nabla\rot\psi_{xx}\cdot\nabla\varphi_{xx}dx.\cr}
$$
Hence, we have
$$
|J_3|\le\varepsilon|\nabla\rot\psi_{xx}|_2^2+{c\over\varepsilon} |\rot\psi|_6^2|\nabla\varphi_{xx}|_3^2.
$$
Using the above estimates in (\ref{3.36}) integrating the result with respect to time, exploiting (\ref{3.27}), (\ref{2.27}) and (\ref{3.34}) we obtain the inequality
\begin{equation}\eqal{
&|\rot\psi_{xx}(t)|_2^2+\mu|\nabla\rot\psi_{xx}|_{2,\Omega^t}^2\le c|\rot\psi_{xx}|_{3,2,\Omega^t}^2D_1^2\cr
&\quad+c|\rot\psi_x|_{3,\infty,\Omega^t}^2(\phi_2+\nu|\nabla\varphi_x(0)|_2^2+ |\rot\psi_x(0)|_2^2)\cr
&\quad+c|\nabla\varphi_{xx}|_3^2\bigg(D_1^2+{\chi_0^2\over\nu}\bigg)+ |\rot\psi_{xx}(0)|_2^2.\cr}
\label{3.37}
\end{equation}
Multiplying (\ref{3.37}) by $1/\nu$ and adding to (\ref{3.35}) imply
\begin{equation}\eqal{
&|\nabla\varphi_{xx}(t)|_2^2+{1\over\nu}|\rot\psi_{xx}(t)|_2^2+\mu \bigg(|\nabla^2\varphi_{xx}|_{2,\Omega^t}^2+{1\over\nu} |\nabla\rot\psi_{xx}|_{2,\Omega^t}^2\bigg)\cr
&\quad+\nu|\Delta\varphi_{xx}|_{2,\Omega^t}^2\le{c\over\nu} |\rot\psi_{xx}|_{3,2,\Omega^t}^2\bigg(D_1^2+{\chi_0^2\over\nu}\bigg)\cr
&\quad+{c\over\nu}|\rot\psi_x|_{3,\infty,\Omega^t}^2(\phi_2+\nu |\nabla\varphi_x(0)|_2^2+|\rot\psi_x(0)|_2^2)\cr
&\quad+{c\over\nu}|\nabla\varphi_{xx}|_{3,2,\Omega^t}^2\bigg(D_1^2+ {\chi_0^2\over\nu}\bigg)+|\nabla\varphi_{xx}(0)|_2^2+{1\over\nu} |\rot\psi_{xx}(0)|_2^2.\cr}
\label{3.38}
\end{equation}
Recall the interpolation
$$\eqal{
&|\rot\psi_{xx}|_{3,2,\Omega^t}^2\bigg(D_1^2+{\chi_0^2\over\nu}\bigg)\le c|\nabla\rot\psi_{xx}|_{2,\Omega^t}^{3/2}|\rot\psi_x|_{2,\Omega^t}^{1/2} \bigg(D_1^2+{\chi_0^2\over\nu}\bigg)\cr
&\le\varepsilon|\nabla\rot\psi_{xx}|_{2,\Omega^t}^2+c(1/\varepsilon) |\rot\psi_x|_{2,\Omega^t}^2\bigg(D_1^2+{\chi_0^2\over\nu}\bigg)^4,\cr}
$$
where in the second term we use the estimate
$$\eqal{
|\rot\psi_x|_{2,\Omega^t}&=|\rot\psi_x+\nabla\varphi_x-\nabla\varphi_x|_{2,\Omega^t} \le c(|v_x|_{2,\Omega^t}+|\nabla\varphi_x|_{2,\Omega^t})\cr
&\le c\bigg(A_1+{\Psi\over\nu}\bigg).\cr}
$$
Next we consider
$$\eqal{
&|\nabla\varphi_{xx}|_{3,2,\Omega^t}^2\bigg(D_1^2+{\chi_0^2\over\nu}\bigg)\le c|\nabla^2\varphi_{xx}|_{2,\Omega^t}^{3/2}|\nabla\varphi_x|_{2,\Omega^t}^{1/2} \bigg(D_1^2+{\chi_0^2\over\nu}\bigg)\cr
&\le\varepsilon|\nabla^2\varphi_{xx}|_{2,\Omega^t}^2+c(1/\varepsilon){\Psi^2\over\nu^2} \bigg(D_1^2+{\chi_0^2\over\nu}\bigg)^4.\cr}
$$
To estimate the second term on the r.h.s. of (\ref{3.38}) we use the interpolation
$$\eqal{
|\rot\psi_x|_{3,\infty,\Omega^t}^2&\le c|\nabla\rot\psi_x|_{2,\infty,\Omega^t}^{3/2}|\rot\psi|_{2,\infty,\Omega^t}^{1/2}\le \varepsilon|\nabla\rot\psi_x|_{2,\infty,\Omega^t}^2\cr
&\quad+c(1/\varepsilon)|\rot\psi|_{2,\infty,\Omega^t}^2\cr}
$$
where
$$
|\rot\psi|_{2,\infty,\Omega^t}=|\rot\psi+\nabla\varphi- \nabla\varphi|_{2,\infty,\Omega^t}\le c\bigg(A_1+{\chi_0\over\sqrt{\nu}}\bigg).
$$
Therefore the second term on the r.h.s. of (\ref{3.38}) is bounded by
$$
{\varepsilon\over\nu}|\nabla\rot\psi_x|_{2,\infty,\Omega^t}^2+{c(1/\varepsilon)\over\nu} \bigg(A_1^2+{\chi_0^2\over\nu}\bigg)(\phi_2+\nu|\nabla\varphi_x(0)|_2^2+ |\rot\psi_x(0)|_2^2)^4.
$$
Using the above estimates in (\ref{3.38}) yields
\begin{equation}\eqal{
&|\nabla\varphi_{xx}(t)|_2^2+{1\over\nu}|\rot\psi_{xx}(t)|_2^2+\mu \bigg(|\nabla^2\varphi_{xx}|_{2,\Omega^t}^2+{1\over\nu} |\nabla\rot\psi_{xx}|_{2,\Omega^t}^2\bigg)\cr
&\quad+\nu|\Delta\varphi_{xx}|_{2,\Omega^t}^2\le{c\over\nu}\bigg(A_1^2+ {\Psi^2\over\nu^2}\bigg)\bigg(D_1^2+{\chi_0^2\over\nu}\bigg)^4\cr
&\quad+{c\over\nu}\bigg(A_1^2+{\chi_0^2\over\nu}\bigg)(\phi_2+\nu |\nabla\varphi_x(0)|_2^2+|\rot\psi_x(0)|_2^2)^4\cr
&\quad+{c\over\nu}{\Psi^2\over\nu^2}\bigg(D_1^2+{\chi_0^2\over\nu}\bigg)^4+ |\nabla\varphi_{xx}(0)|_2^2+{1\over\nu}|\rot\psi_{xx}(0)|_2^2\cr
&\equiv{c\over\nu}\phi_4\bigg(A_1,D_1,{\Psi\over\nu},{\chi_0\over\sqrt{\nu}}, \phi_2+X^2(0)\bigg)+|\nabla\varphi_{xx}(0)|_2^2\cr
&\quad+{1\over\nu}|\rot\psi_{xx}(0)|_2^2\cr}
\label{3.39}
\end{equation}
This inequality implies (\ref{3.32}) and concludes the proof.
\end{proof}

\noindent
To prove a global estimate for solutins to problem (\ref{1.7}) we have to collect all inequalities derived in Lemmas \ref{l3.1}--\ref{l3.5}. This is a topic of the result

\begin{theorem}\label{t3.6}
Assume that $\nabla\varphi(0),\rot\psi(0)\in\Gamma_1^2(\Omega)$ ($\Gamma_1^2(\Omega)$ is defined at the beginning of Section \ref{s2}).\\
Then for $\nu=\nu_*$ sufficiently large and $t\le T<\nu_*^\beta$, $\beta<2(1-\varkappa)$, $\varkappa\in(1/2,1)$, there exists a constant $A=A(\nu_*,X(0)$ (sufficiently large, see (\ref{3.54})) such that
\begin{equation}
X(t)\le A\quad {\rm for}\ t\le T.
\label{3.40}
\end{equation}
We have to emphasize that the bound $A$ in (\ref{3.40}) can be kept the same if $\nu>\nu_*$ is increasing.
\end{theorem}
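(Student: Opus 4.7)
The strategy is to sum the inequalities from Lemmas \ref{l3.1}--\ref{l3.5} into a single bound for $X^2(t)$ consisting of initial data plus a nonlinear function of $X$, and then to close the estimate by a continuity/bootstrap argument valid for $\nu$ large enough. The structure is precisely that outlined schematically in (\ref{1.20}).

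First I would observe that every ``unknown'' quantity on the right-hand sides of Lemmas \ref{l3.1}--\ref{l3.5} is controlled by $X(t)$ itself: by definition $\Psi(t)=\nu|\nabla\varphi|_{3,1,2,\Omega^t}\le X(t)$ and $\chi_0=\sqrt{\nu}|\nabla\varphi|_{2,1,\infty,\Omega^t}\le X(t)$, while $A_1$ is controlled by Lemma \ref{l2.1}. The quantities $D_1$ and $D_2$ from (\ref{2.27}) and (\ref{2.28}) then become functions of $\Psi,\chi_0,A_1$ and of the exponential factor $\phi_1$. Adding (\ref{3.1}), (\ref{3.8}), (\ref{3.14}), (\ref{3.24}) and (\ref{3.32}) with appropriate relative weights (so that the individual components of $X^2$ assemble on the left-hand side, cf.\ Notation \ref{n1.7}) yields an inequality of the schematic form
$$
X^2(t)\le X^2(0)+\phi\bigl(D_1(X),D_2(X),X/\sqrt{\nu},X/\nu\bigr),
$$
where $\phi$ is an increasing polynomial-type function of its arguments.

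Second, I need $\phi_1$ to remain bounded. Since $\Psi\le X$ and $t\le T\le\nu^\beta$ with $\beta<2(1-\varkappa)$, the quantity $(\sqrt{t}/\nu)\Psi$ is bounded by $X\cdot\nu^{-(1-\beta/2)}$, and $1-\beta/2>\varkappa$. If $X(t)\le A$ with $A$ growing more slowly in $\nu$ than $\nu^{1-\beta/2-\varkappa}$, then $(\sqrt{t}/\nu)\Psi$ is negligible compared to the lower bound $c_3/\nu^\varkappa$ on $|\varphi(0)|_p$, so $(\mu+\nu)^\varkappa(|\varphi(0)|_p-(\sqrt{t}/\nu)\Psi)\ge c_2>0$ uniformly in $t\in[0,T]$. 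Combined with the bound on $|v|_{2p/(p-1),2/(1-\varkappa),\Omega^t}$ obtained by interpolation from Lemma \ref{l2.1} and the energy part of $X$, this gives $\phi_1\le C(\nu_*,A_1)$.

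Third, I would close the estimate by a continuity argument. Fix a candidate $A=A(\nu_*,X(0))$ satisfying (\ref{3.54}) and let $T^*\in(0,\nu^\beta]$ be the supremum of times for which $X(t)\le A$ on $[0,T^*]$; by continuity and the initial assumption, $T^*>0$. On $[0,T^*]$ substitute $X\le A$ into the right-hand side. For $\nu=\nu_*$ sufficiently large the factors $A/\sqrt{\nu_*}$, $A/\nu_*$ make the nonlinear contribution strictly smaller than $A^2/2$, whence $X(t)\le A/\sqrt{2}<A$ strictly; continuity then forces $T^*=\nu^\beta$. Monotonicity in $\nu$ of the right-hand side (negative powers of $\nu$ only) gives the final statement that the same $A$ works for any $\nu\ge\nu_*$.

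The main obstacle is the bookkeeping of powers of $\nu$ in the coefficients of Lemmas \ref{l3.1}--\ref{l3.5} together with the exponential factor $\phi_1$. One must check that each nonlinear right-hand side really carries at least one negative power of $\nu$ once $\Psi,\chi_0$ are replaced by $X$, and that the time window $T\le\nu^\beta$ with $\beta<2(1-\varkappa)$ is simultaneously long enough for the later use in Theorem \ref{t1.2} (via the inequality $cA^4\le\mu T$) and short enough to keep $\phi_1$ uniformly bounded. This balance, and the avoidance of circularity between the bootstrap hypothesis $X\le A$ and the bound on $\phi_1$ it enables, is the delicate point of the argument.
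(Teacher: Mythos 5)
Your proposal assembles the estimate in exactly the way the paper does---summing Lemmas \ref{l3.1}--\ref{l3.5}, recognizing that $\Psi,\chi_0\le X$ and that $D_1,D_2$ become functions of $X$, and arriving at the self-referential inequality $X\le\phi(D_1(X),D_2(X),X/\nu,X/\sqrt{\nu},\cdot)+X(0)$ of (\ref{3.51})---but you close it differently. The paper resolves the implicit inequality by studying the algebraic equation (\ref{3.52}), showing that for $\nu$ large the map $A\mapsto\phi(D_1(A),\dots)+X(0)$ is a contraction (each Lipschitz constant carries a negative power of $\nu$ once $t\le\nu^\beta$, $\beta<2(1-\varkappa)$ is imposed), and obtaining $A$ by successive approximations subject to the lower bound (\ref{3.54}). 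You instead run a continuity/bootstrap argument in time on the set $\{t:X(t)\le A\}$. Both routes work and rest on the same two facts: the genuinely nonlinear terms decay in $\nu$, while the $\nu$-independent residue of $\phi$ (coming from $D_1|_{\nu=\infty}=c_1|v(0)|_6+A_1$ and $D_2|_{\nu=\infty}$) must be dominated by $A$, which is precisely condition (\ref{3.54}) that you correctly invoke. The paper's fixed-point formulation has the advantage of being a pure a priori statement (no solution, hence no continuity of $t\mapsto X(t)$, is presupposed), which matters here since the author explicitly defers existence to a separate successive-approximation argument; your bootstrap is the natural companion to a continuation argument once a local solution is in hand. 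One imprecision worth fixing: your claim that the large-$\nu$ factors make ``the nonlinear contribution strictly smaller than $A^2/2$'' cannot be literally true, because the contribution $\phi(D_1(A),D_2(A),0,0,B(0))$ survives as $\nu\to\infty$; the correct statement is that the right-hand side tends to $\phi(\dots)|_{\nu=\infty}+X(0)<A$ by (\ref{3.54}), so that for $\nu\ge\nu_*$ it stays strictly below $A$. Your handling of $\phi_1$ via $(\sqrt{t}/\nu)\Psi\le A\nu^{\beta/2-1}\ll c_3\nu^{-\varkappa}$ matches the paper's use of the window $\beta<2(1-\varkappa)$.
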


\begin{proof}
From Lemma \ref{l3.1} we have
\begin{equation}\eqal{
&\nu|\nabla\varphi(t)|_2^2+|\rot\psi(t)|_2^2+\mu(\nu|\nabla^2\varphi|_{2,\Omega^t}^2+ |\nabla\rot\psi|_{2,\Omega^t}^2)\cr
&\quad+\nu^2|\Delta\varphi|_{2,\Omega^t}^2\le cA_1^2D_1^4+\nu|\nabla\varphi(0)|_2^2+|\rot\psi(0)|_2^2.\cr}
\label{3.41}
\end{equation}
Lemma \ref{l3.2} yields
\begin{equation}\eqal{
&\nu|\nabla\varphi_t(t)|_2^2+|\rot\psi_t(t)|_2^2+\mu(\nu |\nabla^2\varphi_t|_{2,\Omega^t}^2+|\nabla\rot\psi_t|_{2,\Omega^t}^2)\cr
&\quad+\nu^2|\Delta\varphi_t|_{2,\Omega^t}^2\le c\phi_1+\nu|\nabla\varphi_t(0)|_2^2+|\rot\psi_t(0)|_2^2,\cr}
\label{3.42}
\end{equation}
where
$$
\phi_1=\bigg(D_1^2+{\chi_0^2\over\nu}\bigg)\bigg(D_2^2+{\Psi^2\over\nu^2}\bigg).
$$
Next Lemma \ref{l3.3} implies
\begin{equation}\eqal{
&\nu|\nabla\varphi_x(t)|_2^2+|\rot\psi_x(t)|_2^2+\mu(\nu |\nabla^2\varphi_x|_{2,\Omega^t}^2+|\nabla\rot\psi_x|_{2,\Omega^t}^2)\cr
&\quad+\nu^2|\Delta\varphi_x|_{2,\Omega^t}^2\le c\phi_2+\nu|\nabla\varphi_x(0)|_2^2+|\rot\psi_x(0)|_2^2,\cr}
\label{3.43}
\end{equation}
where
$$
\phi_2=A_1^2\bigg(D_1^4+D_1^8+{\chi_0^4\over\nu^2}\bigg)+D_1^4\bigg( |\nabla\varphi(0)|_2^2+{1\over\nu}|\rot\psi(0)|_2^2\bigg).
$$
Lemma \ref{l3.4} gives
\begin{equation}\eqal{
&\nu|\nabla\varphi_{xt}(t)|_2^2+|\rot\psi_{xt}(t)|_2^2+\mu(\nu |\nabla^2\varphi_{xt}|_{2,\Omega^t}^2+|\nabla\rot\psi_{xt}|_{2,\Omega^t}^2)\cr
&\quad+\nu^2|\Delta\varphi_{xt}|_{2,\Omega^t}^2\le cD_2^2|\rot\psi_x|_{3,\infty,\Omega^t}^2+c\phi_3\cr
&\quad+\nu|\nabla\varphi_{xt}(0)|_2^2+|\rot\psi_{xt}(0)|_2^2,\cr}
\label{3.44}
\end{equation}
where
$$\eqal{
\phi_3&=\bigg(D_1^4+{\chi_0^4\over\nu^2}\bigg)(\phi_1+\nu |\nabla\varphi_t(0)|_2^2+|\rot\psi_t(0)|_2^2)\cr
&\quad+\bigg(D_2^2+{\chi_0^2\over\nu}\bigg)(\phi_2+\nu|\nabla\varphi_x(0)|_2^2+ |\rot\psi_x(0)|_2^2)+\bigg(D_1^2+{\chi_0^2\over\nu}\bigg){\Psi^2\over\nu^2}.\cr}
$$
Finally, Lemma \ref{l3.5} yields
\begin{equation}\eqal{
&\nu|\nabla\varphi_{xx}(t)|_2^2+|\rot\psi_{xx}(t)|_2^2+\mu(\nu |\nabla^2\varphi_{xx}|_{2,\Omega^t}^2+|\nabla\rot\psi_{xx}|_{2,\Omega^t}^2)\cr
&\quad+\nu^2|\Delta\varphi_{xx}|_{2,\Omega^t}^2\le c\phi_4+\nu|\nabla\varphi_{xx}(0)|_2^2+|\rot\psi_{xx}(0)|_2^2,\cr}
\label{3.45}
\end{equation}
where
$$\eqal{
\phi_4&=\bigg(A_1^2+{\Psi^2\over\nu^2}\bigg)\bigg(D_1^2+{\chi_0^2\over\nu}\bigg)^4+ \bigg(A_1^2+{\chi_0^2\over\nu}\bigg)(\phi_2\cr
&\quad+\nu|\nabla\varphi_x(0)|_2^2+ |\rot\psi_x(0)|_2^2)^4+{\Psi^2\over\nu^2}\bigg(D_1^2+{\chi_0^2\over\nu}\bigg)^4.\cr}
$$
Using (\ref{3.45}) we will be able to estimate the first term on the r.h.s. of (\ref{3.44}). For this purpose we use the interpolation
\begin{equation}\eqal{
&D_2^2|\rot\psi_x|_{3,\infty,\Omega^t}^2\le c|\nabla\rot\psi_x|_{2,\infty,\Omega^t}|\rot\psi_x|_{2,\infty,\Omega^t}D_2^2\cr
&\le\varepsilon|\nabla\rot\psi_x|_{2,\infty,\Omega^t}^2+c/\varepsilon |\rot\psi_x|_{2,\infty,\Omega^t}^2D_2^4,\cr}
\label{3.46}
\end{equation}
where we need to use (\ref{3.43}) to have the estimate
\begin{equation}
|\rot\psi_x|_{2,\infty,\Omega^t}^2\le c(\phi_2+\nu|\nabla\varphi_x(0)|_2^2+ |\rot\psi_x(0)|_2^2).
\label{3.47}
\end{equation}
Summing up (\ref{3.44}) and (\ref{3.45}), using (\ref{3.46}) and (\ref{3.47}) we obtain the inequality
\begin{equation}\eqal{
&\nu(|\nabla\varphi_{xt}(t)|_2^2+|\nabla\varphi_{xx}(t)|_2^2)+|\rot\psi_{xt}(t)|_2^2+ |\rot\psi_{xx}(t)|_2^2\cr
&\quad+\mu[\nu(|\nabla^2\varphi_{xt}|_{2,\Omega^t}^2+ |\nabla^2\varphi_{xx}|_{2,\Omega^t}^2)+|\nabla\rot\psi_{xt}|_{2,\Omega^t}^2\cr
&\quad+|\nabla\rot\psi_{xx}|_{2,\Omega^t}^2]+\nu^2 (|\Delta\varphi_{xt}|_{2,\Omega^t}^2+|\Delta\varphi_{xx}|_{2,\Omega^t}^2)\cr
&\le c(\phi_2+\nu|\nabla\varphi_x(0)|_2^2+|\rot\psi_x(0)|_2^2)D_2^4+c(\phi_3+\phi_4)\cr
&\quad+\nu(|\nabla\varphi_{xt}(0)|_2^2+|\nabla\varphi_{xx}(0)|_2^2)+ |\rot\psi_{xt}(0)|_2^2+|\rot\psi_{xx}(0)|_2^2\cr
&\equiv c\phi_5+\nu(|\nabla\varphi_{xt}(0)|_2^2+|\nabla\varphi_{xx}(0)|_2^2)+ |\rot\psi_{xt}(0)|_2^2+|\rot\psi_{xx}(0)|_2^2.\cr}
\label{3.48}
\end{equation}
Finally, from (\ref{3.41}), (\ref{3.42}), (\ref{3.43}) and (\ref{3.48}) we derive the inequality
\begin{equation}
X^2(t)\le c(A_1^2D_1^4+\phi_1+\phi_2+\phi_5)+X^2(0).
\label{3.49}
\end{equation}
Introducing the notation
$$
B(0)=\nu|\nabla\varphi(0)|_{1,1}^2+|\rot\psi(0)|_{1,1}^2
$$
and using the explicit forms of $\phi_1$, $\phi_2$, $\phi_5$, inequality (\ref{3.49}) takes the form
\begin{equation}\eqal{
X^2(t)&\le c\bigg[\bigg(A_1^2+{\Psi^2\over\nu^2}\bigg)\bigg(D_1^2+{\chi_0^2\over\nu}\bigg)^4\cr
&\quad+ \bigg(1+D_1^4+{\chi_0^4\over\nu^2}\bigg)\bigg(D_1^2+{\chi_0^2\over\nu}\bigg) \bigg(D_2^2+{\Psi^2\over\nu^2}\bigg)\cr
&\quad+A_1^2\bigg(1+D_2^2+D_2^4+{\chi_0^2\over\nu}\bigg)\bigg(D_1^4+D_1^8+ {\chi_0^4\over\nu^2}\bigg)\cr
&\quad+A_1^8\bigg(A_1^2+{\chi_0^2\over\nu}\bigg)\bigg(D_1^4+ D_1^8+{\chi_0^4\over\nu^2}\bigg)^4+\bigg(A_1^2+{\chi_0^2\over\nu}\bigg)(1+D_1^{16})B^4(0)\cr
&\quad+(1+D_1^4)\bigg(D_2^2+ D_2^4+{\chi_0^2\over\nu}+D_1^4+{\chi_0^4\over\nu^2}\bigg)B(0)\cr
&\quad+X^2(0)\bigg]\equiv\phi^2\bigg(D_1,D_2,{\Psi\over\nu},{\chi_0\over\sqrt{\nu}}, B(0)\bigg)+X^2(0).\cr}
\label{3.50}
\end{equation}

\noindent
Using (\ref{2.25}) in (\ref{2.27}) we have the estimate
$$\eqal{
D_1&\le c_1\exp\bigg({X^{2/(1-\varkappa)}\over [c_3-{\sqrt{t}\over\nu^{1-\varkappa}}X]^{1/(1-\varkappa)}}\bigg) \bigg({X\over\nu^{1/3}}+{X^{2/3}\over\nu^{(\varkappa-1/2)/3}}\bigg)+c_1|v(0)|_6+ A_1\cr
&\equiv D_1(X)\cr}
$$
Similarly
$$
D_2=|v_t(0)|_2\exp[(D_1^2A_1^2)/2]\le D_2(X).
$$
Then (\ref{3.50}) implies
\begin{equation}
X(t)\le\phi\bigg(D_1(X),D_2(X),{X\over\nu},{X\over\sqrt{\nu}},B(0)\bigg)+X(0).
\label{3.51}
\end{equation}
Consider the algebraic equation
\begin{equation}
A=\phi\bigg(D_1(A),D_2(A),{A\over\nu},{A\over\sqrt{\nu}},B(0)\bigg)+X(0),
\label{3.52}
\end{equation}
where $\phi\colon\R_+\to\R_+$.

\noindent
If we show the existence of bounded solutions to (\ref{3.52}) such that $A\le A_*$ we get the estimate
\begin{equation}
X\le A.
\label{3.53}
\end{equation}
If we show that $\phi$ is a contraction then the existence of solutions to (\ref{3.52}) follows from the method of successive approximations.

\noindent
Since $\phi$ is a differentiable function of its arguments we restrict our considerations to the case $\phi=\phi(D_1(A))$, because the dependence of the other arguments can be examined similarly. Then we have
$$\eqal{
&|\phi(D_1(A))-\phi(D_1(A'))|\le c|D_1(A)-D_1(A')|\cr
&\le c{|A-A'|\over[c_3-{\sqrt{t}\over\nu^{1-\varkappa}}A]^{1/(1-\varkappa)}} \bigg({A\over\nu^{1/3}} + {A\over\nu^{(\varkappa-1/2)/3}}\bigg)+\,{\rm other\ terms}.\cr}
$$
The first term is bounded by
$$
{c\over\nu^\alpha}|A-A'|,\quad \alpha>0,
$$
where $t\le\nu^\beta$, $\beta<2(1-\varkappa)$. The other terms can be treated similarly. Hence for $\nu$ sufficiently large operator $\phi$ is a contraction.

\noindent
To perform the method of successive approximations we have to find a lower bound for $A$. The lower bound for $A$ must be greater than the r.h.s. of (\ref{3.52}) at $\nu=\infty$. We have
$$\eqal{
&D_1|_{\nu=\infty}=c_1|v(0)|_6+A_1,\cr
&D_2|_{\nu=\infty}=|v_t(0)|_2\exp[(c_1|v(0)|_6+A_1)A_1^2/2]\cr}
$$
Then
\begin{equation}\eqal{
&A>\phi(c_1|v(0)|_2+A_1,|v_t(0)|_2\exp[(c_1|v(0)|_6+A_1)A_1^2/2],\cr
&0,0,B(0))+X(0).\cr}
\label{3.54}
\end{equation}
This concludes the proof.
\end{proof}

\begin{remark}\label{r3.7}
To eliminate the dependence of $t$ in the r.h.s. of (\ref{3.52}) we set $t=\nu^{1-\varkappa}$. Since the dependence appears in $D_1(A)$ only we will eliminate the explicit dependence on time in (\ref{3.51}). Then
$$\eqal{
D_1(A)&=c_1\exp\bigg( {A^{2/(1-\varkappa)}\over[c_3-A/\nu^{(1-\varkappa)/2}]^{1/(1-\varkappa)}}\bigg) \bigg({A\over\nu^{1/3}}+{A^{2/3}\over\nu^{(\varkappa-1/2)/3}}\bigg)\cr
&\quad+c_1|v(0)|_r+A_1.\cr}
$$
Assume that for $\nu=\nu_*$ we found a bound $A_*$ for $A$. We can increase $A$ by $kA$, $k>1$ and also $\nu$ by $k_*\nu$, $k_*>1$, in such a way that the r.h.s. of (\ref{3.52}) does not increase. Then we obtain the same bound $A_*$ but time of existence increases to $t=(k_*\nu_*)^{1-\varkappa}$. Since such $k$ and $k_*$ can be chosen as large as we want the same bound $A_*$ for $A$ holds for $t$ and $\nu$ arbitrary large.
\end{remark}

\section{Global estimate and existence}\label{s4}

In Theorem \ref{t3.6} we proved the existence of long time solutions to problem (\ref{1.7}). However, to prove global existence of solutions to the Navier-Stokes equations (\ref{1.1}) we need to have global existence of solutions to (\ref{1.7}). For this purpose we use the two steps in time technique. We have to emphasize that in this paper any problem of existence is restricted to derive an appropriate estimate. Hence we need the result

\begin{lemma}\label{l4.1}
Let the assumptions of Theorem \ref{t3.6} hold. Let $A$ and $T$ be defined in Theorem \ref{t3.6}. Let
$$
cA^4<{\mu\over 2} T
$$
Let
$$
Y^2(t)=\nu|\nabla\varphi(t)|_{2,1}^2+|\rot\psi(t)|_{2,1}^2
$$
Then
\begin{equation}
Y^2(T)\le\exp\bigg(-{\mu\over 2}T\bigg)Y^2(0).
\label{4.1}
\end{equation}
\end{lemma}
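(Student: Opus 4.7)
My strategy is a Gronwall argument: derive a differential inequality for $Y^2(t)$ with an exponential-decay coefficient furnished by Poincar\'e's inequality, and treat the convective nonlinearity as a perturbation controlled by the a~priori bound $X(t)\le A$ from Theorem~\ref{t3.6}. The first step is to redo the energy estimates of Lemmas~\ref{l3.1}--\ref{l3.5} but stop \emph{before} integrating in time, and then sum them with exactly the weights used in Theorem~\ref{t3.6} (multiplying each $\rot\psi$-identity by $1/\nu$). This produces a pointwise-in-time inequality
\begin{equation*}
\frac{d}{dt}Y^2(t)+\mu\,\mathcal{D}(t)\le g(t)\,Y^2(t),
\end{equation*}
where $\mathcal{D}(t)$ is the full $H^3$-type dissipation (weighted by $\nu$) of $\nabla\varphi$, $\rot\psi$ and their first time derivatives, and $g(t)$ bundles the coefficients that multiply $Y^2$ after the convective term is handled by the same Cauchy--Schwarz/Young steps used in Lemmas~\ref{l3.1}--\ref{l3.5} (e.g.\ $D_1^2$, $D_2^2$, and powers of $|\rot\psi|_6$, $|\nabla\varphi|_6$).

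Periodicity together with $\int_\Omega\nabla\varphi\,dx=\int_\Omega\rot\psi\,dx=0$ permits Poincar\'e, so $\mathcal{D}(t)\ge c_P^{-1}Y^2(t)$; absorbing $c_P$ into the constant of the hypothesis $cA^4\le\mu T/2$, the inequality becomes $\frac{d}{dt}Y^2\le(g(t)-\mu)Y^2$, and Gronwall gives
\begin{equation*}
Y^2(T)\le Y^2(0)\exp\!\Big(\!-\mu T+\textstyle\int_0^T g(s)\,ds\Big).
\end{equation*}
The content of~\eqref{4.13} should be the bookkeeping estimate $\int_0^T g(s)\,ds\le cA^4$; combining it with the hypothesis yields $Y^2(T)\le Y^2(0)\exp(cA^4-\mu T)\le Y^2(0)\exp(-\mu T/2)$, which is~\eqref{4.1}.

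The main obstacle is proving $\int_0^T g(s)\,ds\le cA^4$. A pointwise bound $g(t)\le cA^{\alpha}$ would only produce $\int g\le cA^{\alpha}T$, which would destroy the decay. The key observation is that each factor arising in $g(t)$, once treated by interpolation and Sobolev embedding, can be paired as a pointwise-in-time term (controlled by $Y(t)\le X(t)\le A$) times a term that lives in $L_2$ in time and is controlled by the \emph{time-integrated} part of $X^2$ --- namely $\mu\,|\rot\psi|_{3,1,2,\Omega^T}^2\le A^2$ and $\mu\nu\,|\nabla\varphi|_{3,1,2,\Omega^T}^2\le A^2$, both already built into the definition of $X^2$ in Notation~\ref{n1.7}. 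Tracking these pairings systematically through all the terms inherited from Lemmas~\ref{l3.1}--\ref{l3.5}, but keeping only the time-integrable combinations, delivers the required $\int_0^T g\le cA^4$.
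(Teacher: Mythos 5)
Your proposal follows essentially the same route as the paper: the paper likewise returns to the un-integrated energy identities of Lemmas \ref{l3.1}--\ref{l3.5}, absorbs the convective terms via the interpolation (\ref{4.7}) and Poincar\'e's inequality to reach ${d\over dt}Y^2+\mu Y^2\le cY^2(|v|_6^4+|v_x|_6^4+|v_t|_6^4+|\rot\psi|_6^4)$ (inequality (\ref{4.13})), and then bounds the time integral of the coefficient by $c|v|_{2,1,\infty,\Omega^t}^2\intop_0^t|v(t')|_{2,1}^2dt'\le cA^4$ --- precisely the sup-in-time times $L_2$-in-time pairing you describe --- before applying Gronwall and the hypothesis $cA^4\le\mu T/2$. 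The argument is correct and matches the paper's proof in all essential respects.
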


\noindent
This means that the initial data for solutions to problem (\ref{1.7}) for time interval $[T,\infty)$ are small for large $T$. This suggests the existence of global solutions to (\ref{1.7}) in $[T,\infty)$. In Lemma \ref{l4.2} we derive a necessary global estimate.

\begin{proof}
From (\ref{3.3}) and (\ref{3.5}) we have
\begin{equation}\eqal{
&{d\over dt}(\nu|\nabla\varphi|_2^2+|\rot\psi|_2^2)+\mu(\nu|\nabla^2\varphi|_2^2+ |\nabla\rot\psi|_2^2)\cr
&\le c|\rot\psi|_3^2|v|_6^2.\cr}
\label{4.2}
\end{equation}
Next, (\ref{3.9}) and (\ref{3.11}) imply
\begin{equation}\eqal{
&{d\over dt}(\nu|\nabla\varphi_t|_2^2+|\rot\psi_t|_2^2)+\mu(\nu|\nabla^2\varphi_t|_2^2+ |\nabla\rot\psi_t|_2^2)\cr
&\le c(|\rot\psi|_3^2|v_t|_6^2+|\rot\psi_t|_3^2|v|_6^2).\cr}
\label{4.3}
\end{equation}
From (\ref{3.15}) and (\ref{3.19}) it follows
\begin{equation}\eqal{
&{d\over dt}(\nu|\nabla\varphi_x|_2^2+|\rot\psi_x|_2^2)+\mu(\nu|\nabla^2\varphi_x|_2^2 +|\nabla\rot\psi_x|_2^2)\cr
&\le c(|\rot\psi|_3^2|v_x|_6^2+|\rot\psi_x|_3^2|v|_6^2).\cr}
\label{4.4}
\end{equation}
Next, (\ref{3.25}) and (\ref{3.29}) imply
\begin{equation}\eqal{
&{d\over dt}(\nu|\nabla\varphi_{xt}|_2^2+|\rot\psi_{xt}|_2^2)+\mu (\nu|\nabla^2\varphi_{xt}|_2^2+|\nabla\rot\psi_{xt}|_2^2)\cr
&\le(|\rot\psi_{xt}|_3^2|v|_6^2+|\rot\psi_x|_3^2|v_t|_6^2+|\rot\psi_t|_3^2 |v_x|_6^2\cr
&\quad+|\rot\psi|_6^2|v_{xt}|_3^2).\cr}
\label{4.5}
\end{equation}
Finally, (\ref{3.33}) and (\ref{3.36}) yield
\begin{equation}\eqal{
&{d\over dt}(\nu|\nabla\varphi_{xx}|_2^2+|\rot\psi_{xx}|_2^2)+\mu (\nu|\nabla\varphi_{xx}|_2^2+|\nabla\rot\psi_{xx}|_2^2)\cr
&\le c(|\rot\psi_{xx}|_3^2|v|_6^2+|\rot\psi_x|_3^2|v_x|_6^2+|\rot\psi|_6^2 |v_{xx}|_3^2).\cr}
\label{4.6}
\end{equation}
We use the following interpolation
\begin{equation}
|u|_3^2d^2\le c|\nabla u|_2|u|_2d^2\le\varepsilon|\nabla u|_2^2+c/\varepsilon|u|_2^2d^4,
\label{4.7}
\end{equation}
where $d$ is a constant. Using (\ref{4.7}) in (\ref{4.2}) yields
\begin{equation}
{d\over dt}(\nu|\nabla\varphi|_2^2+|\rot\psi|_2^2)+\mu(\nu|\nabla^2\varphi|_2^2+ |\nabla\rot\psi|_2^2)\le c|\rot\psi|_2^2|v|_6^4.
\label{4.8}
\end{equation}
In view of (\ref{4.7}) we obtain from (\ref{4.3}) and (\ref{4.8}) the inequality
\begin{equation}\eqal{
&{d\over dt}(\nu|\nabla\varphi|_2^2+|\rot\psi|_2^2+\nu|\nabla\varphi_t|_2^2+ |\rot\psi_t|_2^2)\cr
&\quad+\mu(\nu|\nabla^2\varphi|_2^2+|\nabla\rot\psi|_2^2+\nu|\nabla^2\varphi_t|_2^2+ |\nabla\rot\psi_t|_2^2)\cr
&\le c(|\rot\psi|_2^2|v|_6^4+|\rot\psi|_2^2|v_t|_6^4+|\rot\psi_t|_2^2|v|_6^4).\cr}
\label{4.9}
\end{equation}
In view of (\ref{4.7}) inequalities (\ref{4.4}) and (\ref{4.8}) imply
\begin{equation}\eqal{
&{d\over dt}(\nu|\nabla\varphi|_2^2+|\rot\psi|_2^2+\nu|\nabla\varphi_x|_2^2+ |\rot\psi_x|_2^2)\cr
&\quad+\mu(\nu|\nabla^2\varphi|_2^2+|\nabla\rot\psi|_2^2+\nu|\nabla^2\varphi_x|_2^2+ |\nabla\rot\psi_x|_2^2)\cr
&\le c(|\rot\psi|_2^2|v|_6^4+|\rot\psi|_2^2|v_t|_6^4+|\rot\psi|_2^2|v_x|_6^4+ |\rot\psi_x|_2^2|v|_6^4+|\rot\psi_t|_2^2|v|_6^4).\cr}
\label{4.10}
\end{equation}
Using (\ref{4.7}) inequalities (\ref{4.5}), (\ref{4.9}) and (\ref{4.10}) give
\begin{equation}\eqal{
&{d\over dt}(\nu|\nabla\varphi|_2^2+|\rot\psi|_2^2+\nu|\nabla\varphi_t|_2^2+ |\rot\psi_t|_2^2+\nu|\nabla\varphi_x|_2^2+|\rot\psi_x|_2^2\cr
&\quad+\nu|\nabla\varphi_{xt}|_2^2+|\rot\psi_{xt}|_2^2)+\mu(\nu|\nabla^2\varphi|_2^2+ |\nabla\rot\psi|_2^2+\nu|\nabla^2\varphi_t|_2^2\cr
&\quad+|\nabla\rot\psi_t|_2^2+\nu|\nabla^2\varphi_x|_2^2+|\nabla\rot\psi_x|_2^2+ \nu|\nabla^2\varphi_{xt}|_2^2+|\nabla\rot\psi_{xt}|_2^2)\cr
&\le c(|\rot\psi_{xt}|_2^2|v|_6^4+|\rot\psi_x|_2^2|v_t|_6^4+|\rot\psi_t|_2^2|v_x|_6^4\cr
&\quad+|\rot\psi_t|_2^2|v|_6^4+|\rot\psi|_2^2|v|_6^4+|\rot\psi|_2^2|v_t|_6^4+ |\rot\psi|_2^2|v_x|_6^2\cr
&\quad+|v_{xt}|_2^2|\rot\psi|_6^4).\cr}
\label{4.11}
\end{equation}
Finally, (\ref{4.2}), (\ref{4.4}) and (\ref{4.6}) imply
\begin{equation}\eqal{
&{d\over dt}(\nu|\nabla\varphi|_2^2+|\rot\psi|_2^2+\nu|\nabla\varphi_x|_2^2+ |\rot\psi_x|_2^2+\nu|\nabla\varphi_{xx}|_2^2\cr
&\quad+|\rot\psi_{xx}|_2^2)+\mu(\nu|\nabla^2\varphi|_2^2+|\nabla\rot\psi|_2^2+ \nu|\nabla^2\varphi_x|_2^2\cr
&\quad+|\nabla\rot\psi_x|_2^2+\nu|\nabla^2\varphi_{xx}|_2^2+|\nabla\rot\psi_{xx}|_2^2)\cr
&\le c(|\rot\psi_{xx}|_2^2|v|_6^4+|\rot\psi_x|_2^2|v_x|_6^4+|\rot\psi|_2^2|v|_6^4\cr
&\quad+ |\rot\psi|_2^2|v_x|_6^4+|\rot\psi_x|_2^2|v|_6^4+|v_{xx}|_2^2|\rot\psi|_6^4).\cr}
\label{4.12}
\end{equation}
Introduce the quantity
$$
Y^2(t)=\nu|\nabla\varphi(t)|_{2,1}^2+|\rot\psi(t)|_{2,1}^2.
$$
Then inequalities (\ref{4.8})--(\ref{4.12}) imply the inequality
\begin{equation}
{d\over dt}Y^2+\mu Y^2\le cY^2(|v|_6^4+|v_x|_6^4+|v_t|_6^4+|\rot\psi|_6^4).
\label{4.13}
\end{equation}
From (\ref{4.13}) we have
\begin{equation}
{d\over dt}\bigg(Y^2\exp\bigg(\mu t-c|v|_{2,1,\infty,\Omega^t}^2\intop_0^t|v(t')|_{2,1}^2dt'\bigg)\bigg)\le 0.
\label{4.14}
\end{equation}
Hence the decay follows
\begin{equation}
Y^2(t)\le\exp(-\mu t+cA^4)Y^2(0).
\label{4.15}
\end{equation}
For $t=T$, the restriction $cA^4\le{\mu\over 2}T$ and Remark \ref{r3.7} we derive (\ref{4.1}) and conclude the proof.
\end{proof}

\noindent
In Theorem \ref{t3.6} estimate (\ref{3.40}) is proved for $t\le T$, where $T<\nu^{2(1-\varkappa)}$. To prove Theorem \ref{t3.6} we need that $\nu$ is large so $T$ can also be large. To show (\ref{3.40}) we need that the norm $X(0)=Y(0)$ of initial data for solutions to (\ref{1.7}) must be finite. Lemma \ref{l4.1} implies that the norm $Y(T)$ for $T$ sufficiently large satisfies
\begin{equation}
Y(T)\le Y(0).
\label{4.16}
\end{equation}
It suggests that the data at time $T$ to problem (\ref{1.7}) satisfying (\ref{4.16}) can be treated as the initial data for $[T,2T]$ so also the proof of Theorem \ref{t3.6} can be repeated for this interval. However, to prove Theorem \ref{t3.6} for interval $[T,2T]$ we need Lemma \ref{l2.4}. Therefore the following assumptions are necessary
\begin{equation}
{c_3\over\nu^\varkappa}\le|\varphi(T)|_p\le{c_4\over\nu^\varkappa},\quad |\varphi(T)|_2\le{c_4\over\nu^\varkappa}.
\label{4.17}
\end{equation}
But we do not know how satisfy (\ref{4.17}). Therefore we have to derive an estimate of type (\ref{3.40}) for $t>T$ in a different way. This is the topic of the next lemma

\begin{lemma}\label{l4.2}
Assume that $T$ and $\nu$ are large. Assume that $\varphi(T)\in H^3(\Omega)$, $v(T)\in H^2(\Omega)$. Then there exists a constant $B$ such that
$$
\|\varphi(T)\|_3+\|v(T)\|_2<cB,
$$
where $B=\big(e^{-{\mu\over2}T}\|v(0)\|_2+{A\over\sqrt{\nu}}\big)$ and $A$ appears in (\ref{3.40}). Moreover, for $T$ and $\nu$ sufficiently large the following inequality holds
\begin{equation}
\|v\|_{2,\infty,\Omega_T^t}+\|v\|_{3,2,\Omega_T^t}+\|\varphi\|_{3,\infty,\Omega_T^t}+ \|\nabla\varphi\|_{3,2,\Omega_T^t}<cB
\label{4.18}
\end{equation}
for any $t\in(T,\infty)$.
\end{lemma}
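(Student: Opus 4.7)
The plan is to prove the lemma in two parts, matching its two conclusions.

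\textbf{Part 1 (bound at $t=T$).} I apply Lemma~\ref{l4.1}. Under its standing hypothesis $cA^4\le\mu T/2$ one has $Y^2(T)\le e^{-\mu T/2}Y^2(0)$, and the construction of $A$ in Theorem~\ref{t3.6} together with (\ref{3.54}) guarantees $Y(0)\le X(0)\le A$, so $Y(T)\le Ae^{-\mu T/4}$. Choosing $T$ large enough that $e^{-\mu T/4}\le 1/\sqrt{\nu}$ (still compatible with the upper bound $T\le \nu^\beta$ once $\nu$ is large) yields the smallness
\[
\|\rot\psi(T)\|_{H^2(\Omega)}+\sqrt{\nu}\,\|\nabla\varphi(T)\|_{H^2(\Omega)}\le cA/\sqrt{\nu}.
\]
Independently, multiplying (\ref{1.7}) by $v$ and applying Poincar\'e to the zero-mean solution gives the linear dissipation $\frac{d}{dt}|v|_2^2+c\mu|v|_2^2\le 0$, hence $|v(T)|_2\le ce^{-c\mu T}|v(0)|_2$. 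Writing $v=\nabla\varphi+\rot\psi$ and combining the two contributions,
\[
\|v(T)\|_2\le |v(T)|_2+\|\nabla v(T)\|_1\le c\bigl(e^{-\mu T}|v(0)|_2+A/\sqrt{\nu}\bigr)=cB.
\]
The $\|\varphi(T)\|_3$ part is obtained by using Lemma~\ref{l2.4} to bound $|\varphi(T)|_2\le c/\nu^\varkappa\le c/\sqrt{\nu}$ (since $\varkappa>1/2$) and adding the already-proven $\|\nabla\varphi(T)\|_{H^2}\le A/\nu$.

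\textbf{Part 2 (global propagation on $(T,\infty)$).} Treating $v(T),\varphi(T)$ as new data of size $O(B)$ — with $B$ as small as desired by enlarging $T$ and $\nu$ — I would repeat the energy-type estimates of Section~\ref{s3} on the cylinder $\Omega_T^t$, but now replacing the delicate mechanism of Lemma~\ref{l2.4} by a classical small-data argument. Testing (\ref{1.7}) with $v$, $-\Delta v$ and $\Delta^2 v$ and using $H^2\hookrightarrow L^6$ together with the decomposition $v=\nabla\varphi+\rot\psi$, one obtains an inequality of the schematic form
\[
\frac{d}{dt}\|v\|_2^2+c\mu\|v\|_3^2+c\nu\|\nabla\divv v\|_2^2\le C\|v\|_2\,\|v\|_3^2.
\]
Provided $CcB<c\mu/2$, the nonlinearity on the right is absorbed into the dissipation on the left, and a standard continuity-in-time bootstrap delivers $\|v(t)\|_2\le cB$ for every $t>T$, together with $\|v\|_{3,2,\Omega_T^t}\le cB$ and $\sqrt{\nu}\,\|\nabla\varphi\|_{3,2,\Omega_T^t}\le cB$. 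The last quantity, $\|\varphi\|_{3,\infty,\Omega_T^t}\le cB$, is then read off by elliptic regularity in (\ref{1.10}) once $v$ is controlled in $H^2$ uniformly on $(T,t)$.

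\textbf{Main obstacle.} The principal difficulty lies in Part~2. On $(0,T)$ the control of $|\varphi|_2$ went through Lemma~\ref{l2.4}, whose exponential factor $\phi_1$ required the lower bound $c_3/\nu^\varkappa\le|\varphi(0)|_p$; on $(T,\infty)$ this lower bound is lost, precisely because $|\varphi(T)|_p$ is of order $B\ll 1/\nu^\varkappa$. The bootstrap must therefore rest entirely on the smallness of $B$ and on the combined dissipation $\mu\Delta v+\nu\nabla\divv v$. Closing the self-consistency loop — showing that $\|v(T)\|_2\le cB$ really propagates to $\|v(t)\|_2\le cB$ for all $t>T$ without any degradation of the constant $c$, and that the implicit constants remain independent of $t$ — is the delicate step, and is what forces both $T$ and $\nu$ to be taken sufficiently large.
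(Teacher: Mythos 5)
Your proposal is correct and follows the same two-step strategy as the paper: use the decay supplied by Lemma \ref{l4.1} and by the $L_2$ energy identity (\ref{4.19})--(\ref{4.21}) to make the data at $t=T$ small of order $B$, then close a small-data energy estimate on $\Omega_T^t$. The implementation differs in two places. First, where you run a differential inequality plus a continuity-in-time bootstrap, the paper integrates the higher-order energy estimates (\ref{4.24})--(\ref{4.29}) into the single algebraic inequality $X_0(t)\le cX_0^6(t)|v(T)|_2^2+c\|v(T)\|_2^2$ for $X_0(t)=\|v(t)\|_2^2+\mu\|v\|_{3,2,\Omega_T^t}^2+\nu\|\Delta\varphi\|_{2,2,\Omega_T^t}^2$ and closes it by a fixed-point argument ((\ref{4.30})--(\ref{4.31})); the two devices are interchangeable here, and your Sobolev bookkeeping for the worst term (which comes out as $\|v\|_2^{5/2}\|v\|_3^{1/2}$ rather than exactly $\|v\|_2\|v\|_3^2$, but is still absorbable) causes no difficulty. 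Second, you obtain the $\varphi$-norms in (\ref{4.18}) by elliptic regularity for (\ref{1.10}), which indeed gives $\|\varphi\|_{3,\infty,\Omega_T^t}+\|\nabla\varphi\|_{3,2,\Omega_T^t}\le c\|v\|_{2,\infty,\Omega_T^t}+c\|v\|_{3,2,\Omega_T^t}\le cB$ and suffices for the lemma as stated. The paper instead re-runs parabolic energy estimates for $\varphi$ ((\ref{4.32})--(\ref{4.40})); this is longer but yields the stronger bound $\|\varphi(t)\|_3^2+\mu\|\nabla\varphi\|_{3,2,\Omega_T^t}^2\le{c\over\nu}\|v(T)\|_2^4+{c\over\nu}\|\varphi(T)\|_3^2$, i.e.\ an extra factor $1/\nu$, which is what produces the $A^2/\nu^2$ smallness exploited later in Lemmas \ref{l5.3} and \ref{l5.4}. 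So your shortcut proves Lemma \ref{l4.2}, but if the result is to feed into Section \ref{s5} you should either keep the paper's sharper $\varphi$-estimate or recover the $\nu$-gain from the dissipative term $\nu\|\Delta\varphi\|_{2,2,\Omega_T^t}^2\le X_0$. Your closing diagnosis --- that the lower bound $c_3/\nu^\varkappa\le|\varphi(0)|_p$ of Lemma \ref{l2.4} is unavailable on $(T,\infty)$ and must be replaced by pure smallness of the data at $T$ --- is exactly the point of the paper's proof.
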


\begin{proof}
The first statement follows directly from (\ref{4.15}). To derive the estimate we first need to show that data for solutions of (\ref{1.7}) at time $T$ are sufficiently small for sufficiently large $T$. Then we get problem (\ref{1.7}) with small initial data so it is clear that we would be able to show (\ref{3.40}) for any $t>T$.

\noindent
Multiply (\ref{1.7}) by $v$ and integrate over $\Omega$. Then we have
\begin{equation}
{d\over dt}|v|_2^2+\mu|\nabla v|_2^2+\nu|\divv v|_2^2=0.
\label{4.19}
\end{equation}
Since $\intop_\Omega vdx=0$, (\ref{4.19}) yields
$$
{d\over dt}(|v|_2^2e^{\mu t})\le 0
$$
so
\begin{equation}
|v(t)|_2^2\le e^{-\mu t}|v(0)|_2^2.
\label{4.20}
\end{equation}
Let $T$ be large. Then the quantity $|v(T)|_2$
\begin{equation}
|v(T)|_2^2\le e^{-\mu T}|v(0)|_2^2
\label{4.21}
\end{equation}
is small. Integrating (\ref{4.19}) with respect to time from $t=T$ to $t>T$ we get
\begin{equation}
|v(t)|_2^2+\mu|\nabla v|_{2,\Omega_T^t}^2+\nu|\Delta\varphi|_{2,\Omega_T^t}^2= |v(T)|_2^2.
\label{4.22}
\end{equation}
Since
\begin{equation}
\intop_\Omega vdx=0,\quad \intop_\Omega\varphi dx=0
\label{4.23}
\end{equation}
we obtain from (\ref{4.22}) the inequality
\begin{equation}
|v(t)|_2^2+\mu\|v\|_{1,2,\Omega_T^t}^2+\nu\|\varphi\|_{2,2,\Omega_T^t}^2\le c|v(T)|_2^2.
\label{4.24}
\end{equation}
Differentiate $(\ref{1.7})_1$ with respect to $x$, multiply by $v_x$ and integrate over $\Omega$. Then we have
$$\eqal{
&{d\over dt}|v_x|_2^2+\mu|\nabla v_x|_2^2+\nu|\divv v_x|_2^2=-\intop_\Omega (\rot\psi)_x\cdot\nabla v\cdot v_xdx\cr
&=\intop_\Omega(\rot\psi)_x\cdot\nabla v_x\cdot vdx.\cr}
$$
Applying the H\"older and Young inequalities to the r.h.s., integrating with respect to time from $t=T$ to $t>T$ and using (\ref{4.24}), we obtain
\begin{equation}\eqal{
&\|v(t)\|_1^2+\mu\|v\|_{2,2,\Omega_T^t}^2+\nu\|\Delta\varphi\|_{1,2,\Omega_T^t}^2\cr
&\le c|\rot\psi_x|_{3,\infty\Omega_T^t}^2|v(T)|_2^2+c\|v(T)\|_1^2.\cr}
\label{4.25}
\end{equation}
Differentiate $(\ref{1.7})_1$ twice with respect to $x$, multiply by $v_{xx}$ and integrate over $\Omega$. Then we derive
\begin{equation}\eqal{
&{d\over dt}|v_{xx}|_2^2+\mu|\nabla v_{xx}|_2^2+\nu|\divv v_{xx}|_2^2=-\intop_\Omega(\rot\psi)_{xx}\cdot\nabla v\cdot v_{xx}dx\cr
&\quad-2\intop_\Omega(\rot\psi)_x\cdot\nabla v_x\cdot v_{xx}dx-\intop_\Omega\rot\psi\cdot\nabla v_{xx}\cdot v_{xx}dx\cr
&\equiv I_1+I_2+I_3,\cr}
\label{4.26}
\end{equation}
where by the H\"older and Young inequalities we get
$$\eqal{
&|I_1|\le\varepsilon|v_{xx}|_6^2+c/\varepsilon|\nabla v|_2^2|\rot\psi_{xx}|_3^2,\cr
&|I_2|\le\varepsilon|v_{xx}|_6^2+c/\varepsilon|\rot\psi_x|_2^2|v_{xx}|_3^2,\cr
&I_3=0.\cr}
$$
Using the above estimates in (\ref{4.26}), summing up the result integrated with respect to time with (\ref{4.25}) and assuming that $\varepsilon$ is sufficiently small we derive the inequality
\begin{equation}\eqal{
&\|v(t)\|_2^2+\mu\|v\|_{3,2,\Omega_T^t}^2+\nu\|\Delta\varphi\|_{2,2,\Omega_T^t}^2\cr
&\le c|\rot\psi_x|_{3,\infty,\Omega_T^t}^2|v(T)|_2^2+c|\nabla v|_{2,\infty,\Omega_T^t}^2|\rot\psi_{xx}|_{3,2,\Omega_T^t}^2\cr
&\quad+c|\rot\psi_x|_{2,\infty,\Omega_T^t}^2|v_{xx}|_{3,2,\Omega_T^t}^2+ c\|v(T)\|_2^2.\cr}
\label{4.27}
\end{equation}
Using the interpolations
$$\eqal{
&|v(T)|_2^2|\rot\psi_x|_{3,\infty,\Omega_T^t}^2\le c|v(T)|_2^2(\varepsilon^{1/2} |\rot\psi_{xx}|_{2,\infty,\Omega_T^t}^2\cr
&\quad+c\varepsilon^{-3/2}|\rot\psi|_{2,\infty,\Omega_T^t}^2)=\varepsilon_1^{1/2} |\rot\psi_{xx}|_{2,\infty,\Omega_T^t}^2\cr
&\quad+c\varepsilon_1^{-3/2}|v(T)|_2^8|\rot\psi|_{2,\infty,\Omega_T^t}^2,\cr}
$$
$$\eqal{
&|\nabla v|_{2,\infty,\Omega_T^t}^2|\rot\psi_{xx}|_{3,2,\Omega_T^t}^2\le|\nabla v|_{2,\infty,\Omega_T^t}^2(\varepsilon^{1/3}|\rot\psi_{xxx}|_{2,\Omega_T^t}^2\cr
&\quad+c\varepsilon^{-5/3}|\rot\psi|_{2,\Omega_T^t}^2)=\varepsilon_2^{1/3} |\rot\psi_{xxx}|_{2,\Omega_T^t}^2\cr
&\quad+c\varepsilon_2^{-5/3}|\nabla v|_{2,\infty,\Omega_T^t}^{12}|\rot\psi|_{2,\Omega_T^t}^2\cr}
$$
and
$$\eqal{
&|\rot\psi|_{2,\infty,\Omega_T^t}^2\le|v|_{2,\infty,\Omega_T^t}^2\le|v(T)|_2^2,\cr
&|\rot\psi|_{2,\Omega_T^t}^2\le|v|_{2,\Omega_T^t}^2\le|v(T)|_2^2,\cr}
$$
and similarly we have
$$
|\rot\psi_x|_{2,\infty,\Omega_T^t}^2|v_{xx}|_{3,2,\Omega_T^t}^2\le \varepsilon_3^{1/3}|v_{xxx}|_{2,\Omega_T^t}^2
+c\varepsilon_3^{-5/3}|\rot\psi_x|_{2,\infty,\Omega_T^t}^{12} |v|_{2,\Omega_T^t}^2.
$$
Using $\rot\psi=v-\nabla\varphi$ and
$$
|v(t)|_2^2=e^{-\mu(t-T)}|v(T)|_2^2
$$
so
\begin{equation}
\intop_T^t|v(t')|_2^2\le|v(T)|_2^2
\label{4.28}
\end{equation}
we obtain from (\ref{4.27}) the following inequality for sufficiently small $\varepsilon_1-\varepsilon_3$
\begin{equation}\eqal{
&\|v(t)\|_2^2+\mu\|v\|_{3,2,\Omega_T^t}^2+\nu\|\Delta\varphi\|_{2,2,\Omega_T^t}^2
\le c|\nabla v|_{2,\infty,\Omega_T^t}^{12}|v(T)|_2^2\cr
&\quad+c|v(T)|_2^{12}|v(T)|_2^2+ c|\rot\psi_x|_{2,\infty,\Omega_T^t}^{12}|v(T)|_2^2+c\|v(T)\|_2^2.\cr}
\label{4.29}
\end{equation}
Introduce the quantity
$$
X_0(t)=\|v(t)\|_2^2+\mu\|v\|_{3,2,\Omega_T^t}^2+\nu\|\Delta\varphi\|_{2,2,\Omega_T^t}^2.
$$
Then (\ref{4.29}) implies for $t>T$
\begin{equation}
X_0(t)\le cX_0^6(t)|v(T)|_2^2+c\|v(T)\|_2^2.
\label{4.30}
\end{equation}
Then for $T$ sufficiently large and a fixed point argument we obtain the estimate
\begin{equation}
X_0(t)\le c_0\|v(T)\|_2^2,
\label{4.31}
\end{equation}
where $c_0$ is a correspondingly large constant.

\noindent
To derive more delicate estimate for $\varphi$ we consider problem (\ref{1.7}). From (\ref{1.7}) we have
\begin{equation}
\varphi_t-(\mu+\nu)\Delta\varphi=\Delta^{-1}\partial_{x_i}\partial_{x_j}(v_iv_j)- \Delta^{-1}\partial_{x_i}\partial_{x_j}(\varphi_{x_i}v_j).
\label{4.32}
\end{equation}
Multiplying (\ref{4.32}) by $\varphi$ and integrating over $\Omega$ yields
\begin{equation}\eqal{
&{d\over dt}|\varphi|_2^2+(\mu+\nu)|\nabla\varphi|_2^2\le{c\over\nu}|vv|_{6/5}^2+ {c\over\nu}|\nabla\varphi v|_{6/5}^2\cr
&\le{c\over\nu}(|v|_2^2|v|_3^2+|v|_2^2|\nabla\varphi|_3^2).\cr}
\label{4.33}
\end{equation}
Integrating (\ref{4.33}) with respect to time implies
\begin{equation}\eqal{
&|\varphi(t)|_2^2+(\mu+\nu)|\nabla\varphi|_{2,\Omega_T^t}^2\le{c\over\nu} |v(T)|_2^2(|v|_{3,2,\Omega_T^t}^2\cr
&\quad+|\nabla\varphi|_{3,2,\Omega_T^t}^2)+|\varphi(T)|_2^2.\cr}
\label{4.34}
\end{equation}
Multiplying $(\ref{1.7})_1$ by $\nabla\varphi$ and integrating over $\Omega$ gives
$$
{d\over dt}|\nabla\varphi|_2^2+\mu|\nabla^2\varphi|_2^2+\nu|\Delta\varphi|_2^2\le {c\over\nu}|\rot\psi|_3^2|v|_6^2.
$$
Integrating with respect to time implies
\begin{equation}
|\nabla\varphi(t)|_2^2+\mu|\nabla^2\varphi|_{2,\Omega_T^t}^2+\nu |\Delta\varphi|_{2,\Omega_T^t}^2\le{c\over\nu}|\rot\psi|_{3,\infty,\Omega_T^t} |v|_{6,2,\Omega_T^t}^2+|\nabla\varphi(T)|_2^2.
\label{4.35}
\end{equation}
Differentiate $(\ref{1.7})_1$ with respect to $x$, multiply by $\nabla\varphi_x$ and integrate over $\Omega$. Then we obtain
\begin{equation}\eqal{
&{d\over dt}|\nabla\varphi_x|_2^2+\mu|\nabla^2\varphi_x|_2^2+\nu|\Delta\varphi|_2^2= -\intop_\Omega\rot\psi\cdot\nabla v_x\cdot\nabla\varphi_xdx\cr
&\quad-\intop_\Omega\rot\psi_x\cdot\nabla v\cdot\nabla\varphi_xdx=\intop_\Omega\rot\psi\cdot\nabla\nabla\varphi_x\cdot v_xdx\cr
&\quad+\intop_\Omega\rot\psi_x\cdot\nabla\nabla\varphi_x\cdot vdx\equiv I_1+I_2,\cr}
\label{4.36}
\end{equation}
where
$$\eqal{
&|I_1|\le\varepsilon|\nabla^2\varphi_x|_2^2+c/\varepsilon|\rot\psi|_6^2|v_x|_3^2,\cr
&|I_2|\le\varepsilon|\nabla^2\varphi_x|_2^2+c/\varepsilon|\rot\psi_x|_3^2|v|_6^2.\cr}
$$
Using the above estimates in (\ref{4.36}), assuming that $\varepsilon$ is sufficiently small and integrating the result with respect to time we have
\begin{equation}\eqal{
&|\nabla\varphi_x(t)|_2^2+\mu|\nabla^2\varphi_x|_{2,\Omega_T^t}^2+ \nu|\Delta\varphi_x|_{2,\Omega_T^t}^2\cr
&\le{c\over\nu}|\rot\psi|_{6,\infty,\Omega_T^t}^2|v_x|_{3,2,\Omega_T^t}^2+ {c\over\nu}|\rot\psi_x|_{3,2,\Omega_T^t}^2|v|_{6,\infty,\Omega_T^t}^2\cr
&\quad+|\nabla\varphi_x(T)|_2^2.\cr}
\label{4.37}
\end{equation}
Differentiate $(\ref{1.7})_1$ twice with respect to $x$, multiply by $\nabla\varphi_{xx}$ and integrate over $\Omega$. Then we obtain
\begin{equation}\eqal{
&{d\over dt}|\nabla\varphi_{xx}|_2^2+\mu|\nabla^2\varphi_{xx}|_2^2+\nu|\Delta\varphi_{xx}|_2^2\cr
&\le\bigg|-\intop_\Omega(\rot\psi\cdot\nabla v)_{xx}\cdot\nabla\varphi_{xx}dx\bigg|=\bigg|\intop_\Omega(\rot\psi\cdot\nabla v)_x\cdot\nabla\varphi_{xxx}dx\bigg|\cr
&\le\varepsilon|\nabla\varphi_{xxx}|_2^2+{c\over\varepsilon}|\rot\psi|_6^2|\nabla v_x|_3^2+{c\over\varepsilon}|\rot\psi_x|_3^2|\nabla v|_6^2.\cr}
\label{4.38}
\end{equation}
Integrating (\ref{4.38}) with respect to time yields
\begin{equation}\eqal{
&|\nabla\varphi_{xx}(t)|_2^2+\mu|\nabla^2\varphi_{xx}|_{2,\Omega_T^t}^2+\nu |\Delta\varphi_{xx}|_{2,\Omega_T^t}^2\cr
&\le{c\over\nu}|\rot\psi|_{6,\infty,\Omega_T^t}^2|\nabla v_x|_{3,2,\Omega_T^t}^2\cr
&\quad+{c\over\nu}|\rot\psi_x|_{3,\infty,\Omega_T^t}^2|\nabla v|_{6,2,\Omega_T^t}^2+|\nabla\varphi_{xx}(T)|_2^2.\cr}
\label{4.39}
\end{equation}
From (\ref{4.34}), (\ref{4.35}), (\ref{4.37}) and (\ref{4.39}) it follows
\begin{equation}\eqal{
&\|\varphi(t)\|_3^2+\mu\|\nabla\varphi\|_{3,2,\Omega_T^t}^2+ \nu\|\Delta\varphi\|_{2,2,\Omega_T^t}^2\cr
&\le{c\over\nu}|v(T)|_2^2(|v|_{3,2,\Omega_T^t}^2+ |\nabla\varphi|_{3,2,\Omega_T^t}^2)\cr
&\quad+{c\over\nu}(\|\rot\psi\|_{2,\infty,\Omega_T^t}^2+ \|\rot\psi\|_{2,2,\Omega_T^t}^2)\cdot\cr
&\quad\cdot(\|v\|_{3,2,\Omega_T^t}^2+|v|_{6,\infty,\Omega_T^t}^2)+ \|\varphi(T)\|_3^2.\cr
&\le{c\over\nu}\|v(T)\|_2^4+{c\over\nu}\|\varphi(T)\|_3^2,\cr}
\label{4.40}
\end{equation}
where the last inequality follows from (\ref{4.31}). Hence (\ref{4.31}) and (\ref{4.40}) imply (\ref{4.18}). This concludes the proof.
\end{proof}

\section{The energy type estimates for $u$}\label{s5}

\begin{lemma}\label{l5.1}
Let the assumptions of Theorem \ref{t3.6} hold. Let $u(t)\in H^1(\Omega)$, $u_t\in L_2(\Omega)$. Then
\begin{equation}\eqal{
&{d\over dt}|u|_2^2+\mu\|u\|_1^2\le c|u|_2^2(|\nabla v|_6^2+|\nabla\varphi|_6^4)\cr
&\quad+c(\|v\|_1^2|\nabla\varphi|_6^2+\|\nabla\varphi\|_1^2+|\nabla\varphi_t|_2^2).\cr}
\label{5.1}
\end{equation}
\end{lemma}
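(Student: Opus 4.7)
The plan is to test (1.9) against $u$ in $L_2(\Omega)$. The transport term $\intop_\Omega\rot\psi\cdot\nabla u\cdot u\,dx$ vanishes because $\divv\rot\psi=0$ and the domain is periodic; the viscous contributions produce $\mu|\nabla u|_2^2+\nu|\divv u|_2^2$ with the correct sign on the left; and the key identity $\divv u=\divv v=\Delta\varphi$, which follows from $\divv V=0$, will appear repeatedly. Since $\intop_\Omega u\,dx=0$ (inherited from (1.4)--(1.5) and the NSE for $V$), Poincar\'e upgrades $\mu|\nabla u|_2^2$ to a multiple of $\mu\|u\|_1^2$ on the left.

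The convective difference is expanded as
\[
(u-\nabla\varphi)\cdot\nabla(v-u)\cdot u = u\cdot\nabla v\cdot u - u\cdot\nabla u\cdot u - \nabla\varphi\cdot\nabla v\cdot u + \nabla\varphi\cdot\nabla u\cdot u,
\]
and I would bound the four contributions separately. The cubic terms $\intop_\Omega u\cdot\nabla u\cdot u\,dx$ and $\intop_\Omega\nabla\varphi\cdot\nabla u\cdot u\,dx$ reduce, via $\nabla|u|^2=2u_i\nabla u_i$ and integration by parts, to integrals of the form $\intop_\Omega\Delta\varphi\,|u|^2\,dx$; H\"older $|\nabla\varphi|_6|u|_3|\nabla u|_2$ together with Gagliardo--Nirenberg $|u|_3\le c|u|_2^{1/2}\|u\|_1^{1/2}$ and Young yield $\varepsilon\|u\|_1^2+c|\nabla\varphi|_6^4|u|_2^2$. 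The term $\intop_\Omega u\cdot\nabla v\cdot u\,dx$ is bounded by $|u|_3^2|\nabla v|_3$, interpolating $|\nabla v|_3\le c|\nabla v|_2^{1/2}|\nabla v|_6^{1/2}$ and using $|\nabla v|_2\le c|\nabla v|_6$ on the bounded cell, to give $\varepsilon\|u\|_1^2+c|\nabla v|_6^2|u|_2^2$. The cross term $\intop_\Omega\nabla\varphi\cdot\nabla v\cdot u\,dx$ is estimated by $|\nabla\varphi|_3|\nabla v|_2|u|_6$ with $|\nabla\varphi|_3\le c|\nabla\varphi|_2^{1/2}|\nabla\varphi|_6^{1/2}$ and Young, producing $\varepsilon\|u\|_1^2+c\|v\|_1^2|\nabla\varphi|_6^2$.

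The principal obstacle is the pressure contribution, which after integration by parts and $\divv u=\Delta\varphi$ reads $-\intop_\Omega\nabla P\cdot u\,dx=\intop_\Omega P\Delta\varphi\,dx$. My plan is to take the divergence of (1.9) and invert the Laplacian (with the mean of $P$ fixed by the mean-zero convention from (1.10)) to obtain
\[
P = \varphi_t-(\mu+\nu)\Delta\varphi+\Delta^{-1}\divv[\rot\psi\cdot\nabla u+(u-\nabla\varphi)\cdot\nabla V].
\]
Substituting, $\intop_\Omega P\Delta\varphi\,dx$ splits into three pieces. The $\varphi_t$ piece, after one integration by parts, equals $-\intop_\Omega\nabla\varphi_t\cdot\nabla\varphi\,dx$ and Young bounds it by $\tfrac12|\nabla\varphi_t|_2^2+\tfrac12\|\nabla\varphi\|_1^2$, which is exactly where the $|\nabla\varphi_t|_2^2$ term of (5.1) originates. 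The $-(\mu+\nu)|\Delta\varphi|_2^2$ piece, once moved to the right-hand side of the energy identity, cancels the $\nu|\divv u|_2^2=\nu|\Delta\varphi|_2^2$ coming from the left, leaving $\mu|\Delta\varphi|_2^2\le c\|\nabla\varphi\|_1^2$. The Riesz remainder is controlled by the $L^2$-boundedness of $\Delta^{-1}\divv\circ\divv$ (second-order Riesz transforms of a tensor) combined with H\"older, Sobolev and Young; any $\|u\|_1$-factor is split off into $\varepsilon\|u\|_1^2$ and the surviving pieces fit into the combinations $c\|v\|_1^2|\nabla\varphi|_6^2$ and $c\|\nabla\varphi\|_1^2$ already present on the right. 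Collecting all contributions and choosing the various $\varepsilon$'s small enough to absorb every $\varepsilon\|u\|_1^2$ into $\mu\|u\|_1^2$ on the left yields (5.1).
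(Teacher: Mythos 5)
Your argument is sound in outline but takes a genuinely different route from the paper's. The paper never sees the pressure at all: since $\divv u=\divv v=\Delta\varphi$, the field $u-\nabla\varphi=\rot\psi-V$ is divergence free, and the paper multiplies (\ref{1.9}) by $u-\nabla\varphi$ rather than by $u$, so that both $\intop_\Omega\nabla P\cdot(u-\nabla\varphi)dx$ and $\nu\intop_\Omega\nabla\divv u\cdot(u-\nabla\varphi)dx$ vanish identically; the price is a handful of extra $\nabla\varphi$-terms (e.g. $-\intop_\Omega u_t\cdot\nabla\varphi\,dx=-{1\over2}{d\over dt}|\nabla\varphi|_2^2$ and $-\mu\intop_\Omega\nabla u\cdot\nabla^2\varphi\,dx$), all of which land directly in the stated right-hand side. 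You instead test with $u$ and reconstruct $P=\varphi_t-(\mu+\nu)\Delta\varphi+\Delta^{-1}\divv[\cdots]$ by taking the divergence of (\ref{1.9}); your sign bookkeeping is correct (the $-(\mu+\nu)|\Delta\varphi|_2^2$ does absorb the $\nu|\divv u|_2^2$ and leaves $+\mu|\Delta\varphi|_2^2\le c\|\nabla\varphi\|_1^2$ on the right), and your treatment of the transport and convective terms reproduces exactly the coefficients $|u|_2^2(|\nabla v|_6^2+|\nabla\varphi|_6^4)$ and $\|v\|_1^2|\nabla\varphi|_6^2$ of (\ref{5.1}). What your route costs is, first, the Calder\'on--Zygmund input that $\Delta^{-1}\partial_i\partial_j$ is bounded on $L_2$ (harmless on the torus, but absent from the paper's argument), and, second, the Riesz remainder: writing it as $\intop_\Omega F_{ij}\partial_i\partial_j\varphi\,dx$ with $F=u\otimes\rot\psi+V\otimes(u-\nabla\varphi)$, one unavoidably pairs a quadratic expression in $(u,v,\nabla\varphi)$ with $|\nabla^2\varphi|_2$, which produces terms such as $c\|v\|_1^2\|\nabla\varphi\|_1^2$; these are \emph{not} among the terms on the right of (\ref{5.1}) and dominate $\|v\|_1^2|\nabla\varphi|_6^2$ rather than being dominated by it, so your claim that the remainder ``fits into the combinations already present'' is not literally accurate. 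This is a weakening of the stated inequality rather than an error: after integration over $[kT,(k+1)T]$ one still gets $\|v\|_{1,\infty,\Omega_{kT}^t}^2\|\nabla\varphi\|_{1,2,\Omega_{kT}^t}^2\le cA^4/\nu^2$ from the definition of $X$, so Lemmas \ref{l5.3} and \ref{l5.4} go through unchanged. If you want (\ref{5.1}) verbatim, the paper's choice of the divergence-free test function $u-\nabla\varphi$ is the cleaner device.
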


\begin{proof}
Since $u$ is not divergence free we multiply (\ref{1.9}) by $u-\nabla\varphi$. Integrating the result over $\Omega$ we obtain
\begin{equation}\eqal{
&\intop_\Omega u_t\cdot(u-\nabla\varphi)dx+\intop_\Omega\rot\psi\cdot\nabla u\cdot(u-\nabla\varphi)dx\cr
&\quad+\intop_\Omega(u-\nabla\varphi)\cdot\nabla(v-u)\cdot(u-\nabla\varphi)dx+\mu \intop_\Omega\nabla u\cdot\nabla(u-\nabla\varphi)dx=0.\cr}
\label{5.2}
\end{equation}
Continuing, we have
\begin{equation}\eqal{
&{1\over2}{d\over dt}|u|_2^2-\intop_\Omega u_t\cdot\nabla\varphi dx+\intop_\Omega\rot\psi\cdot\nabla u\cdot udx\cr
&\quad-\intop_\Omega\rot\psi\cdot\nabla u\cdot\nabla\varphi dx-\intop_\Omega(u-\nabla\varphi)\cdot\nabla u\cdot udx\cr
&\quad+\intop_\Omega(u-\nabla\varphi)\cdot\nabla u\cdot\nabla\varphi dx+\intop_\Omega(u-\nabla\varphi)\cdot\nabla v\cdot(u-\nabla\varphi)dx\cr
&\quad+\mu|\nabla u|_2^2-\mu\intop_\Omega\nabla u\cdot\nabla^2\varphi dx=0.\cr}
\label{5.3}
\end{equation}
The second term on the l.h.s. of (\ref{5.3}) equals
$$
\intop_\Omega\divv u_t\varphi dx=\intop_\Omega\Delta\varphi_t\varphi dx=-\intop_\Omega\nabla\varphi_t\cdot\nabla\varphi dx=-{1\over2}{d\over dt}|\nabla\varphi|_2^2.
$$
The third term vanishes. We estimate the fourth term by
$$
\varepsilon|\nabla u|_2^2+c/\varepsilon|\rot\psi|_3^2|\nabla\varphi|_6^2.
$$
The fifth term vanishes. We estimate the sixth term by
$$
\varepsilon|\nabla u|_2^2+c/\varepsilon(|u|_3^2+|\nabla\varphi|_3^2)|\nabla\varphi|_6^2.
$$
The seventh term is written in the form
$$\eqal{
&\intop_\Omega u\cdot\nabla vudx-\intop_\Omega\nabla\varphi\cdot\nabla v\cdot udx-\intop_\Omega u\cdot\nabla v\cdot\nabla\varphi dx+\intop_\Omega\nabla\varphi\cdot\nabla v\cdot\nabla\varphi dx\cr
&\equiv J_1+J_2+J_3+J_4,\cr}
$$
where
$$\eqal{
&|J_1|\le\varepsilon|u|_6^2+c/\varepsilon|u|_2^2|\nabla v|_6^2,\cr
&|J_2|\le\varepsilon|u|_6^2+c/\varepsilon|\nabla v|_2^2|\nabla\varphi|_3^2,\cr
&|J_3|\le\varepsilon|u|_6^2+c/\varepsilon|\nabla v|_2^2|\nabla\varphi|_3^2,\cr
&|J_4|\le|\nabla\varphi|_6^2+|\nabla v|_2^2|\nabla\varphi|_3^2.\cr}
$$
Finally, the last term is bounded by
$$
\varepsilon|\nabla u|_2^2+c/\varepsilon|\nabla^2\varphi|_2^2.
$$
Employing the above estimates in (\ref{5.3}) and assuming that $\varepsilon$ is sufficiently small yield
\begin{equation}\eqal{
&{d\over dt}|u|_2^2+\mu\|u\|_1^2\le{d\over dt}|\nabla\varphi|_2^2+c(|\rot\psi|_3^2|\nabla\varphi|_6^2\cr
&\quad+|u|_3^2|\nabla\varphi|_6^2+|\nabla\varphi|_3^2|\nabla\varphi|_6^2+|u|_2^2 |\nabla v|_6^2+|\nabla v|_2^2|\nabla\varphi|_3^2\cr
&\quad+|\nabla^2\varphi|_2^2+|\nabla\varphi|_6^2+|\nabla\varphi|_2^2).\cr}
\label{5.4}
\end{equation}
By some interpolation inequality we have
$$
|u|_3^2|\nabla\varphi|_6^2\le\varepsilon|\nabla u|_2^2+c/\varepsilon|u|_2^2|\nabla\varphi|_6^4.
$$
Then we write (\ref{5.4}) in the form
\begin{equation}\eqal{
&{d\over dt}|u|_2^2+\mu\|u\|_1^2\le c|u|_2^2(|\nabla v|_6^2+|\nabla\varphi|_6^4)\cr
&\quad+c[(|\rot\psi|_3^2+|\nabla\varphi|_3^2)|\nabla\varphi|_6^2+|\nabla v|_2^2|\nabla\varphi|_3^2\cr
&\quad+|\nabla^2\varphi|_2^2+|\nabla\varphi|_6^2]+{d\over dt}|\nabla\varphi|_2^2.\cr}
\label{5.5}
\end{equation}
Using that
$$
|\rot\psi|_3^2+|\nabla\varphi|_3^2\le c(\|\rot\psi\|_1^2+\|\nabla\varphi\|_1^2)\le c\|v\|_1^2
$$
we simplify (\ref{5.5}) to the inequality
\begin{equation}\eqal{
&{d\over dt}|u|_2^2+\mu\|u\|_1^2\le c|u|_2^2(|\nabla v|_6^2+|\nabla\varphi|_6^4)\cr
&\quad+c(\|v\|_1^2|\nabla\varphi|_6^2+|\nabla^2\varphi|_2^2+ |\nabla\varphi|_6^2)+{d\over dt}|\nabla\varphi|_2^2.\cr}
\label{5.6}
\end{equation}
This implies (\ref{5.1}) and concludes the proof.
\end{proof}

\noindent
Next we have

\begin{lemma}\label{l5.2}
Let the assumptions of Theorem \ref{t3.6} hold, let $u(t)\in H^2(\Omega)$, $u_t\in H^1(\Omega)$. Then
\begin{equation}\eqal{
&{d\over dt}|u_x|_2^2+\mu\|\nabla u\|_1^2\le c|u_x|_2^6+c\|u\|_1^2\|v\|_2^2\cr
&\quad+c(\|v\|_2^2\|\nabla\varphi\|_1^2+|\nabla\varphi_{xt}|_2^2+ \|\nabla\varphi\|_2^2+|\nabla\varphi|_6^2|\nabla\varphi_x|_3^2)\cr
&\quad+|\nabla\varphi_x|_2^2|\nabla\varphi_x|_3^2).\cr}
\label{5.7}
\end{equation}
\end{lemma}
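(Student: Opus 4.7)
\medskip
\noindent\textbf{Proof proposal for Lemma \ref{l5.2}.}
The plan is to mimic the proof of Lemma \ref{l5.1} one derivative higher. Set $w=u-\nabla\varphi$; since $\divv u=\divv v=\Delta\varphi$, we have $\divv w=0$, and likewise $\divv w_x=0$. Differentiate $(\ref{1.9})_1$ with respect to a space variable $x$, multiply by $w_x=u_x-\nabla\varphi_x$, and integrate over $\Omega$. The pressure term disappears by periodicity and $\divv w_x=0$, giving the principal identity
$$
\frac12\frac{d}{dt}|u_x|_2^2+\mu|\nabla u_x|_2^2+\nu|\Delta\varphi_x|_2^2=-\intop_\Omega u_{xt}\cdot\nabla\varphi_x\,dx+\mu\intop_\Omega\nabla u_x\cdot\nabla^2\varphi_x\,dx+\sum_k R_k,
$$
where the $R_k$ collect the differentiated convective terms. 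The first right-hand term, after integrating by parts and using $\divv u_{xt}=\Delta\varphi_{xt}$, equals $\tfrac12\frac{d}{dt}|\nabla\varphi_x|_2^2=\intop_\Omega\nabla\varphi_x\cdot\nabla\varphi_{xt}\,dx$, which by Young is bounded by $|\nabla\varphi_x|_2^2+|\nabla\varphi_{xt}|_2^2\le\|\nabla\varphi\|_2^2+|\nabla\varphi_{xt}|_2^2$; this accounts for the $|\nabla\varphi_{xt}|_2^2$ and (part of the) $\|\nabla\varphi\|_2^2$ summands on the right of (\ref{5.7}). The $\mu$-term containing $\nabla^2\varphi_x$ is absorbed after Young into $\varepsilon|\nabla u_x|_2^2+c\|\nabla\varphi\|_2^2$.

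Next I analyze the $R_k$. The differentiated rotational transport gives $\intop_\Omega(\rot\psi_x\cdot\nabla u+\rot\psi\cdot\nabla u_x)\cdot w_x\,dx$; the piece $\intop_\Omega\rot\psi\cdot\nabla u_x\cdot u_x\,dx$ vanishes by $\divv\rot\psi=0$, while $-\intop_\Omega\rot\psi\cdot\nabla u_x\cdot\nabla\varphi_x\,dx$ and $\intop_\Omega\rot\psi_x\cdot\nabla u\cdot w_x\,dx$ are estimated by H\"older, the Sobolev bound $|\rot\psi_x|_3\le c\|v\|_2$ (via $\rot\psi=v-\nabla\varphi$), and Young, giving contributions $\varepsilon\|\nabla u\|_1^2+c\|v\|_2^2(\|u\|_1^2+\|\nabla\varphi\|_1^2)$. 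The differentiated nonlinear convective term $(w\cdot\nabla(v-u))_x$ splits into $w_x\cdot\nabla(v-u)+w\cdot\nabla(v_x-u_x)$; when tested against $w_x$, the piece $\intop_\Omega w\cdot\nabla w_x\cdot w_x\,dx$ vanishes by $\divv w=0$, while all other pieces are controlled by H\"older plus interpolation, producing the terms $c\|v\|_2^2\|u\|_1^2$, $c|\nabla\varphi|_6^2|\nabla\varphi_x|_3^2$ and $c|\nabla\varphi_x|_2^2|\nabla\varphi_x|_3^2$ on the right-hand side.

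The only subtle piece is the genuine cubic in $u_x$ that arises from expanding the nonlinearity, namely $\intop_\Omega u_x\cdot\nabla u_x\cdot u_x\,dx$ (which does not vanish because $u$ is not divergence-free). This I would estimate via the 3D Gagliardo--Nirenberg bound $|u_x|_3^3\le c|u_x|_2^{3/2}|\nabla u_x|_2^{3/2}$ combined with Young's inequality with exponents $(4,4/3)$:
$$
\bigg|\intop_\Omega u_x\cdot\nabla u_x\cdot u_x\,dx\bigg|\le|u_x|_3^2|\nabla u_x|_2\le c|u_x|_2|\nabla u_x|_2^2\le\varepsilon|\nabla u_x|_2^{2}+c|u_x|_2^{6}
$$
after one more application of Young with exponents $(3,3/2)$ on the intermediate bound $c|u_x|_2|\nabla u_x|_2^2=(c|u_x|_2)\cdot|\nabla u_x|_2^2$; this supplies the $c|u_x|_2^6$ summand. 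Collecting everything, choosing $\varepsilon$ small enough to absorb all $\varepsilon|\nabla u_x|_2^2$ and $\varepsilon|\nabla u_x|_2^2+\nu|\Delta\varphi_x|_2^2$ contributions into the dissipative side, and using $\|\nabla u\|_1^2\sim|\nabla u|_2^2+|\nabla u_x|_2^2$ (the lower-order part being furnished by Lemma \ref{l5.1}), yields (\ref{5.7}). The main obstacle is the sharp interpolation in the cubic $u_x$-term, where the power $|u_x|_2^6$ is the best one can achieve without a smallness assumption, and correctly bookkeeping which $\nabla\varphi$ norms are absorbed into $\|\nabla\varphi\|_2^2$ versus $|\nabla\varphi_{xt}|_2^2$.
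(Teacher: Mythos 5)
Your overall route coincides with the paper's: differentiate (\ref{1.9}) in $x$, test with $w_x=u_x-\nabla\varphi_x$, use $\divv w_x=0$ to kill the pressure term and the terms of type $\intop_\Omega\rot\psi\cdot\nabla u_x\cdot u_x\,dx$ and $\intop_\Omega w\cdot\nabla u_x\cdot u_x\,dx$, convert $-\intop_\Omega u_{xt}\cdot\nabla\varphi_x\,dx$ into $-{1\over2}{d\over dt}|\nabla\varphi_x|_2^2$ and hence into $|\nabla\varphi_{xt}|_2^2+\|\nabla\varphi\|_2^2$, and estimate the remaining convective pieces by H\"older, Sobolev and Young. (One small slip: the $-\nu\nabla\divv u$ term contributes $\nu\intop_\Omega\divv u_x\,\divv w_x\,dx=0$ rather than $\nu|\Delta\varphi_x|_2^2$, again because $\divv w_x=0$; this is harmless.)

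The genuine gap is in your treatment of the cubic term. First, the term you isolate, $\intop_\Omega u_x\cdot\nabla u_x\cdot u_x\,dx$, does not actually arise: a single $x$-differentiation of $(u-\nabla\varphi)\cdot\nabla(v-u)$ tested against $w_x$ produces at worst $-\intop_\Omega u_x\cdot\nabla u\cdot u_x\,dx$ (from $w_x\cdot\nabla(v-u)\cdot w_x$), in which each of the three factors carries exactly one derivative, while the dangerous-looking piece $-\intop_\Omega w\cdot\nabla u_x\cdot u_x\,dx$ vanishes outright since $\divv w=0$ (the paper's $L_1=0$). Second, and more seriously, your displayed chain is invalid on both ends: the H\"older step $|\intop_\Omega u_x\cdot\nabla u_x\cdot u_x\,dx|\le|u_x|_3^2|\nabla u_x|_2$ uses exponents with ${1\over3}+{1\over3}+{1\over2}>1$, and the final step $c|u_x|_2|\nabla u_x|_2^2\le\varepsilon|\nabla u_x|_2^2+c|u_x|_2^6$ is not a Young inequality at all — the factor $|\nabla u_x|_2^2$ already carries the full power of the dissipation, so no admissible exponents produce $\varepsilon|\nabla u_x|_2^2$ on the right (one would need $p=1$, $q=\infty$). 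The correct argument, which you cite but do not apply, is exactly the paper's bound for its $J_1$: estimate the true cubic term by $|u_x|_3^3\le c|u_{xx}|_2^{3/2}|u_x|_2^{3/2}$ and apply Young with exponents $(4/3,4)$ to obtain $\varepsilon|u_{xx}|_2^2+c(1/\varepsilon)|u_x|_2^6$, the $\varepsilon$-term being absorbed by $\mu\|\nabla u\|_1^2$. With that correction the rest of your bookkeeping goes through and reproduces (\ref{5.7}).
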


\begin{proof}
Differentiate (\ref{1.9}) with respect to $x$, multiply by $(u-\nabla\varphi)_x$ and integrate the result over $\Omega$. Then we have
\begin{equation}\eqal{
&\intop_\Omega u_{xt}\cdot(u_x-\nabla\varphi_x)dx+\intop_\Omega(\rot\psi_x\cdot \nabla u+\rot\psi\cdot\nabla u_x)(u_x-\nabla\varphi_x)dx\cr
&\quad+\intop_\Omega[(u-\nabla\varphi)\cdot\nabla(v-u)]_{,x}\cdot (u_x-\nabla\varphi_x)dx\cr
&\quad+\mu\intop_\Omega\nabla u_x\cdot(\nabla u_x-\nabla^2\varphi_x)dx=0.\cr}
\label{5.8}
\end{equation}
The first term on the l.h.s. of (\ref{5.8}) equals
$$
{1\over2}{d\over dt}|u_x|_2^2-\intop_\Omega u_{xt}\cdot\nabla\varphi_xdx,
$$
where the second term reads
$$
-\intop_\Omega\nabla\varphi_{xt}\cdot\nabla\varphi_xdx=-{1\over2}{d\over dt}|\nabla\varphi_x|_2^2.
$$
We write the second term on the l.h.s. of (\ref{5.8}) in the form
$$\eqal{
&\intop_\Omega\rot\psi_x\cdot\nabla u\cdot u_xdx-\intop_\Omega\rot\psi_x\cdot\nabla u\cdot\nabla\varphi_xdx+\intop_\Omega\rot\psi\cdot\nabla u_x\cdot u_xdx\cr
&\quad-\intop_\Omega\rot\psi\cdot\nabla u_x\cdot\nabla\varphi_xdx\equiv\sum_{i=1}^4I_i,\cr}
$$
where
$$\eqal{
&|I_1|\le\varepsilon|u_x|_6^2+c/\varepsilon|\rot\psi_x|_3^2|\nabla u|_2^2,\cr
&|I_2|\le\varepsilon|\nabla u|_6^2+c/\varepsilon|\rot\psi_x|_3^2|\nabla\varphi_x|_2^2,\cr
&I_3=0\quad {\rm and}\cr
&|I_4|\le\varepsilon|\nabla u_x|_2^2+c/\varepsilon|\rot\psi|_\infty^2|\nabla\varphi_x|_2^2.\cr}
$$
Next we examine the third term on the l.h.s. of (\ref{5.8}). We write it in the form
$$\eqal{
&\intop_\Omega(u_x-\nabla\varphi_x)\cdot\nabla(v-u)\cdot(u_x-\nabla\varphi_x)dx\cr
&\quad+ \intop_\Omega(u-\nabla\varphi)\cdot\nabla(v_x-u_x)\cdot(u_x-\nabla\varphi_x)dx\equiv J+L.\cr}
$$
First we consider $J$. It can be expressed in the form
$$\eqal{
&-\intop_\Omega(u_x-\nabla\varphi_x)\cdot\nabla u\cdot(u_x-\nabla\varphi_x)dx\cr
&\quad+ \intop_\Omega(u_x-\nabla\varphi_x)\cdot\nabla v\cdot(u_x-\nabla\varphi_x)dx\cr
&=-\intop_\Omega u_x\cdot\nabla u\cdot u_xdx+\intop_\Omega u_x\cdot\nabla u\cdot\nabla\varphi_xdx+\intop_\Omega\nabla\varphi_x\cdot\nabla u\cdot u_xdx\cr
&\quad-\intop_\Omega\nabla\varphi_x\cdot\nabla u\cdot\nabla\varphi_xdx+\intop_\Omega u_x\cdot\nabla v\cdot u_xdx-\intop_\Omega u_x\cdot\nabla v\cdot\nabla\varphi_xdx\cr
&\quad-\intop_\Omega\nabla\varphi_x\cdot\nabla v\cdot u_xdx+\intop_\Omega\nabla\varphi_x\cdot\nabla v\cdot\nabla\varphi_xdx\equiv\sum_{i=1}^8J_i,\cr}
$$
where
$$\eqal{
&|J_1|\le|u_x|_3^3\le c|u_{xx}|_2^{3/2}|u_x|_2^{3/2}\le\varepsilon|u_{xx}|_2^2+ c(1/\varepsilon)|u_x|_2^6,\cr
&|J_2|\le\varepsilon|u_x|_6^2+c/\varepsilon|\nabla u|_2^2|\nabla\varphi_x|_3^2,\cr
&|J_3|\le\varepsilon|u_x|_6^2+c/\varepsilon|\nabla u|_2^2|\nabla\varphi_x|_3^2,\cr
&|J_4|\le\varepsilon|\nabla u|_6^2+c/\varepsilon|\nabla\varphi_x|_2^2 |\nabla\varphi_x|_3^2,\cr
&|J_5|\le\varepsilon|u_x|_6^2+c/\varepsilon|u_x|_2^2|\nabla v|_3^2,\cr
&|J_6|\le\varepsilon|u_x|_6^2+c/\varepsilon|\nabla v|_3^2|\nabla\varphi_x|_2^2,\cr
&|J_7|\le\varepsilon|u_x|_6^2+c/\varepsilon|\nabla v|_3^2|\nabla\varphi_x|_2^2,\cr
&|J_8|\le|\nabla\varphi_x|_3^2+|\nabla v|_6^2|\nabla\varphi_x|_2^2.\cr}
$$
Next we examine $L$. We express it in the form
$$\eqal{
&-\intop_\Omega(u-\nabla\varphi)\cdot\nabla u_x\cdot(u_x-\nabla\varphi_x)dx+\intop_\Omega(u-\nabla\varphi)\cdot\nabla v_x\cdot(u_x-\nabla\varphi_x)dx\cr
&=-\intop_\Omega(u-\nabla\varphi)\cdot\nabla u_x\cdot u_xdx+\intop_\Omega u\cdot\nabla u_x\cdot\nabla\varphi_xdx\cr
&\quad-\intop_\Omega\nabla\varphi\cdot\nabla u_x\cdot\nabla\varphi_xdx+\intop_\Omega u\cdot\nabla v_x\cdot u_xdx\cr
&\quad-\intop_\Omega u\cdot\nabla v_x\cdot\nabla\varphi_xdx-\intop_\Omega\nabla\varphi\cdot\nabla v_x\cdot u_xdx\cr
&\quad+\intop_\Omega\nabla\varphi\cdot\nabla v_x\cdot\nabla\varphi_xdx\equiv\sum_{i=1}^7L_i,\cr}
$$
where $L_1=0$,
$$\eqal{
&|L_2|\le\varepsilon|\nabla u_x|_2^2+c/\varepsilon|u|_6^2|\nabla\varphi_x|_3^2,\cr
&|L_3|\le\varepsilon|\nabla u_x|_2^2+c/\varepsilon|\nabla\varphi|_6^2|\nabla\varphi_x|_3^2,\cr
&|L_4|\le\varepsilon|u_x|_6^2+c/\varepsilon|u|_3^2|\nabla v_x|_2^2,\cr
&|L_5|\le|\nabla\varphi_x|_3^2+|u|_6^2|\nabla v_x|_2^2,\cr
&|L_6|\le\varepsilon|u_x|_6^2+c/\varepsilon|\nabla\varphi|_3^2|\nabla v_x|_2^2,\cr
&|L_7|\le|\nabla\varphi_x|_3^2+|\nabla\varphi|_6^2|\nabla v_x|_2^2.\cr}
$$
Finally, the last term on the l.h.s. of (\ref{5.8}) equals
$$
\mu|\nabla u_x|_2^2-\mu\intop_\Omega\nabla u_x\cdot\nabla^2\varphi_xdx,
$$
where the second integral is bounded by
$$
\varepsilon|\nabla u_x|_2^2+c/\varepsilon|\nabla^2\varphi_x|_2^2.
$$
Using the above estimates in (\ref{5.8}) yields
\begin{equation}\eqal{
&{d\over dt}|u_x|_2^2+\mu\|\nabla u\|_1^2\le c|u_x|_2^6+c\|u\|_1^2(|\rot\psi_x|_3^2+|\nabla\varphi_x|_3^2\cr
&\quad+|\nabla v|_3^2+|\nabla v_x|_2^2)+c(|\rot\psi_x|_3^2|\nabla\varphi_x|_2^2+ |\rot\psi|_\infty^2|\nabla\varphi_x|_2^2\cr
&\quad+|\nabla\varphi_x|_2^2|\nabla\varphi_x|_3^2+|\nabla^2\varphi_x|_2^2+ |\nabla\varphi_x|_3^2+ |\nabla\varphi|_6^2|\nabla\varphi_x|_3^2\cr
&\quad+|\nabla v|_6^2|\nabla\varphi_x|_2^2+|\nabla\varphi|_6^2|\nabla v_x|_2^2+|\nabla\varphi_{xt}|_2^2).\cr}
\label{5.9}
\end{equation}
Simplifying (\ref{5.9}) we obtain
\begin{equation}\eqal{
&{d\over dt}|u_x|_2^2+\mu\|\nabla u\|_1^2\le c|u_x|_2^6+c\|u\|_1^2\|v\|_2^2\cr
&\quad+c(\|v\|_2^2\|\nabla\varphi\|_1^2+|\nabla\varphi_{xt}|_2^2+ \|\nabla\varphi\|_2^2+|\nabla\varphi|_6^2|\nabla\varphi_x|_3^2\cr
&\quad+|\nabla\varphi_x|_2^2|\nabla\varphi_x|_3^2).\cr}
\label{5.10}
\end{equation}
This implies (\ref{5.7}) and concludes the proof.
\end{proof}

\begin{lemma}\label{l5.3}
Assume that Lemma \ref{l4.1} and Lemma \ref{l4.2} hold for any $t\in\R_+$. Assume that $cA^2\le\mu T/2$, $u(0)\in L_2(\Omega)$. Then
\begin{equation}\eqal{
|u(kT)|_2^2&\le c\exp(cA^2)(A^2+1){A^2\over\nu^2}/(1-\exp(-\mu T/2))\cr
&\quad+\exp(-\mu kT/2)|u(0)|_2^2,\quad k\in\N_0\cr}
\label{5.11}
\end{equation}
and for $t\in[kT,(k+1)T]$,
\begin{equation}\eqal{
|u(t)|_2^2&\le c\exp(cA^2)(A^2+1){A^2\over\nu^2}+\exp(-\mu(t-kT)+cA^2)\cdot\cr
&\quad\cdot\bigg[c\exp(cA^2)(A^2+1){A^2\over\nu^2}/(1-\exp(-\mu T/2))\cr
&\quad+\exp(-\mu kT/2)|u(0)|_2^2\bigg].\cr}
\label{5.12}
\end{equation}
\end{lemma}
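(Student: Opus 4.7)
The plan is to iterate the differential inequality from Lemma \ref{l5.1} on the time slabs $[kT,(k+1)T]$, using the uniform bounds $X(t)\le A$ from Theorem \ref{t3.6} (extended by Remark \ref{r3.7} and Lemma \ref{l4.1} to every such slab) together with the smallness estimates for $\nabla\varphi$ coming from Lemma \ref{l4.2}. First I would note that since $\int_\Omega V_0\,dx=\int_\Omega v_0\,dx=0$, periodicity and the equations give $\int_\Omega u(t)\,dx=0$, so by the Poincar\'e inequality $|u|_2^2\le c|\nabla u|_2^2$ and hence $\mu\|u\|_1^2\ge c\mu|u|_2^2$. Substituting this into (\ref{5.1}) yields a scalar Gronwall inequality
\[
\tfrac{d}{dt}|u|_2^2+c\mu|u|_2^2\le g(t)|u|_2^2+h(t),
\]
with $g(t)=c(|\nabla v|_6^2+|\nabla\varphi|_6^4)$ and $h(t)=c(\|v\|_1^2|\nabla\varphi|_6^2+\|\nabla\varphi\|_1^2+|\nabla\varphi_t|_2^2)$.

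Next I would control $\int_{kT}^{(k+1)T}g$ and $\int_{kT}^{(k+1)T}h$. The point is that by Lemma \ref{l4.1} the data at $t=kT$ satisfy $Y(kT)\le Y(0)$, so Theorem \ref{t3.6} (applied on $[kT,(k+1)T]$) gives $X_{kT}(t)\le A$ for $t\in[kT,(k+1)T]$. From the definition of $X$ one reads off
\[
\int_{kT}^{(k+1)T}|\nabla v|_6^2\,dt'\le c\|v\|_{2,2,\Omega_{kT}^{(k+1)T}}^2\le cA^2/\mu,
\]
and using the pointwise bound $\sqrt{\nu}\,\|\nabla\varphi(t)\|_1\le A$ one has $|\nabla\varphi|_6(t)\le cA/\sqrt{\nu}$, which together with $cA^2\le\mu T/2$ gives $\int_{kT}^{(k+1)T}|\nabla\varphi|_6^4\,dt'\le cTA^4/\nu^2\le cA^2$. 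Hence $\int_{kT}^{(k+1)T}g\,dt'\le cA^2$. For $h$, the bound $\sqrt{\nu}\,|\nabla\varphi|_{2,1,\infty}\le A$ together with $\mu\nu|\nabla\varphi|_{3,1,2}^2\le A^2$ shows that $\int_{kT}^{(k+1)T}h\,dt'\le cA^2(A^2+1)/\nu^2$.

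The main step is then the integrating factor argument. Setting $F(t)=\exp\!\bigl(c\mu(t-kT)-\int_{kT}^t g(s)\,ds\bigr)$, inequality $(*)$ rewrites as $\tfrac{d}{dt}(F(t)|u|_2^2)\le F(t)h(t)$. Integrating over $[kT,t]$ and using the bound $\int g\le cA^2$, so that $F(t)\ge\exp(c\mu(t-kT)-cA^2)$, produces
\[
|u(t)|_2^2\le e^{-c\mu(t-kT)+cA^2}|u(kT)|_2^2+e^{-c\mu(t-kT)+cA^2}\!\!\int_{kT}^{t}\!\!h(s)\,ds.
\]
Evaluating at $t=(k+1)T$ and using $cA^2\le\mu T/2$ yields the contraction relation
\[
|u((k+1)T)|_2^2\le e^{-\mu T/2}|u(kT)|_2^2+c\,e^{cA^2}(A^2+1)\tfrac{A^2}{\nu^2}.
\]
Iterating this geometric recursion in $k$ gives (\ref{5.11}), and plugging the resulting bound on $|u(kT)|_2^2$ back into the displayed estimate on $[kT,t]$ yields (\ref{5.12}).

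The main obstacle I anticipate is not the Gronwall step itself but the careful bookkeeping that ensures the integrals $\int_{kT}^{(k+1)T}g$ and $\int_{kT}^{(k+1)T}h$ are indeed dominated by $cA^2$ and $cA^2(A^2+1)/\nu^2$ respectively. This requires using every component of $X$: the uniform-in-time $H^1$-bound on $\nabla\varphi$ (for the pointwise $|\nabla\varphi|_6$), the $L^2_tH^3$ bound with the extra factor $\nu$ (for $\int|\nabla\varphi_t|_2^2$ and to absorb the $1/\nu^2$ factor), and the invariance $Y(kT)\le Y(0)$ from Lemma \ref{l4.1} (so Theorem \ref{t3.6} may be reapplied on the next slab with the same bound $A$). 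Once these bounds are in place the estimate follows, with the hypothesis $cA^2\le\mu T/2$ playing the essential role of guaranteeing a per-step decay factor strictly less than one.
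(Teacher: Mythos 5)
Your proposal is correct and follows essentially the same route as the paper: Gronwall on (\ref{5.1}) over each slab $[kT,(k+1)T]$, with $\int g\le cA^2$ and $\int h\le c(A^2+1)A^2/\nu^2$ supplied by Theorem \ref{t3.6}, Lemma \ref{l4.1} and Lemma \ref{l4.2}, followed by the geometric iteration under $cA^2\le\mu T/2$. The only superfluous step is the Poincar\'e inequality: with the paper's convention $\|u\|_1^2=|u|_2^2+|\nabla u|_2^2$ one has $\mu\|u\|_1^2\ge\mu|u|_2^2$ directly, which yields the exact decay rate $\mu$ appearing in (\ref{5.11})--(\ref{5.12}) rather than $c\mu$.
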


\begin{proof}
In view of Lemma \ref{l4.1} it follows that Lemma \ref{l4.2} holds for interval $[kT,(k+1)T]$, $k\in\N_0$. Considering (\ref{5.1}) in the interval $[kT,(k+1)T]$ we have
\begin{equation}\eqal{
|u(t)|_2^2&\le\exp(-\mu t+cA^2)\intop_{kT}^t(\|v\|_1^2|\nabla\varphi|_6^2+ \|\nabla\varphi\|_1^2\cr
&\quad+|\nabla\varphi_t|_2^2)\exp(\mu t')dt'+\exp[-\mu(t-kT)+cA^2]|u(kT)|_2^2.\cr}
\label{5.13}
\end{equation}
Continuing, we have
\begin{equation}\eqal{
|u(t)|_2^2&\le c\exp(cA^2)(\|v\|_{1,\infty,\Omega_{kT}^t}^2 |\nabla\varphi|_{6,2,\Omega_{kT}^t}^2+\|\nabla\varphi\|_{1,2,\Omega_k^t}^2\cr
&\quad+|\nabla\varphi_t|_{2,\Omega_{kT}^t}^2)+\exp(-\mu(t-kT)+cA^2)|u(kT)|_2^2,\cr}
\label{5.14}
\end{equation}
where $\Omega_{kT}^t=\Omega\times(kT,t)$. Setting $t=(k+1)T$ and using the properties of solutions described by Lemma \ref{l4.2} we have
\begin{equation}\eqal{
|u((k+1)T)|_2^2&\le c\exp(cA^2)(A^2+1){A^2\over\nu^2}\cr
&\quad+\exp(-\mu T+cA^2)|u(kT)|_2^2.\cr}
\label{5.15}
\end{equation}
Using $-{\mu\over2}T+cA^2\le 0$ and the iteration we obtain from (\ref{5.15}) the inequality (\ref{5.11}). From (\ref{5.14}) we have
\begin{equation}
|u(t)|_2^2\le c\exp(cA^2)(A^2+1){A^2\over\nu^2}+\exp[-\mu(t-kT)+cA^2]|u(kT)|_2^2.
\label{5.16}
\end{equation}
From (\ref{5.16}) and (\ref{5.11}) we obtain (\ref{5.12}). This concludes the proof.
\end{proof}

\begin{lemma}\label{l5.4}
Let the assumptions of Theorem \ref{t3.6}, Lemmas \ref{l4.1}, \ref{l4.2} hold. Let $\|u(0)\|_1^2\le\gamma$, where $\gamma\le\gamma_*$ and $\gamma_*$ is so small that $\mu-c\exp(2cA^2)\gamma_*^2\ge\mu/2$.\\
Then for $\nu$ and $T$ sufficiently large we have
\begin{equation}
\|u(kT)\|_1^2\le\gamma,\quad k\in\N_0,
\label{5.17}
\end{equation}
and
\begin{equation}
\|u(t)\|_1^2\le\bigg[c(A^2+1){A^2\over\nu^2}+\gamma\bigg]\exp(cA^2),\quad t\in[kT,(k+1)T].
\label{5.18}
\end{equation}
\end{lemma}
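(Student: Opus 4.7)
The plan is to combine the $L_2$-estimate (\ref{5.1}) and the $H^1$-gradient estimate (\ref{5.7}) and then propagate the smallness of $\|u\|_1$ from one time slab $[kT,(k+1)T]$ to the next by induction on $k\in\N_0$. Adding (\ref{5.1}) and (\ref{5.7}) yields a differential inequality of the form
\begin{equation*}
\frac{d}{dt}\|u\|_1^2+\mu(\|u\|_1^2+\|\nabla u\|_1^2)\le c|u_x|_2^6+ca(t)\|u\|_1^2+cb(t),
\end{equation*}
where $a(t)$ collects $|\nabla v|_6^2+|\nabla\varphi|_6^4+\|v\|_2^2$, and $b(t)$ collects the purely $(v,\varphi)$-forcing terms (i.e.\ $\|v\|_1^2|\nabla\varphi|_6^2+\|\nabla\varphi\|_2^2+|\nabla\varphi_t|_2^2+|\nabla\varphi_{xt}|_2^2+\|v\|_2^2\|\nabla\varphi\|_1^2+|\nabla\varphi|_6^2|\nabla\varphi_x|_3^2+|\nabla\varphi_x|_2^2|\nabla\varphi_x|_3^2$).

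Next I would set up the induction: assume $\|u(kT)\|_1^2\le\gamma$. As long as $\|u(t)\|_1^2\le 2\gamma$ (say) on the slab, the cubic term satisfies $c|u_x|_2^6\le c\|u\|_1^4\,\|u\|_1^2\le 4c\gamma^2\|u\|_1^2$. By Theorem \ref{t3.6} and Lemma \ref{l4.2}, over $[kT,(k+1)T]$ we have $\int a(t)\,dt\le cA^2$, so the exponential factor produced by Gronwall is bounded by $\exp(cA^2+4c\gamma^2 T)$; the choice of $\gamma_*$ together with the absorption $\mu-c\exp(2cA^2)\gamma_*^2\ge\mu/2$ is tailored precisely to let $\mu/2$ survive as an effective decay rate. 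For the forcing, Lemma \ref{l4.2} gives $\|v\|_{2,\infty,\Omega_{kT}^t}+\|\nabla\varphi\|_{3,\infty,\Omega_{kT}^t}+\|\nabla\varphi\|_{3,2,\Omega_{kT}^t}\le cB\le c(e^{-\mu kT/2}\|v(0)\|_2+A/\sqrt{\nu})$, which for $T$ large and $\nu$ large yields $\int_{kT}^{(k+1)T}b(t)\,dt\le c(A^2+1)A^2/\nu^2$.

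Combining these inputs in the integrated form of the differential inequality gives
\begin{equation*}
\|u(t)\|_1^2\le e^{-\mu(t-kT)/2+cA^2}\|u(kT)\|_1^2+ce^{cA^2}(A^2+1)\frac{A^2}{\nu^2},\qquad t\in[kT,(k+1)T],
\end{equation*}
which is exactly (\ref{5.18}). Evaluating at $t=(k+1)T$ and using $cA^2\le \mu T/2$ (so $-\mu T/2+cA^2\le 0$ and in fact $e^{-\mu T/2+cA^2}\le e^{-\mu T/4}\le\tfrac12$ for $T$ sufficiently large) gives the recursion
\begin{equation*}
\|u((k+1)T)\|_1^2\le \tfrac12\|u(kT)\|_1^2+ce^{cA^2}(A^2+1)\tfrac{A^2}{\nu^2}.
\end{equation*}
For $\nu$ large enough the constant forcing is below $\gamma/2$, so the recursion preserves the bound $\|u(kT)\|_1^2\le\gamma$, establishing (\ref{5.17}) by induction; a posteriori the interim bound $\|u(t)\|_1^2\le 2\gamma$ used to tame the cubic term is vindicated by (\ref{5.18}).

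The main obstacle I expect is the circularity between the cubic term $c|u_x|_2^6$ and the smallness hypothesis: the absorption $c\gamma^2\le\mu/2$ is only legitimate as long as $\|u\|_1$ remains small on the whole slab, which is exactly the conclusion being proved. The standard way around this is a continuity/bootstrap argument: let $t^*$ be the first time in $[kT,(k+1)T]$ at which $\|u(t)\|_1^2=2\gamma$; on $[kT,t^*]$ the above estimates apply and force $\|u(t^*)\|_1^2<2\gamma$ provided $\nu$ and $T$ were chosen large enough (and $\gamma_*$ small enough, as in the hypothesis), contradicting the definition of $t^*$. Making this bootstrap rigorous, and in particular fixing the order of largeness of $T$ and $\nu$ relative to $A$ and $\gamma_*$, is the only delicate point; the rest of the argument is a routine combination of Lemmas \ref{l5.1}, \ref{l5.2}, \ref{l5.3} with the bounds from Theorem \ref{t3.6} and Lemma \ref{l4.2}.
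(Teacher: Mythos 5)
Your proposal is correct and follows essentially the same route as the paper: add (\ref{5.1}) and (\ref{5.7}), absorb the cubic term $c\|u\|_1^6$ into the dissipation via the smallness threshold on $\gamma_*$ using a stopping-time/continuity argument on each slab $[kT,(k+1)T]$ (the paper's $t_*$ in the proof of Lemma \ref{l5.4} is exactly your bootstrap time), bound the forcing by $c(A^2+1)A^2/\nu^2$ via Theorem \ref{t3.6} and Lemma \ref{l4.2}, and close the induction over $k$ with the recursion $\|u((k+1)T)\|_1^2\le e^{-\mu T/2+cA^2}\|u(kT)\|_1^2+ce^{cA^2}(A^2+1)A^2/\nu^2$. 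The only cosmetic difference is that the paper runs Gronwall on the weighted quantity $X_0^2(t)=\exp(-c\int_{kT}^t\|v\|_2^2)\|u(t)\|_1^2$ with threshold $\gamma$ rather than on $\|u\|_1^2$ itself with threshold $2\gamma$, which changes nothing of substance.
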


\begin{proof}
From (\ref{5.1}) and (\ref{5.7}) we have
\begin{equation}\eqal{
&{d\over dt}\|u\|_1^2+\mu\|\nabla u\|_1^2\le c\|u\|_1^6+c\|u\|_1^2\|v\|_2^2+ (\|v\|_2^2\|\nabla\varphi\|_1^2\cr
&\quad+|\nabla\varphi_{xt}|_2^2+\|\nabla\varphi\|_2^2+|\nabla\varphi|_6^2 |\nabla\varphi_x|_3^2+|\nabla\varphi_x|_2^2|\nabla\varphi_x|_3^2).\cr}
\label{5.19}
\end{equation}
We write (\ref{5.19}) in the form
\begin{equation}\eqal{
{d\over dt}\|u\|_1^2&\le-(\mu-c\|u\|_1^4)\|u\|_1^2+c\|u\|_1^2\|v\|_2^2\cr
&\quad+c(\|v\|_2^2\|\nabla\varphi\|_1^2+|\nabla\varphi_{xt}|_2^2+ \|\nabla\varphi\|_2^2+|\nabla\varphi|_6^2|\nabla\varphi_x|_3^2\cr
&\quad+|\nabla\varphi_x|_2^2|\nabla\varphi_x|_3^2).\cr}
\label{5.20}
\end{equation}
We consider (\ref{5.20}) in the interval $[kT,(k+1)T]$, $k\in\N_0$. We know that Lemma \ref{l4.2} holds in this interval. Assume that
\begin{equation}
\|u(kT)\|_1\le\gamma,
\label{5.21}
\end{equation}
where $\gamma$ will be chosen sufficiently small. Introduce the quantity
\begin{equation}
X_0^2(t)=\exp\bigg(-c\intop_{kT}^t\|v(t')\|_2^2dt'\bigg)\|u(t)\|_1^2.
\label{5.22}
\end{equation}
Assume that estimate (\ref{3.53}) holds for interval $[kT,(k+1)T]$. Then
\begin{equation}
\intop_{kT}^t\|v(t')\|_2^2dt'\le A^2,\quad t\in[kT,(k+1)T].
\label{5.23}
\end{equation}
Therefore
\begin{equation}
X_0^2(t)\ge\exp(-cA^2)\|u(t)\|_1^2.
\label{5.24}
\end{equation}
Introduce the quantity
\begin{equation}\eqal{
G^2(t)&=c(\|v\|_2^2\|\nabla\varphi\|_1^2+|\nabla\varphi_{xt}|_2^2+ \|\nabla\varphi\|_2^2\cr
&\quad+|\nabla\varphi|_6^2|\nabla\varphi_x|_3^2+|\nabla\varphi_x|_2^2 |\nabla\varphi_x|_3^2).\cr}
\label{5.25}
\end{equation}
Let
\begin{equation}
K^2(t)=\exp\bigg(-c\intop_{kT}^t\|v(t')\|_2^2dt'\bigg)G^2(t).
\label{5.26}
\end{equation}
Then (\ref{5.20}) takes the form
\begin{equation}
{d\over dt}X_0^2\le-\bigg[\mu-c\exp\bigg(2c\intop_{kT}^t\|v(t')\|_{2^2}dt'\bigg)X_0^4\bigg] X_0^2+K^2.
\label{5.27}
\end{equation}
In view of (\ref{3.53}) we have
\begin{equation}
K^2(t)\le c(A^2+1){A^2\over\nu^2}
\label{5.28}
\end{equation}
and (\ref{5.22}) implies
\begin{equation}
X_0^2(kT)=\|u(kT)\|_1^2\le\gamma.
\label{5.29}
\end{equation}
Suppose that
$$t_*=\inf\{t\in[kT,(k+1)T]\colon X_0^2(t)>\gamma\}.
$$
Let $\gamma\in(0,\gamma_*]$, where $\gamma_*$ is so small that
\begin{equation}
\mu-c\exp(2cA^2)\gamma_*^4\ge\mu/2.
\label{5.30}
\end{equation}
Hence, for $t\le t_*$ we derive from (\ref{5.27}) the inequality
\begin{equation}
{d\over dt}X_0^2\le-{\mu\over 2}X_0^2+K^2.
\label{5.31}
\end{equation}
Assume that $\nu$ is so large that
$$
K^2(t)\le c(A^2+1){A^2\over\nu^2}\le{\mu\over 4}\gamma\quad {\rm for}\quad t\in[kT,(k+1)T].
$$
Then
$$
{d\over dt}X_0^2|_{t=t_*}\le-\bigg({\mu\over2}-{\mu\over4}\bigg)\gamma<0
$$
so $t_*$ does not exist in $[kT,(k+1)T]$. Hence
\begin{equation}
\|u(t)\|_1^2\le\gamma\exp(cA^2),\quad t\in[kT,(k+1)T].
\label{5.32}
\end{equation}
In view of (\ref{5.30}), (\ref{5.32}) and for $\gamma\le\gamma_*$ we obtain from (\ref{5.20}) the inequality
\begin{equation}\eqal{
{d\over dt}\|u\|_1^2&\le-{\mu\over2}\|u\|_1^2+c\|u\|_1^2\|v\|_2^2+c(\|v\|_2^2 \|\nabla\varphi\|_1^2\cr
&\quad+|\nabla\varphi_{xt}|_2^2+\|\nabla\varphi\|_2^2+|\nabla\varphi|_6^2 |\nabla\varphi_x|_3^2+|\nabla\varphi_x|_2^2|\nabla\varphi_x|_3^2).\cr}
\label{5.33}
\end{equation}
From (\ref{5.33}) we have
\begin{equation}\eqal{
&{d\over dt}\bigg[\|u\|_1^2\exp\bigg({\mu\over2}t-c\intop_{kT}^t\|v(t')\|_2^2dt'\bigg)\bigg]\le c\big(\|v\|_2^2\|\nabla\varphi\|_1^2+|\nabla\varphi_{xt}|_2^2\cr
&\quad+\|\nabla\varphi\|_2^2+ |\nabla\varphi|_6^2|\nabla\varphi_x|_3^2\cr
&\quad+|\nabla\varphi_x|_2^2|\nabla\varphi_x|_3^2\big)\exp \bigg({\mu\over2}t-c\intop_{kT}^t ||v(t'\|_2^2dt'\bigg)\bigg).\cr}
\label{5.34}
\end{equation}
Integrating (\ref{5.34}) with respect to time yields
\begin{equation}\eqal{
\|u(t)\|_1^2&\le c\exp\bigg(-{\mu\over2}t+c\intop_{kT}^t\|v(t')\|_2^2dt'\bigg) \intop_{kT}^t(\|v\|_2^2\|\nabla\varphi\|_1^2\cr
&\quad+|\nabla\varphi_{xt}|_2^2+\|\nabla\varphi\|_2^2+|\nabla\varphi|_6^2 |\nabla\varphi_x|_3^2\cr
&\quad+|\nabla\varphi_x|_2^2|\nabla\varphi_x|_3^2)\exp\bigg({\mu\over2}t'\bigg)dt'\cr
&\quad+\exp\bigg(-{\mu\over2}(t-kT)+c\intop_{kT}^t\|v(t')\|_2^2dt'\bigg) \|u(kT)\|_1^2.\cr}
\label{5.35}
\end{equation}
Setting $t=(k+1)T$ we derive
\begin{equation}\eqal{
\|u((k+1)T)\|_1^2&\le c\exp(cA^2)(A^2+1){A^2\over\nu^2}\cr
&\quad+\exp\bigg(-{\mu\over2}T+cA^2\bigg)\|u(kT)\|_1^2.\cr}
\label{5.36}
\end{equation}
Assuming that $-{\mu\over4}T+cA^2\le 0$, $\|u(kT)\|_1^2\le\gamma$ we obtain
$$
\|u((k+1)T)\|_1^2\le c\exp(cA^2)(A^2+1){A^2\over\nu^2}+\exp\bigg(-{\mu\over4}T\bigg)\gamma\le\gamma
$$
which holds for sufficiently large $\nu$ and $T$. Hence
\begin{equation}
\|u(kT)\|_1^2\le\gamma\quad {\rm for\ any}\ \ k\in\N_0.
\label{5.37}
\end{equation}
Using (\ref{5.37}) in (\ref{5.35}) yields
\begin{equation}
\|u(t)\|_1^2\le\bigg[c(A^2+1){A^2\over\nu^2}+\gamma\bigg]\exp(cA^2),
\label{5.38}
\end{equation}
where $t\in[kT,(k+1)T]$. Estimates (\ref{5.37}) and (\ref{5.38}) imply (\ref{5.17}) and (\ref{5.18}). This concludes the proof.
\end{proof}

\section{Estimate for solutions to the Navier-\\ -Stokes equations}\label{s6}

\begin{theorem}\label{t6.1}
Let the assumptions of Theorem \ref{t3.6}, Lemmas \ref{l4.1}, \ref{l4.2}, \ref{l5.3}, \ref{l5.4} hold. Then
\begin{equation}
\|V(t)\|_1^2\le A^2+\bigg[c(A^2+1){A^2\over\nu^2}+\gamma\bigg]\exp(cA^2).
\label{6.1}
\end{equation}
\end{theorem}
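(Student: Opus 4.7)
The proof will be short because it is essentially a bookkeeping step: all the hard analytic work has already been carried out in Theorems~\ref{t1.2}/\ref{t3.6}, Lemmas~\ref{l4.1}/\ref{l4.2}, and the stability Lemma~\ref{l5.4}. The plan is to exploit the decomposition $V=v-u$ coming from (\ref{1.8}) and control each summand separately in $H^1(\Omega)$.

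First I would write $V(t)=v(t)-u(t)$ and apply the triangle inequality to obtain $\|V(t)\|_1^2\le 2\|v(t)\|_1^2+2\|u(t)\|_1^2$ (the factor $2$ being harmless up to a redefinition of $c$). For the $v$-part I would use the representation (\ref{1.6}) together with the elliptic regularity of (\ref{1.10})--(\ref{1.11}), which gives $\|v\|_1\le c(\|\nabla\varphi\|_1+\|\rot\psi\|_1)$. Then, for $t\le T$, the quantity $X(t)$ from Notation~\ref{n1.7} dominates both $\sqrt{\nu}|\nabla\varphi(t)|_{2,1}$ and $|\rot\psi(t)|_{2,1}$, so Theorem~\ref{t3.6} yields $\|v(t)\|_1\le cA$. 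For $t>T$ I would combine Lemma~\ref{l4.1} (which propagates $Y(kT)\le Y(0)\le X(0)\le A$ by iteration over intervals $[kT,(k+1)T]$) with Lemma~\ref{l4.2} (which gives the $H^2$-bound (\ref{4.18})), thereby extending $\|v(t)\|_1\le cA$ to all $t\in\R_+$.

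For the $u$-part I would invoke Lemma~\ref{l5.4} directly: inequality (\ref{5.18}) asserts
\[
\|u(t)\|_1^2\le\Bigl[c(A^2+1){A^2\over\nu^2}+\gamma\Bigr]\exp(cA^2)
\]
for every $t\in[kT,(k+1)T]$ and every $k\in\N_0$, hence uniformly in $t\in\R_+$. Adding this to the bound on $\|v\|_1^2$ and absorbing the factor $2$ into $c$ yields (\ref{6.1}).

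The only subtle point — and really the only place where one has to be careful — is the uniform-in-time bound on $\|v(t)\|_1$. Theorem~\ref{t3.6} only provides the estimate $X(t)\le A$ on the long but finite interval $[0,T]$ with $T\le\nu^\beta$, so one must argue that the two-step-in-time mechanism of Lemma~\ref{l4.1} (smallness of $Y(T)$ relative to $Y(0)$) combined with Lemma~\ref{l4.2} allows the same bound $\|v(t)\|_1\le cA$ to be propagated across each successor interval $[kT,(k+1)T]$. Everything else is a direct assembly of the estimates already proved, and the rescaling of constants is absorbed into $c$ and $A$.
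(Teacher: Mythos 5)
Your proposal is correct and follows essentially the same route as the paper: decompose $V=v-u$, bound $\|v(t)\|_1$ by $A$ via Theorem \ref{t3.6} (extended to all of $\R_+$ through Lemmas \ref{l4.1} and \ref{l4.2}), and bound $\|u(t)\|_1^2$ by (\ref{5.18}). If anything, your version is slightly more careful than the paper's, which writes $\|V\|_1^2\le\|v\|_1^2+\|u\|_1^2$ without the factor of $2$ that the triangle inequality actually requires, and which does not spell out the propagation of the $v$-bound past $t=T$.
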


\begin{proof}
Recall that
$$
V=v-u
$$
Hence using estimates (\ref{3.40}), (\ref{5.18}) yields
$$
\|V(t)\|_1^2\le\|v\|_1^2+\|u\|_1^2\le A^2+\bigg[c(A^2+1){A^2\over\nu^2}+\gamma\bigg]\exp(cA^2).
$$
This concludes the proof.
\end{proof}

\bibliographystyle{amsplain}
\begin{thebibliography}{99}

\bibitem[Z1]{Z1} Zaj\c{a}czkowski, W.M.: Global regular periodic solutions for equations of weakly compressible barotropic fluid motions. Case C. Preprint 756, Inst. Math. Polish Acad. Sc.
\bibitem[Z2]{Z2} Zaj\c{a}czkowski, W.M.: On regular periodic solutions to the Navier-Stokes equations. Case C. Preprint 758, Inst. Math. Polish Acad. Sc. 

\bibitem[Z3]{Z3} Zaj\c{a}czkowski, W.M.: Global regular periodic solutions for equations of weakly compressible barotropic fluid motions. Case B. Preprint 755, Inst. Math. Polish Acad. Sc.
\bibitem[Z4]{Z4} Zaj\c{a}czkowski, W.M.: On regular periodic solutions to the Navier-Stokes equations. Case B. Preprint 757, Inst. Math. Polish Acad. Sc.
\bibitem[Z5]{Z5} Zaj\c{a}czkowski, W.M.: Global regular periodic solutions for equations of weakly compressible barotropic fluid motions. Case A.
\bibitem[Z6]{Z6} Zaj\c{a}czkowski, W.M.: On regular periodic solutions to the Navier-Stokes equations. Case A.
\bibitem[BMN1]{BMN1} Babin, A.; Mahalov, A.; Nicolaenko, B.: Global regularity of 3D rotating Navier-Stokes equations for resonant domains, Appl. Math. Letters 13 (2000), 51--57.
\bibitem[BMN2]{BMN2} Babin, A.; Mahalov, A.; Nicolaenko, B.: Regularity and integrability of 3D Euler and Navier-Stokes equations for rotating fluids, Asympt. Analysis 15 (1997), 103--150.
\bibitem[BMN3]{BMN3} Babin, A.; Mahalov, A.; Nicolaenko, B.: Global regularity of 3D rotating Navier-Stokes equations for resonant domains, Indiana Univ. Math. J. 48 (1999), 1133--1176.
\bibitem[CKN]{CKN} Caffarelli, L.; Kohn, R.; Nirenberg, L.: Partial regularity of suitable weak solutions of the Navier-Stokes equations, Comm. Pure Appl. Math. 35 (1982), 771--831.
\bibitem[CF]{CF} Constantin, P.; Fefferman, C.: Directions of vorticity and the problem of global regularity for the Navier-Stokes equations, Indiana Univ. Math. J. 42 (3) (1993), 775--789.
\bibitem[ESS]{ESS} Escauriaza, L.; Seregin, G.A.; \v Sver\'ak, V.: $L_{3,\infty}$-solutions of the Navier-Stokes equations, Russian Math. Surveys, 58:2 (2003), 211--250.
\bibitem[F]{F} Fefferman, C.L.: Existence and Smoothness of the Navier-Stokes Equation, The Millennium Prize Problems, Clay Mathematics Institute, Cambridge, 57--67.
\bibitem[MS]{MS} Mikhailov, A.S.; Shilkin, T.N.: $L_{3,\infty}$-solutions to the 3d Navier-Stokes system in the domain with a curved boundary, Zap. Nauchn. Sem. POMI 336 (2006), 133--152.
\bibitem[MN]{MN} Mahalov, A.; Nicolaenko, B.: Global solvability of 3D Navier-Stokes equations with a strong initial rotation, Usp. Mat. Nauk 58, 2 (350) (2003), 79--110 (in Russian).
\bibitem[NZ]{NZ} Nowakowski, B.; Zaj\c{a}czkowski, W.M.: Global existence of solutions to Navier-Stokes equations in cylindrical domains, Appl. Math. 36 (2) (2009), 169--182.
\bibitem[RS1]{RS1} Raugel, G.; Sell, G.R.: Navier-Stokes equations on thin 3d domains. I: Global attractors and global regularity of solutions, J. AMS 6 (3) (1993), 503--568.
\bibitem[RS2]{RS2} Raugel, G.; Sell, G.R.: Navier-Stokes equations on thin 3d domains, II: Global regularity on spatially periodic solutions, Nonlinear Partial Differential Equations and Their Applications, Coll\'ege de France Seminar; Vol. 11, eds H. Brezis and J.L. Lions, Pitman Research Notes in Mathematics Series 299, Longman Scientific Technical, Essex UK (1994), 205--247.
\bibitem[RS3]{RS3} Raugel, G.; Sell, G.R.: Navier-Stokes equations on thin 3d domains. III: global and local attractors, Turbulence in Fluid  Flows: A dynamic System Approach, IMA Volumes in Mathematics and its Applications, Vol. 55, eds. G.R. Sell, C. Foias, R. Temam, Springer Verlag, New York 1993, 137--163.
\bibitem[S1]{S1} Seregin, G.A.: Estimates of suitable weak solutions to the Navier-Stokes equations in critical Morrey spaces, J. Math. Sc. 143 (2007), 2961--2968.
\bibitem[S2]{S2} Seregin, G.A.: Differential properties of weak solutions of the Navier-Stokes equations, Algebra and Analiz 14 (2002), 1--44 (in Russian).
\bibitem[S3]{S3} Seregin, G.A.: Local regularity of suitable weak solutions to the Navier-Stokes equations near the boundary.
\bibitem[S4]{S4} Seregin, G.A.: Lecture Notes on regularity theory for the Navier-Stokes equations, World Scientific Publishing Co. Pte. Ltd., Hackensack, NJ, 2015, x+258 pp. ISBN 978-981-4623-40-7.
\bibitem[SSS]{SSS} Seregin, G.A.; Shilkin, T.N.; Solonnikov, V.A.: Boundary partial regularity for the Navier-Stokes equations, Zap. Nauchn. Sem. POMI 310 (2004), 158--190.
\bibitem[SS1]{SS1} Seregin, G.A.; \v Sver\'ak, V.: Navier-Stokes equations with lower bounds on the pressure, ARMA 163 (2002), 65--86.
\bibitem[S]{S} Serrin, J.: On the interior regularity of weak solutions of the Navier-Stokes equations, ARMA 9 (1962), 187--195.
\bibitem[Z7]{Z7} Zaj\c{a}czkowski, W.M.: Stability of two-dimensional solutions to the Navier-Stokes equations in cylindrical domains under Navier boundary conditions, JMAA 444 (2016), 275--297.
\bibitem[Z8]{Z8} Zaj\c{a}czkowski, W.M.: Global special regular solutions to the Navier-Stokes equations in a cylindrical domain under boundary slip conditions, Gakuto International Series, Mathematical Sciences and Applications Vol. 21 (2004), pp. 188.
\bibitem[Z9]{Z9} Zaj\c{a}czkowski, W.M.: Global special regular solutions to the Navier-Stokes equations in axially symmetric domains under boundary slip conditions, Diss. Math. 432 (2005), pp. 138.
\bibitem[Z10]{Z10} Zaj\c{a}czkowski, W.M.: Global regular solutions to the Navier-Stokes equations in a cylinder, Banach Center Publications, Vol. 74 (2006), 235--255.
\bibitem[Z11]{Z11} Zaj\c{a}czkowski, W.M.: Global regular nonstationary flow for the Navier-Stokes equations in a cylindrical pipe, TMNA 26 (2005), 221--286.
\bibitem[Z12]{Z12} Zaj\c{a}czkowski, W.M.: On global regular solutions to the Navier-Stokes equations in cylindrical domains, TMNA 37 (2011), 55--85.
\bibitem[Z13]{Z13} Zaj\c{a}czkowski, W.M.: Some global regular solutions to Navier-Stokes equations, Math. Meth. Appl. Sc. 30 (2007), 123--151.
\end {thebibliography}
\end{document}